\documentclass[preprint,12pt,numbers]{elsarticle}
\usepackage{amsmath,amssymb,amsfonts,amsthm,mathrsfs}
\usepackage{enumitem}
\usepackage[english]{babel}
\usepackage[colorlinks,linkcolor=blue,citecolor=blue,urlcolor=blue]{hyperref}
\biboptions{sort&compress}
\theoremstyle{plain}
\newtheorem{theorem}{Theorem}[section]
\newtheorem{lemma}[theorem]{Lemma}
\newtheorem{proposition}[theorem]{Proposition}

\theoremstyle{definition}

\newtheorem{remark}[theorem]{Remark}
\newtheorem{assumption}[theorem]{Assumption}
\numberwithin{equation}{section}
\newcommand{\R}{\mathbb R}
\newcommand{\E}{\mathbb E}
\newcommand{\PP}{\mathbb P}
\newcommand{\dd}{\,\mathrm d}
\newcommand{\eps}{\varepsilon}
\allowdisplaybreaks[2]
\journal{Journal of Functional Analysis}
\hypersetup{pdftitle={Global well-posedness and scattering for energy-critical Schrodinger equations with conservative Marcus noise},pdfauthor={Bin Liu, Fan Xu, Bo Yang, Lei Zhang},pdfsubject={Research manuscript prepared for Journal of Functional Analysis}}

\begin{document}
\begin{frontmatter}
\title{Global well-posedness and scattering for energy-critical
Schr\"odinger equations with conservative Marcus noise}
\author[a,b]{Bin Liu}
\ead{binliu@mail.hust.edu.cn}
\author[a,b]{Fan Xu}
\ead{2026510746@hust.edu.cn}
\author[a,b]{Bo Yang}
\ead{boyang_0717@hust.edu.cn}
\author[a,b]{Lei Zhang}
\ead{lei_zhang@hust.edu.cn}
\affiliation[a]{organization={School of Mathematics and Statistics,
Huazhong University of Science and Technology},city={Wuhan},
postcode={430074},state={Hubei},country={P. R. China}}
\affiliation[b]{organization={Hubei Key Laboratory of Engineering
Modeling and Scientific Computing, Huazhong University of Science
and Technology},city={Wuhan},postcode={430074},state={Hubei},
country={P. R. China}}

\begin{abstract}
This work establishes global well-posedness for the three-dimensional
defocusing energy-critical nonlinear Schr\"odinger equation with
conservative linear Marcus noise. For arbitrary deterministic
energy-space data, we construct a probabilistically strong solution,
unique in the critical spacetime class, and prove convergence of
finite-activity approximations. The spatial profiles need only be
bounded and Lipschitz, while the centered pure-jump L\'evy noise is
assumed to have finite second moment. If the temporal noise coefficient
is square integrable on the half-line, the solution scatters almost
surely, with uniform mean-square tail convergence and no smallness
restriction. The solution conserves mass and satisfies an exact
energy balance. The proof combines deterministic energy-critical
stability with Poisson Strichartz estimates; convergence of the
interaction-representation martingale gives the critical spacetime
control needed for continuation and scattering.
\end{abstract}

\begin{keyword}
Energy-critical nonlinear Schr\"odinger equation \sep
Global well-posedness \sep Scattering \sep Marcus noise \sep
L\'evy process
\MSC[2020] 35Q55 \sep 35B40 \sep 60H15 \sep 60G51
\end{keyword}
\end{frontmatter}
\clearpage

\section{Introduction and main results}

We study critical spacetime estimates for conservative random phase
perturbations of the three-dimensional defocusing energy-critical
nonlinear Schr\"odinger equation (NLS)
\begin{equation}\label{det:eq}
 i\partial_t v+\Delta v=|v|^4v.
\end{equation}
The scaling $v(t,x)\mapsto\rho^{1/2}v(\rho^2t,\rho x)$ preserves
the homogeneous energy norm. Global continuation at this regularity
requires control of a critical spacetime norm whose local smallness
does not follow from an energy bound alone. Conservative random
phase impulses introduce a further difficulty: they preserve mass,
but the energy itself evolves with the noise. The problem is therefore
to combine deterministic critical stability with estimates for both
the random energy increments and the stochastic spacetime norm under
a common set of assumptions on the driving process.

The deterministic foundation is the large-data scattering theorem
for \eqref{det:eq}. The local critical theory originates in
\cite{CazenaveWeissler1990}, and the linear estimates used here
follow from the endpoint Strichartz theorem of Keel and Tao
\cite{KeelTao1998}. Radial global results were obtained by
Bourgain \cite{Bourgain1999} and Grillakis \cite{Grillakis2000};
Tao \cite{TaoRadial2005} proved radial scattering with
quantitative spacetime bounds in higher dimensions. The
nonradial three-dimensional theorem of Colliander, Keel,
Staffilani, Takaoka and Tao \cite{CKSTT2008} supplies the global
comparison solution and its critical spacetime bound. Our argument
builds on this theorem by controlling the perturbation produced by
the random phase flow. The corresponding
four- and higher-dimensional results are
\cite{RyckmanVisan2007,Visan2007}. Stability at energy criticality
is studied in \cite{TaoVisan2005}; for the three-dimensional
quintic equation, the differentiated polynomial estimates allow
a direct proof in the spaces used below. We provide that proof
with its dependence on the comparison bound, using the local
theory described in \cite{Cazenave2003,Tao2006}. The defocusing
sign supplies comparison bounds for arbitrary energy. In the
focusing problem, the theory of \cite{KenigMerle2006} instead
involves a threshold determined by the ground state.

The random phase flow is naturally associated with a real
time-dependent potential, which changes the local phase of a wave.
For a smooth signal $\ell=(\ell_1,\ldots,\ell_m)$, the perturbed
equation takes the form
\[
 i\partial_t u+\Delta u=|u|^4u+
 a(t)\sum_{j=1}^m V_j(x)u\,\dot\ell_j(t),
\]
where the real functions $V_j$ describe the spatial variation of
the perturbation. An impulse with increment $z$ acts by the flow
\[
 w(x)\longmapsto\exp\bigl(-ia(t)z\cdot V(x)\bigr)w(x).
\]
This flow preserves the pointwise modulus and is the jump rule
associated with the Marcus canonical integral
\cite{Marcus1981,KurtzPardouxProtter1995}. The potential energy and
the mass are consequently unchanged at a jump. The kinetic energy,
however, changes through the spatial gradient of the phase.

Replacing the smooth signal by a centered pure-jump L\'evy process
leads to
\begin{equation}\label{main:canonical}
 \left\{\begin{aligned}
 \mathrm du&=i(\Delta u-|u|^4u)\dd t
       -ia(t)\sum_{j=1}^m V_j u(t-)\diamond\mathrm dL_j(t),\\
 u(0)&=u_0\in H^1(\R^3;\mathbb C).
 \end{aligned}\right.
\end{equation}
Here $t\ge0$, $x\in\R^3$, and $\diamond$ denotes the Marcus
interpretation specified by the phase flow. The process $L$ has
finite second moment, the spatial profiles belong to
$W^{1,\infty}(\R^3;\R)$, and $a$ is a deterministic locally
square-integrable function. The precise compensated equation is
given below. Centering fixes the deterministic drift of the driving
process, which must be specified for the long-time problem.

Our main result constructs global, pathwise unique solutions of
\eqref{main:canonical} for arbitrary initial data in $H^1(\R^3)$,
the finite-mass part of the homogeneous energy space. The finite-mass
assumption enters the estimates for the spatially varying phase
multipliers. The same estimates give convergence of finite-activity
approximations on the prescribed stochastic basis. The jump measure may be
nonsymmetric and have unbounded support or infinite first absolute
moment near zero. The spatial profiles require no decay at infinity.
When $a$ is square integrable on the whole half-line, we also obtain
almost sure scattering and convergence in the mean-square norm of
the entire late-time error, without a smallness restriction on the
initial data or the noise intensity.

The main issue in coupling critical stability to the random forcing
is to retain the second-moment assumption throughout the construction.
The critical spacetime exponent is larger than two, and the energy
increment contains a term quadratic in the jump size. We estimate the
Poisson convolution in the second moment in probability, with values
in a spacetime Banach space. In the energy balance, we separate a
martingale term linear in the mark from an increasing process
quadratic in the mark.
Only the first moment of the latter process is needed. These two
estimates provide uniform control as the small-jump cutoff is
removed. Deterministic critical stability then supplies the
continuation argument.

The strong theory for jump-driven NLS was developed in
\cite{BrzezniakLiuZhu2021}, where stochastic Strichartz estimates
for Poisson convolutions yield global solutions with nonlinear
conservative Marcus coefficients throughout the strictly
mass-subcritical range, for both signs of the power nonlinearity.
Additive pure-jump forcing in the same range is treated in
\cite{WangZhaiZhu2023}. Energy-space martingale solutions for
whole-space equations with jump noise were constructed in
\cite{deBouardHausenblas2019}; pathwise uniqueness and strong
solutions were studied in \cite{deBouardHausenblasOndrejat2019}.
For conservative linear Marcus noise, energy-subcritical martingale
solutions on compact manifolds and bounded domains are obtained
in \cite{BrzezniakHornungManna2020}. These constructions use
different regularity, moment and compactness assumptions. The
energy-critical whole-space problem additionally requires control
of a spacetime norm whose local smallness does not follow from
an energy bound alone.

We use the Poisson Strichartz method of
\cite{BrzezniakLiuZhu2021} together with the second-moment
inequality in the Banach-space integration theory of
\cite{DirksenMaasVanNeerven2013}. The distinction between
probability moments and jump integrability is developed further
in \cite{Dirksen2014}, while maximal estimates and c\`adl\`ag
versions for stochastic convolutions are studied in
\cite{ZhuBrzezniakHausenblas2017}. Our approximations retain the
original Poisson random measure and converge by stability estimates.
This directly preserves the noise coordinate, whose treatment in
Skorokhod representation arguments is discussed in
\cite{OndrejatSeidler2025}.

For Gaussian forcing, critical stochastic NLS already has a
global theory. Oh and Okamoto \cite{OhOkamoto2020} prove global
well-posedness for additive noise at mass criticality and at
energy criticality in dimensions three through six. Their proof
combines stochastic mass or energy bounds with deterministic
perturbation theory. Zhang \cite{Zhang2023} establishes the
corresponding mass- and energy-critical theory for linear
multiplicative Wiener noise, together with rescaled scattering
and a support theorem. The unconditional energy-critical global
result covers dimensions three through six; higher dimensions
involve an additional a priori energy-space bound. The spatial
profiles are smooth, with asymptotic flatness assumptions on
their positive-order derivatives. The present work treats the
discontinuous phase flow directly at one spatial derivative and
with only a second moment of the jump measure.

The Gaussian theories build on the mass- and energy-space results
of de Bouard and Debussche
\cite{deBouardDebussche1999,deBouardDebussche2003}. Stochastic
Strichartz estimates lead to the compact-manifold theory in
\cite{BrzezniakMillet2014} and the whole-space theory in
\cite{Hornung2018}. The latter includes local mass-critical and
global mass-subcritical solutions with nonlinear Stratonovich
noise. A different construction uses stochastic rescaling to
obtain random dispersive equations with lower-order coefficients
\cite{BarbuRocknerZhang2016}; pathwise Strichartz and local
smoothing estimates for the resulting operators are developed
in \cite{Zhang2022}. In the one-dimensional mass-critical case,
global well-posedness and stability at $L^2$ regularity were
proved in \cite{FanXu2021}, and approximation of the driving
signal was treated in \cite{FanXuWongZakai2021}. The quintic
nonlinearity is also present there, but the three-dimensional
problem requires estimates for one spatial derivative of both
the nonlinearity and the noise.

Other stochastic results concern distinct critical settings.
The almost-conservation method in \cite{CheungLiOh2021} gives
global solutions below the energy space for the three-dimensional
cubic equation with additive Gaussian forcing. The
four-dimensional energy-critical problem with a nonzero background
is treated in \cite{CheungLi2022}, and the additively forced
quintic equation on the three-dimensional torus in
\cite{LiOkamotoTao2025}. Local energy-critical theories and
focusing blow-up are studied in \cite{MilletRoudenko2026}.
Regularization by sufficiently strong nonconservative noise at
criticality is established in \cite{SpitzZhangZhao2026}.
Equation~\eqref{main:canonical} instead preserves mass, and its
energy has a nonnegative expected increment. Its global
construction uses the defocusing deterministic dynamics and
the control of these random energy increments.

For long-time behavior, Herr, R\"ockner and Zhang
\cite{HerrRocknerZhang2019} developed scattering for
multiplicative Wiener equations under finite global quadratic
variation, including an energy-critical statement conditional
on global existence and the required Strichartz regularity.
The critical theory in \cite{Zhang2023} supplies global solutions
in the dimensions specified above. Persistent small noise is
treated for the three-dimensional mass-critical equation in
\cite{FanXuZhao2023}. Our scattering theorem concerns finite
total temporal noise intensity. Under this condition, the
interaction-representation martingale converges, and the
restarted Poisson convolution becomes small on late tails in
the critical solution norm. This gives free scattering for
the original jump-driven solution. The maximal energy estimate
also yields mean-square tail convergence and identifies the
expected kinetic energy of the scattering state.

\subsection{The model and the main statements}

Fix a positive integer $m$. Let
$(\Omega,\mathcal F,(\mathcal F_t)_{t\ge0},\PP)$ be a complete probability
space with a complete, right-continuous filtration, and let $\E$
denote expectation with respect to $\PP$.
Let $N$ be an $(\mathcal F_t)$-Poisson random measure on
$[0,\infty)\times Z$, where $Z=\R^m\setminus\{0\}$, with compensator
$\dd t\,\nu(\dd z)$, and write $\widetilde N=N-\dd t\,\nu$.
All stochastic integrals below use this fixed stochastic basis.

\begin{assumption}\label{ass:model}
The measure $\nu$ is a L\'evy measure satisfying
\begin{equation}\label{noise:moment}
 Q:=\int_Z |z|^2\nu(\dd z)<\infty.
\end{equation}
The spatial coefficients $V_1,\ldots,V_m$ are real members of
$W^{1,\infty}(\R^3)$. The temporal coefficient $a:[0,\infty)\to\R$ is a
deterministic Borel function, finite everywhere, with
$a\in L^2(0,T)$ for every finite $T$.
\end{assumption}

We choose the centered square-integrable L\'evy martingale
\begin{equation}\label{noise:centered}
 L(t)=\int_0^t\int_Z z\,\widetilde N(\dd s,\dd z).
\end{equation}
The integral is defined in $L^2(\Omega;\R^m)$; no first-moment condition
near zero is needed. Changing $a$ on a deterministic null set does not
change any solution. Put $S(t)=e^{it\Delta}$ and, for $z\in Z$, define
\begin{equation}\label{noise:maps}
 \begin{split}
 A_z(x)&=\sum_{j=1}^m z_jV_j(x),\qquad
 J_{t,z}f=e^{-ia(t)A_z}f,\\
 G_{t,z}f&=(e^{-ia(t)A_z}-1)f,\qquad
 R_{t,z}f=(e^{-ia(t)A_z}-1+ia(t)A_z)f,\\
 B(t)f&=\int_Z R_{t,z}f\,\nu(\dd z).
 \end{split}
\end{equation}
The integral defining $B(t)$ is a Bochner integral in $H^1(\R^3)$.
Equation~\eqref{main:canonical} means the mild identity
\begin{equation}\label{main:mild}
 \begin{split}
 u(t)={}&S(t)u_0-i\int_0^t S(t-s)|u(s)|^4u(s)\dd s\\
 &+\int_0^t\int_Z S(t-s)G_{s,z}u(s-)\,\widetilde N(\dd s,\dd z)
       +\int_0^t S(t-s)B(s)u(s)\dd s.
 \end{split}
\end{equation}
The two correction terms in this formula are part of the model, and
both are retained in every approximation.

An adapted mild solution means an adapted $H^1(\R^3)$-valued process
with right-continuous paths and left limits, locally belonging to the
spacetime class in \eqref{sol:class}, for which \eqref{main:mild}
holds in $H^1(\R^3)$ simultaneously at all times before its lifetime,
outside one null set. Pathwise uniqueness means that two such solutions
with the same initial datum and the same Poisson random measure agree
up to their common lifetime. The term probabilistically strong refers
to construction on the prescribed stochastic basis; it does not assert
that the Laplacian is an $H^1$-valued process.

The Poisson integral in this definition is initially a localized
Hilbert-space stochastic integral. In fact, Lemma~\ref{lem:maps} gives,
on every compact interval $[0,T]$ on which the path exists,
\begin{equation}\label{mild:local-integrability}
 \begin{split}
 \int_0^T\int_Z\|G_{s,z}u(s-)\|_{H^1(\R^3)}^2\nu(\dd z)\dd s
 &\le C_V A(T)\sup_{0\le s\le T}\|u(s)\|_{H^1(\R^3)}^2,\\
 \int_0^T\|B(s)u(s)\|_{H^1(\R^3)}\dd s
 &\le C_V A(T)\sup_{0\le s\le T}\|u(s)\|_{H^1(\R^3)}.
 \end{split}
\end{equation}
Here $A(T)$ and the coefficient constant $C_V$ are specified below.
Both right-hand sides are finite pathwise. Stopping the accumulated
quadratic integral therefore defines the Poisson term without an
advance moment assumption on the solution. Its integrand uses the
predictable left-limit process $u(s-)$, with $u(0-)=u_0$.
The nonlinear time integral is defined by dual Strichartz estimates,
as detailed in Lemmas~\ref{lem:strichartz} and \ref{lem:quintic}.

Define the mass and defocusing energy by
\begin{equation}\label{mass:energy}
 M(f)=\|f\|_{L^2(\R^3)}^2,\qquad
 E(f)=\frac12\|\nabla f\|_{L^2(\R^3)}^2
      +\frac16\|f\|_{L^6(\R^3)}^6.
\end{equation}
We shall use the constant
$b_1=(\sum_{j=1}^m\|\nabla V_j\|_{L^\infty(\R^3)}^2)^{1/2}$
and the deterministic quantity
\begin{equation}\label{clock:def}
 A(T)=Q\int_0^T a(s)^2\dd s.
\end{equation}

\begin{theorem}\label{thm:global}
Under Assumption~\ref{ass:model}, for every deterministic
$u_0\in H^1(\R^3)$ there is a global adapted solution of
\eqref{main:mild}. For every finite $T$, almost surely,
\begin{equation}\label{sol:class}
 u\in D([0,T];H^1(\R^3))
       \cap L^{10}(0,T;W^{1,30/13}(\R^3)).
\end{equation}
It is pathwise unique among adapted mild solutions in this class.
Moreover, almost surely $M(u(t))=M(u_0)$ for all $t\ge0$, and
\begin{equation}\label{energy:bound:intro}
 \E\sup_{0\le t\le T}E(u(t))
       \le C\bigl(E(u_0)+b_1^2M(u_0)A(T)\bigr),
\end{equation}
where $C$ is an absolute constant.

Let $\eps_n\downarrow0$, and let $u_n$ solve \eqref{main:mild} with
all mark integrals restricted to $Z_n=\{|z|>\eps_n\}$. Then
\begin{equation}\label{approx:intro}
 \sup_{0\le t\le T}\|u_n(t)-u(t)\|_{H^1(\R^3)}
 +\|u_n-u\|_{L^{10}(0,T;W^{1,30/13}(\R^3))}
 \longrightarrow0
\end{equation}
in probability. The same convergence holds for solutions on the same
stochastic basis when deterministic initial data converge in $H^1(\R^3)$.
\end{theorem}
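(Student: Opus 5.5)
The plan is to first solve the truncated problems, with marks restricted to $Z_n=\{|z|>\eps_n\}$, and obtain $u$ as their limit. Because $\nu(Z_n)<\infty$, the Poisson measure restricted to $Z_n$ has finitely many atoms on $[0,T]$, occurring at stopping times $\tau_1<\tau_2<\cdots$. The compensated equation can then be read as a chain. Between jumps one solves the deterministic equation
\[
 i\partial_t v+\Delta v=|v|^4v+i\Bigl(B_n(t)-\int_{Z_n}G_{t,z}\,\nu(\dd z)\Bigr)v,
\]
which is defocusing quintic NLS perturbed by a bounded, time-dependent linear term. At each $\tau_k$ the state is multiplied by the phase $J_{\tau_k,z_k}$.

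Between jumps, global existence follows from the \cite{CKSTT2008} comparison solution combined with the energy-critical stability lemma. Using Lemma~\ref{lem:maps}, the linear drift has small norm in $L^1_tH^1_x$ on short intervals, so one can iterate over intervals whose number depends only on the energy. Mass conservation is exact: the phase flow preserves modulus, and the linear drift $\int_{Z_n}(-iaA_z)\,\nu$ is skew. This produces a pathwise global, adapted, càdlàg solution $u_n$ of the truncated mild equation. Pathwise uniqueness in the class \eqref{sol:class} follows in the same step, by the usual Strichartz/Gronwall argument on short intervals where the $L^{10}W^{1,30/13}$ norm is small.

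The second step is a uniform energy bound. Apply Itô's formula for jump processes to $E(u_n)$. Since $|J f|=|f|$, a jump changes only the kinetic energy, with
\[
 \|\nabla(Jf)\|^2-\|\nabla f\|^2=2a\,\mathrm{Im}\!\int \bar f\,\nabla f\cdot\nabla A_z+a^2\|f\nabla A_z\|^2 .
\]
Compensating, $E(u_n(t))$ equals $E(u_0)$ plus a martingale that is linear in $z$, plus an increasing process bounded by $\tfrac12 b_1^2M(u_0)\int_0^t a^2\int_{Z_n}|z|^2\,\nu$. The deterministic drift terms cancel against $B_n$; this cancellation is the point of the Marcus form. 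The martingale is handled by Burkholder–Davis–Gundy. Its quadratic variation is bounded by $b_1^2 M\,A\,\sup\|\nabla u_n\|^2$, and Young's inequality absorbs this into the left side. The result is \eqref{energy:bound:intro} uniformly in $n$, with $\E\sup E$ in place of $E$, and $A(T)$ is independent of $n$.

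The third step, and the expected main obstacle, is the Cauchy property in $n$ in the critical norm. Write $u_n-u_k$ in mild form. The difference consists of a Poisson convolution over the marks in $Z_k\setminus Z_n$, the $B$-difference, and the Lipschitz differences of $G$ and $R$. The Poisson Strichartz estimate (via \cite{DirksenMaasVanNeerven2013}, second moment with values in $L^{10}W^{1,30/13}$) bounds the small-jump convolution, stopped at the time $\sup E\le R$, by
\[
 C_V\, R\,\Bigl(\int_{\eps_k<|z|\le\eps_n}|z|^2\nu\Bigr)\int_0^Ta^2 ,
\]
which tends to $0$ as $n,k\to\infty$. Deterministic stability cannot be applied directly here, because the forcing is random and the time intervals interleave with jumps. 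I would therefore treat $u_n$ on a stopped interval as an approximate solution of the $u_k$ equation. The error would be measured in the dual Strichartz norm for the drift term and in the Strichartz norm for the stochastic convolution term. This requires a stochastic version of the stability lemma: on intervals where the $L^{10}$ norm of $u_n$ is at most $\eta$, the difference satisfies a linear estimate. The difference in the $G$ terms, $G_{s,z}(u_n-u_k)$, has small $L^2_\omega$ norm on short intervals, and a stochastic Gronwall lemma controls it. The number of such intervals is controlled by the critical spacetime bound for $u_n$. That bound in turn comes from the comparison with \cite{CKSTT2008} on stochastic subintervals, and it is uniform in $n$ on the event $\{\sup E\le R\}$, whose probability is close to one by Chebyshev's inequality and step two.

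Combining these gives convergence in probability. The limit $u$ solves \eqref{main:mild}: pass to the limit in each term using \eqref{mild:local-integrability} and the Itô isometry. Mass conservation, the energy bound (by Fatou's lemma), and the class \eqref{sol:class} are inherited from the approximations. The final claim, continuous dependence on initial data converging in $H^1$, follows from the same stability estimate with an extra initial-datum term $\|S(t)(u_0^{(1)}-u_0^{(2)})\|$.
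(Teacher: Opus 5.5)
Your first two steps are sound and essentially match the paper's Propositions~\ref{prop:finite} and \ref{prop:finiteenergy}. One small correction: the increasing $D$-process is bounded only in expectation, via compensation, and that is all the absorption argument needs.

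\textbf{The gap is in your third step, at exactly the point you flag as the main obstacle.} Both the count of stability intervals and your stochastic Gronwall argument require $\|u_n\|_{Y(0,T)}$ to be tight in $n$. You assert that this tightness follows from comparison with the deterministic global solution on stochastic subintervals, on the event $\{\sup E\le R\}$. The energy bound does control the comparison solution, through Lemma~\ref{lem:detglobal}. But Lemma~\ref{lem:stability} also needs the restarted stochastic forcing $\Xi$ to satisfy $\|\Xi\|_{\mathcal X(I)}\le\delta_*(K)$ \emph{pathwise} on every comparison interval. The Poisson Strichartz estimate gives only $\E\|\Xi\|_{\mathcal X(I)}^2\le C_VK^2Q\int_Ia(r)^2\dd r$, after stopping at $H^1$ level $K$. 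If you cut $[0,T]$ into intervals of clock increment $\delta$ and sum Chebyshev bounds, the failure probability is of order $C_VK^2A(T)/\delta_*(K)^2$. This is independent of $\delta$, so it is not small for large data or strong noise, and with only $Q<\infty$ there is no higher moment to improve it. Nothing else in your outline yields pathwise smallness of the forcing uniformly in $n$. So neither the Cauchy property in $\mathcal X([0,T])$ nor the membership of the limit in \eqref{sol:class} is established.

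\textbf{The paper never needs a uniform critical bound for $u_n$.} It constructs the full-noise solution directly, by a contraction in $L^2(\Omega;\mathcal X(I))$ with the cutoff nonlinearity $\chi(\eta^{-1}\|u\|_{Y([s,t])})F(u)$ on intervals of small noise clock. This gives a maximal solution with $\{\tau^*<\infty\}\subset\{\|u\|_{Y([0,\tau^*))}=\infty\}$. Convergence $u_n\to u$ is proved only up to stopping times $\theta_j<\tau^*$ defined by the critical norm of the limit $u$. That suffices to transfer your uniform energy bound to $u$ before $\tau^*$.

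Blow-up is then excluded pathwise. Pasting predictable integrands gives an $H^1$ martingale $P(t)=\int_0^t\int_ZS(-s)\mathbf 1_{\{s<\tau^*\}}G_{s,z}u(s-)\widetilde N(\dd s,\dd z)$ on a deterministic $[0,T]$, with $S(\cdot)P(\cdot)\in Y(0,T)$. Hence $P(\tau^*-)$ exists, and the restarted forcing tends to zero in $\mathcal X([s,\tau^*))$ as $s\uparrow\tau^*$. A single stability comparison from $u(s)$ then contradicts the blow-up alternative. Convergence on all of $[0,T]$ follows from $\PP(\theta_J<T)\to0$.

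\textbf{Separately, your uniqueness argument covers only the finite-activity equations.} Those are deterministic between jumps. For the infinite-activity compensated integral no pathwise Gronwall argument is available. Uniqueness in \eqref{sol:class} requires a stopped second-moment contraction, stopping when $H^1$ norms exceed $K$ or critical norms reach $\eta$, as in Proposition~\ref{prop:local}.
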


\begin{remark}
Theorem~\ref{thm:global} establishes a strong energy-space theory
at the critical power for spatially varying linear Marcus noise.
The mass-space result in \cite{BrzezniakLiuZhu2021} allows nonlinear
Marcus coefficients and treats strictly mass-subcritical powers;
the additive theory in \cite{WangZhaiZhu2023} has the same restriction
on the power. The present linear phase structure permits one
spatial derivative and an exact kinetic-energy calculation under
\eqref{noise:moment}. The resulting uniform energy estimate is
combined with \cite{CKSTT2008} to control the critical spacetime
norm. Compared with the energy-subcritical constructions in
\cite{deBouardHausenblas2019,BrzezniakHornungManna2020}, the
convergence argument here takes place on the original stochastic
basis and identifies the limit through pathwise uniqueness in
\eqref{sol:class}.

The Gaussian energy-critical theories
\cite{OhOkamoto2020,Zhang2023} already establish the corresponding
critical well-posedness question for additive and linear multiplicative
Wiener forcing under their respective assumptions. Our conclusion
concerns a different noise class: the Marcus phase map is treated
directly, requiring only bounded Lipschitz spatial profiles and a
second moment of the centered jump measure. No spatial decay, jump
symmetry, bounded jump support or fourth jump moment is imposed.
The contribution is the critical global construction under these
spatial and moment assumptions, together with stability of its
finite-activity approximations. The deterministic comparison theorem
acts in the full homogeneous energy space, whereas our phase-multiplier
and energy estimates also use the conserved mass. The stochastic
result therefore concerns the finite-mass energy space $H^1(\R^3)$.
\end{remark}

For later use, define the current
$j(f)=\operatorname{Im}(\overline f\,\nabla f)$ and the real quantities
\begin{equation}\label{energy:increments}
 \Lambda_{t,z}(f)=-a(t)\int_{\R^3}\nabla A_z\cdot j(f)\dd x,
 \qquad
 D_{t,z}(f)=\frac{a(t)^2}{2}
              \|f\nabla A_z\|_{L^2(\R^3)}^2.
\end{equation}

\begin{proposition}\label{prop:balance}
The solution of Theorem~\ref{thm:global} satisfies, indistinguishably,
\begin{equation}\label{energy:balance:intro}
 \begin{split}
 E(u(t))={}&E(u_0)
 +\int_0^t\int_Z\Lambda_{s,z}(u(s-))\,\widetilde N(\dd s,\dd z)
 \\
 &+\int_0^t\int_ZD_{s,z}(u(s-))\,N(\dd s,\dd z).
 \end{split}
\end{equation}
The first integral is a square-integrable real martingale on each
bounded interval. The second is nonnegative and integrable.
\end{proposition}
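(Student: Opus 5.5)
The energy balance \eqref{energy:balance:intro} will be proved first for the finite-activity approximations $u_n$ of Theorem~\ref{thm:global} and then passed to the limit $n\to\infty$. For $u_n$ the marks live in $Z_n=\{|z|>\eps_n\}$, where $\nu(Z_n)<\infty$. The Poisson integral therefore splits into a compound-Poisson sum minus a compensator, and the compensator combines with $B$ into a bounded linear drift. Between consecutive jump times, $u_n$ solves the deterministic quintic NLS perturbed by the linear term
\[
 -\int_{Z_n}\bigl(G_{s,z}-R_{s,z}\bigr)\,\nu(\dd z)
 =-\int_{Z_n} i a(s)A_z\,\nu(\dd z),
\]
which is multiplication by $-ia(s)\bar A_n(x)$ with the real potential $\bar A_n=\int_{Z_n}A_z\,\nu(\dd z)$. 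This potential lies in $W^{1,\infty}$ because $\nu(Z_n)<\infty$ and $Q<\infty$.

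Between jumps, the energy identity for NLS with a real potential $a\bar A_n$ is standard. I would justify it by regularization: mollify the data, use persistence of regularity of the deterministic flow in the critical class \eqref{sol:class}, and conclude by stability. The potential term in $E$ has zero derivative along the phase, and the kinetic term gives
\[
 \frac{\dd}{\dd t}E
 =-a(t)\int_{\R^3}\nabla\bar A_n\cdot j(u)\,\dd x
 =\int_{Z_n}\Lambda_{t,z}(u)\,\nu(\dd z).
\]
At a jump $(s,z)$ the map $J_{s,z}$ leaves $|u|$ unchanged. Using $\nabla(e^{-iaA_z}f)=e^{-iaA_z}(\nabla f-iaf\nabla A_z)$, the jump in energy is exactly
\[
 E(J_{s,z}f)-E(f)=\Lambda_{s,z}(f)+D_{s,z}(f).
\]
Summing over jumps and adding the drift, the compensated part is the $\widetilde N$-integral of $\Lambda$ and the remaining part is the $N$-integral of $D$. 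This gives \eqref{energy:balance:intro} for $u_n$ with marks restricted to $Z_n$. The mass identity $M(u_n(t))=M(u_0)$ follows in the same way, since $\int \nabla\bar A_n\cdot j\,\dd x$ is absent from the mass derivative and the jumps preserve modulus.

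Next come the uniform bounds. Since $|\Lambda_{s,z}(f)|\le |a(s)||z|b_1\|f\|_{L^2}\|\nabla f\|_{L^2}$ and $D_{s,z}(f)\le \frac12 a(s)^2|z|^2b_1^2M(f)$, mass conservation gives
\[
 \E\int_0^T\!\!\int_Z D\,N(\dd s,\dd z)\le \tfrac12 b_1^2M(u_0)A(T).
\]
The martingale term is handled by Burkholder--Davis--Gundy and Young: its supremum is bounded by $\frac12\E\sup E+Cb_1^2M(u_0)A(T)$. This is \eqref{energy:bound:intro}, uniformly in $n$. A localization by stopping times is needed to make the absorption legitimate.

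Finally, pass to the limit using \eqref{approx:intro}. For fixed $t$, $E(u_n(t))\to E(u(t))$ in probability, since $H^1\subset L^6$. For the martingale terms, the difference of the $\Lambda$-integrands, including the tail $\mathbf 1_{|z|\le\eps_n}$, is controlled in $L^2(\nu\otimes\dd s)$ by $|a||z|b_1$ times $\|u_n-u\|_{L^2}\|\nabla u\|+\|u_n\|\,\|\nabla(u_n-u)\|$ plus $Q$-tails. After localizing with stopping times at which $\sup\|u\|_{H^1}$ and $\sup\|u_n\|_{H^1}$ are bounded, the BDG inequality gives convergence in probability uniformly on $[0,T]$. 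The $D$-integrals converge in $L^1$ by the same type of estimate. Right-continuity then upgrades convergence at each time to indistinguishability.

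The main obstacle is the rigorous derivation of the energy identity between jumps at critical regularity. Only $H^1\cap L^{10}W^{1,30/13}$ is available, so $\partial_t u$ is not in $H^1$. I would first try smoothing the data in $H^2$: at finite activity the linear potential is smooth enough in $W^{1,\infty}$ only, so propagation of $H^2$ may fail. The fallback is a Friedrichs-type frequency truncation $P_{\le N}$ applied to the equation, with commutator $[P_{\le N},\bar A_n]$ bounded on $H^1$ via the Lipschitz bound, followed by $N\to\infty$ using the Strichartz control of the quintic term in the dual space.
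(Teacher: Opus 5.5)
Your overall plan matches the paper's. You prove the finite-activity identity from the potential lemma and the exact jump identity (Proposition~\ref{prop:finiteenergy}), obtain a uniform maximal bound by stopping, a martingale maximal inequality and absorption, and then pass to the limit under a common $H^1$ localization. The term linear in $z$ is handled in $L^2(\Omega)$ and the term quadratic in $z$ in $L^1(\Omega)$ by compensation.

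One computation is wrong as written, and it is exactly the one that produces the compensation. Since $G_{s,z}-R_{s,z}=-ia(s)A_z$,
\[
 -\int_{Z_n}(G_{s,z}-R_{s,z})\,\nu(\dd z)=+ia(s)\bar A_n,
\]
not $-ia(s)\bar A_n$ (your $\bar A_n$ is the paper's $C_n$). With your sign the potential is $-a\bar A_n$, and you get $\frac{\mathrm d}{\mathrm dt}E=+\int_{Z_n}\Lambda_{t,z}(u_n)\,\nu(\dd z)$. Adding the jump increments $\Lambda+D$ then integrates $\Lambda$ against $N(\dd s,\dd z)+\nu(\dd z)\dd s$ rather than $\widetilde N$, so \eqref{energy:balance:intro} does not follow. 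With the correct potential $a\bar A_n$, \eqref{potential:energy} gives
\[
 \frac{\mathrm d}{\mathrm dt}E(u_n)
 =a\int_{\R^3}\nabla\bar A_n\cdot j(u_n)\,\dd x
 =-\int_{Z_n}\Lambda_{t,z}(u_n)\,\nu(\dd z).
\]
This minus sign is precisely what turns the jump sum of $\Lambda$ into the $\widetilde N$-integral. After this correction the rest of your argument goes through as in the paper: the uniform bound, the localized limit passage in $L^2(\Omega)$ and $L^1(\Omega)$, and the upgrade to indistinguishability by right-continuity.

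For the between-jump identity, you rightly note that a merely Lipschitz potential does not propagate $H^2$. The paper (Lemma~\ref{lem:potential}) handles this in three steps:
\begin{enumerate}
\item it mollifies the potential as well as the data;
\item it invokes persistence of regularity for the smooth problem;
\item it recovers the $H^1$ solution by a local comparison, using strong convergence of the multipliers on the compact $H^1$ trajectory.
\end{enumerate}
Your Friedrichs fallback is a valid and more self-contained alternative. It needs neither persistence of regularity nor the commutator bound you mention.

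Take the between-jump solution $w$ and set $w_N=P_{\le N}w$. The only delicate term in $\frac{\mathrm d}{\mathrm dt}E(w_N)$ is the defect in the Hamiltonian cancellation between the kinetic and quintic parts. After one integration by parts, its time integral over $I$ is bounded by a constant times $\|\nabla w_N\|_{L^2(I;L^6(\R^3))}\|F(w_N)-P_{\le N}F(w)\|_{\mathcal N(I)}$, which tends to zero by \eqref{F:diff}. The potential term converges directly, since $P_{\le N}(a\bar A_nw)\to a\bar A_nw$ in $L^1(I;H^1(\R^3))$. The only extra input is the endpoint bound $\nabla w\in L^2(I;L^6(\R^3))$ for the solution itself, which follows from Keel--Tao exactly as in Lemma~\ref{lem:strichartz}.
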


\begin{remark}
The decomposition in Proposition~\ref{prop:balance} separates the
two orders of the energy increment in the jump mark. The linear
part is a square-integrable martingale after the energy estimate,
and the quadratic part defines an integrable increasing process.
Its expectation uses precisely the second jump moment in
\eqref{noise:moment}. Treating the compensated quadratic part by
a square-integrable martingale estimate would instead introduce
a fourth jump moment. Keeping it against $N$ preserves the
second-moment hypothesis in both the convolution estimate and the
energy balance. This use of distinct moment bounds is consistent
with \cite{DirksenMaasVanNeerven2013,Dirksen2014}. The increasing
term also gives the expected energy supplied by the spatially
varying phase impulses.
\end{remark}

\begin{theorem}\label{thm:scattering}
Assume in addition that $a\in L^2(0,\infty)$. There is an
$H^1(\R^3)$-valued random variable $u_+$ such that, almost surely,
\begin{equation}\label{scatter:intro}
 \|u\|_{L^{10}(0,\infty;W^{1,30/13}(\R^3))}<\infty,
 \qquad
 \lim_{t\to\infty}\|u(t)-S(t)u_+\|_{H^1(\R^3)}=0.
\end{equation}
Furthermore, $u_+\in L^2(\Omega;H^1(\R^3))$, $M(u_+)=M(u_0)$ almost
surely, and
\begin{equation}\label{scatter:mean}
 \lim_{T\to\infty}\E\sup_{t\ge T}
           \|u(t)-S(t)u_+\|_{H^1(\R^3)}^2=0.
\end{equation}
The asymptotic kinetic energy obeys the identity
\begin{equation}\label{scatter:energy}
 \frac12\E\|\nabla u_+\|_{L^2(\R^3)}^2
 =E(u_0)+\frac12\E\int_0^\infty a(s)^2
       \int_Z\|u(s)\nabla A_z\|_{L^2(\R^3)}^2\nu(\dd z)\dd s.
\end{equation}
\end{theorem}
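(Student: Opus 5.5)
We work in the interaction representation $v(t)=S(-t)u(t)$. By \eqref{main:mild},
\[
 v(t)=u_0-i\int_0^tS(-s)|u|^4u\dd s+M(t)+\int_0^tS(-s)B(s)u(s)\dd s,
\]
where $M(t)=\int_0^t\int_ZS(-s)G_{s,z}u(s-)\,\widetilde N(\dd s,\dd z)$ is an $H^1$-valued martingale.

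\textbf{Step 1: the noise terms converge.} Mass is conserved, and \eqref{energy:bound:intro} holds with $A(T)\le A(\infty)<\infty$. Hence $\E\sup_{t\ge0}\|u(t)\|_{H^1}^2$ is bounded uniformly by a constant $K$ depending on $E(u_0)$, $M(u_0)$, $b_1$ and $A(\infty)$.

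The bounds \eqref{mild:local-integrability} with $T=\infty$ then give two facts. First, the martingale $M$ is bounded in $L^2(\Omega;H^1)$, with $\E\sup_{t\ge T}\|M(t)-M(T)\|_{H^1}^2\lesssim C_VK(A(\infty)-A(T))\to0$ by Doob's inequality. Second, the drift $\int_T^\infty\|B(s)u(s)\|_{H^1}\dd s$ tends to zero in $L^2(\Omega)$.

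To convert these into spacetime smallness, we apply the Poisson Strichartz estimate (the lemma used in the proof of Theorem~\ref{thm:global}) to the restarted convolution $\int_T^t\int_ZS(t-s)G_{s,z}u(s-)\widetilde N$. Its $L^{10}(T,\infty;W^{1,30/13})$ norm has second moment $\lesssim K(A(\infty)-A(T))$. The same bound holds for the $B$-drift term by the ordinary Strichartz inequality. Along a sequence $T_k\to\infty$ chosen with summable bounds, Borel–Cantelli makes both tails almost surely smaller than any prescribed $\eta$ for large $k$.

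\textbf{Step 2: global spacetime bound.} This is the main obstacle, and we treat it by deterministic stability. Fix $\omega$ and a large $T_k$. On $[T_k,\infty)$, $u$ solves the quintic NLS with initial datum $u(T_k)$ and an error $e$: the sum of the two noise tails from Step 1, which is small in the critical Strichartz norm.

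Let $w$ be the global solution of \eqref{det:eq} with $w(T_k)=u(T_k)$. By \cite{CKSTT2008}, $\|w\|_{L^{10}W^{1,30/13}}\le C(\sup_t E(u(t)))$, and this supremum is finite almost surely by Step 1. The stability lemma (the energy-critical perturbation result proved in the paper, whose smallness threshold depends only on this comparison bound) applies once $\eta$ is below the threshold $\eta_0(C(\sup E))$. It gives $\|u\|_{L^{10}(T_k,\infty;W^{1,30/13})}<\infty$.

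The subtlety is that the threshold is random. We handle it by choosing $k=k(\omega)$ large enough after fixing $\omega$, which the almost sure tail convergence permits. Combining with the finite bound on $[0,T_k]$ from \eqref{sol:class} gives the first part of \eqref{scatter:intro}.

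\textbf{Step 3: scattering and its properties.} Once the global $L^{10}W^{1,30/13}$ norm is finite, the dual Strichartz estimate and Lemma~\ref{lem:quintic} show that $\int_0^tS(-s)|u|^4u\dd s$ is Cauchy in $H^1$. Together with Step 1, $v(t)$ converges almost surely in $H^1$. We define $u_+=\lim_{t\to\infty} v(t)$; since $S(t)$ is unitary on $H^1$, this yields the second part of \eqref{scatter:intro}.

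Mass is preserved because $\|u_+\|_{L^2}=\lim_{t\to\infty}\|u(t)\|_{L^2}$. The bound $u_+\in L^2(\Omega;H^1)$ follows from Fatou's lemma and the uniform bound $K$.

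For \eqref{scatter:mean}, note that $\sup_{t\ge T}\|u(t)-S(t)u_+\|_{H^1}\le 2\sup_{t\ge T}\|v(t)-u_+\|_{H^1}$ is dominated by $2\sup_t\|v(t)\|_{H^1}+2\|u_+\|_{H^1}$. This dominating variable is square integrable, because $\|v(t)\|_{H^1}=\|u(t)\|_{H^1}$ and $\E\sup_t\|u\|_{H^1}^2\le K$. Dominated convergence then gives \eqref{scatter:mean}.

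For \eqref{scatter:energy}, take expectations in \eqref{energy:balance:intro}. The martingale term vanishes and the compensator of $N$ gives
\[
 \E E(u(t))=E(u_0)+\tfrac12\E\int_0^ta^2\int_Z\|u\nabla A_z\|_{L^2}^2\nu(\dd z)\dd s .
\]
We then let $t\to\infty$. The potential energy satisfies $\|u(t)\|_{L^6}\le\|S(t)u_+\|_{L^6}+C\|u(t)-S(t)u_+\|_{H^1}$. Dispersive decay of $S(t)u_+$, together with \eqref{scatter:mean}, sends $\E\|u(t)\|_{L^6}^6$ to $0$; this uses the uniform sixth moment $\E\sup_t E(u(t))<\infty$ for uniform integrability. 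The kinetic term converges in $L^1$ to $\frac12\E\|\nabla u_+\|_{L^2}^2$ by \eqref{scatter:mean}. The right-hand side converges by monotone convergence.
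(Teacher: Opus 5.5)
Your argument is correct. It follows the paper's overall architecture: the uniform energy bound on the half-line, a restart at a late time with comparison to the global deterministic solution via Lemma~\ref{lem:stability}, dual Strichartz for the nonlinear tail, dominated convergence for \eqref{scatter:mean}, and $L^6$ decay plus domination by $\sup_tE(u(t))$ for \eqref{scatter:energy}.

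The one genuine difference is how you obtain a small restarted forcing at a path-dependent time.

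\emph{The paper's route.} It proves $\|\Xi_T\|_{\mathcal X([T,\infty))}\to0$ for \emph{all} $T\to\infty$ on a single full-probability event. This uses three ingredients: almost sure convergence of the interaction martingale $\mathcal M$, almost sure finiteness of $\|S(\cdot)\mathcal M(\cdot)\|_{Y(0,\infty)}$, and the deterministic Lemma~\ref{scat:anchored-tail}. The forcing $\Xi_T$ is defined pathwise by the first line of \eqref{scat:restarted-noise}.

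\emph{Your route.} You use only the mean-square bound at deterministic times, $\E\|\Xi_T\|^2_{\mathcal X([T,\infty))}\lesssim K(A_T+A_T^2)$, which the paper also records as \eqref{scat:noise-tail-quantitative}. You then pass to almost sure convergence along a deterministic sequence $T_k$ with summable bounds. Choosing $k=k(\omega)$ afterwards is legitimate, because the random threshold $\delta_*(K(\omega))$ is fixed once $\omega$ is fixed. The restarted mild identity is then needed only at countably many deterministic times, where the stochastic-integral representation holds almost surely.

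\emph{Comparison.} Your route is slightly more economical: it needs no anchored-tail lemma and no definition of the restarted forcing at random times. The paper's route gives the cleaner statement that every sufficiently late time works pathwise, and it reuses the martingale convergence that also defines $u_+$.

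Two small points on your write-up:
\begin{itemize}
\item Lemma~\ref{lem:stability} needs the full $\mathcal X$ norm of the forcing, including the $H^1$ supremum; your Doob bound supplies this. The lemma should be applied on $[T_k,R]$ with $R$-independent constants, and only then should you let $R\to\infty$.
\item For the $L^6$ term, the integrability you actually use is $\E\sup_tE(u(t))<\infty$, which dominates $\|u(t)\|_{L^6}^6$. It is not a sixth moment of the $H^1$ norm.
\end{itemize}
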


\begin{remark}
Theorem~\ref{thm:scattering} has no smallness assumption on
$\|a\|_{L^2(0,\infty)}$, on the initial data, or on the spatial profiles.
Finite total noise intensity is closely related to the long-time regime
in \cite{HerrRocknerZhang2019,Zhang2023}. Here the second-moment
condition gives convergence of the interaction-representation
martingale and makes the remaining Poisson convolution small in the
critical solution space. This yields scattering for the original
jump-driven solution with unbounded jump sizes and one bounded
spatial derivative. For conservative Gaussian models, a tail gauge
converging to the identity can relate the rescaled and original
scattering formulations; the distinction here lies in the noise and
coefficient assumptions.

The persistent-noise result in \cite{FanXuZhao2023} addresses a
different balance between dispersion and forcing: it uses a small
noise coefficient and the three-dimensional mass-critical power.
Our finite-intensity hypothesis permits an arbitrary amount of
forcing on an initial interval. The mean-square tail convergence
and the identity \eqref{scatter:energy} additionally identify the
energy carried by the limiting free state in terms of the energy
injected by the spatially varying jumps.
\end{remark}

\subsection{Proof strategy and notation}

The proof couples stochastic moment estimates with pathwise critical
comparison. The defocusing energy and conserved mass control the
$H^1$ norm. The deterministic large-data theorem bounds the critical
spacetime norm of a comparison solution. The Poisson estimates must
then make the difference between these two evolutions small on the intervals where
comparison is used. The first bound alone does not rule out divergence
of the critical spacetime norm at a finite lifetime.

For the local construction, we cut off the accumulated critical
spacetime norm. The cutoff must be included in the difference
estimate: two paths generally reach the same cutoff level at
different times. After this estimate is proved, the deterministic
Duhamel term has a small Lipschitz constant when the cutoff level is
small. The stochastic term is controlled on intervals with small
quadratic noise intensity. Linearity of the phase coefficient gives
the global $H^1$ Lipschitz bound for $G_{t,z}=J_{t,z}-I$, with size
proportional to $|a(t)||z|$. Its second-order remainder has size
$a(t)^2|z|^2$, so its compensator is integrable under
\eqref{noise:moment}. This separates the critical nonlinear
smallness from the smallness used for stochastic contraction.
It also specifies which estimates survive removal of the small
jumps.

The stochastic spacetime estimate is based on the Poisson
Strichartz method of \cite{BrzezniakLiuZhu2021}. One first regards
the retarded free evolution of an integrand as a random variable
with values in the spacetime Banach space. The deterministic
homogeneous estimate bounds its norm, and a martingale-type-two
inequality controls its second moment. This argument uses the
Banach-space Poisson integration framework in
\cite{DirksenMaasVanNeerven2013}; the $H^1$ path supremum is
controlled separately by the Hilbert-space martingale inequality.
The spatial and temporal exponents in the Strichartz norm therefore
do not impose corresponding higher moments on the jump measure.

The energy identity is justified first for finite-activity
approximations. Between jumps the compensator contributes a real
time-dependent potential, whose energy derivative must be retained.
At a jump the phase map preserves the potential energy and changes
the kinetic energy by an explicit linear term and an explicit
quadratic term. Combining the continuous potential contribution
with the compensator of the linear jump term gives the balance in
Proposition~\ref{prop:balance}. Keeping the quadratic term against
$N$ makes its expected total variation depend only on the second
jump moment. A martingale maximal inequality and absorption then
give a bound uniform in the finite-activity cutoff. Local convergence
first transfers this bound to the maximal solution. After global
existence has been established, the exact identity is passed to the
limit using common stopping times that bound the $H^1$ norms of the
approximations and the solution. The two limit passages are carried
out in that order.

To continue this solution, stop the integrand before a hypothetical
finite lifetime and conjugate the stochastic convolution by the
free group. The resulting $H^1$-valued martingale has a left limit.
The Poisson Strichartz estimate also places its free evolution in
the critical spacetime class. When the equation is restarted near
the lifetime, the contribution of the noise accumulated before
the restart is subtracted. The remainder is small both in the
$H^1$ path supremum and in the critical spacetime norm. The latter
assertion uses the terminal martingale value and continuity of the
deterministic homogeneous solution operator. The compensator term
is small by absolute continuity of its time integral.

We can now compare with the global deterministic solution having
the same datum at the restart. The uniform energy bound gives a
uniform bound for this deterministic comparison, by
\cite{CKSTT2008}. Critical stability then controls the stochastic
solution up to the alleged lifetime and allows continuation. This
is the perturbative globalization principle also present in
\cite{FanXu2021,OhOkamoto2020}; here the forcing is multiplicative,
has jumps, and is estimated after one spatial derivative. The
left-limit argument is what supplies a pathwise restart compatible
with those estimates.

For scattering, finite total noise intensity extends the maximal
energy estimate to the whole half-line. The conjugated martingale
then converges at infinity, and its restarted tail tends to zero
in the critical solution norm. The same deterministic comparison
applies from a sufficiently late, path-dependent time. This yields
a finite global critical norm and hence convergence of the
nonlinear Duhamel integral in $H^1$. The limiting stochastic,
nonlinear and compensator integrals define the scattering state.
Finally, the integrable maximal energy bound permits passage from
pathwise scattering to the mean-square tail estimate. The decay
of the $L^6$ norm of a free $H^1$ solution identifies the limiting
kinetic energy. The proofs of these statements are given in
Section~\ref{sec:scattering}.

Unless otherwise stated, spatial function spaces are complex-valued.
We take derivatives of real-valued functionals over the underlying real
space. In particular, the real $L^2$ pairing is
$\langle f,g\rangle_{\mathrm{re}}
=\operatorname{Re}\int_{\R^3}f\overline g\dd x$.
We use $H^1(\R^3)=W^{1,2}(\R^3)$ with
\[
 \|f\|_{H^1(\R^3)}^2
   =\|f\|_{L^2(\R^3)}^2+\|\nabla f\|_{L^2(\R^3)}^2.
\]
For $1\le r<\infty$, the $W^{1,r}(\R^3)$ norm is
$(\|f\|_{L^r(\R^3)}^r+
\sum_{k=1}^3\|\partial_k f\|_{L^r(\R^3)}^r)^{1/r}$;
equivalent finite-dimensional norms are used for vector-valued functions.
The notation $\dot H^1(\R^3)$ refers to the homogeneous Sobolev space
with norm $\|\nabla f\|_{L^2(\R^3)}$.
For an interval $I$, set
\begin{equation}\label{spaces:def}
 \begin{split}
 Y(I)&=L^{10}(I;W^{1,30/13}(\R^3)),\qquad
 \mathcal N(I)=L^2(I;W^{1,6/5}(\R^3)),\\
 \mathcal X(I)&=D(I;H^1(\R^3))
       \cap L^\infty(I;H^1(\R^3))\cap Y(I),\\
 \|f\|_{\mathcal X(I)}&=
       \sup_{t\in I}\|f(t)\|_{H^1(\R^3)}+\|f\|_{Y(I)}.
 \end{split}
\end{equation}
Here $D$ denotes right-continuous paths with left limits. On half-open
intervals the left limit at the right endpoint is not included in the
definition. The boundedness condition is relevant when $I$ is
half-open or unbounded; the displayed supremum uses the actual
c\`adl\`ag representative, including every endpoint belonging to $I$.
We use the same norm for continuous paths. An expression such as
$Y(s,b)$ or $\mathcal N(s,b)$ means the corresponding space on
$(s,b)$; changing endpoints does not change these time-integral norms.
For a stochastic interval, each norm is evaluated on that interval
for the fixed sample point before taking an expectation. These
abbreviations specify the solution and forcing norms. An additional
process norm will be defined for the stochastic contraction; all other
norm domains are displayed explicitly. We write $F(f)=|f|^4f$ and
\[
 b_0=\Bigl(\sum_{j=1}^m\|V_j\|_{L^\infty(\R^3)}^2\Bigr)^{1/2}.
\]
Constants denoted by $C_V$ depend only on $b_0,b_1$ and universal
Strichartz constants. The letter $C$ denotes a finite numerical constant
whose value may increase from one estimate to the next; dependencies
on further parameters are indicated. We write $\mathbf1_A$ for the
indicator of $A$, $s\wedge t=\min\{s,t\}$ and
$s\vee t=\max\{s,t\}$. The notation $\mathcal F_\sigma$ denotes the
sigma-field at a stopping time $\sigma$, and
$\inf\varnothing=\infty$ in stopping-time definitions.
Interval endpoints in stochastic integrals are
interpreted as $(s,t]$, so that restarting at time $s$ uses the
post-jump datum $u(s)$ and does not repeat the jump at $s$.

\section{Deterministic and stochastic estimates}\label{sec:analytic}

The local construction and the continuation argument use estimates
whose constants are uniform over time intervals, including intervals
approaching a finite lifetime and the half-line. The deterministic
estimates below place four factors of the quintic nonlinearity in the
scaling-invariant spacetime space and the remaining factor in an
admissible Strichartz space. The stochastic estimate uses the same
spacetime norm as a martingale-type-$2$ Banach norm. Its probability
moment remains two, independently of the time integrability exponent.
All linear constants in this section are independent of the length
and location of the time interval.

\begin{lemma}\label{lem:strichartz}
Let $I=[s,b]$, where $b$ may be infinite. Then
\begin{align}
 \|S(\cdot-s)f\|_{\mathcal X(I)}
     &\le C\|f\|_{H^1(\R^3)},\label{str:free}\\
 \left\|\int_s^t S(t-r)h(r)\dd r\right\|_{\mathcal X(I)}
     &\le C\|h\|_{\mathcal N(I)},\label{str:dual}\\
 \left\|\int_s^t S(t-r)k(r)\dd r\right\|_{\mathcal X(I)}
     &\le C\|k\|_{L^1(I;H^1(\R^3))}.\label{str:lone}
\end{align}
Moreover,
\begin{equation}\label{str:sobolev}
 \|f\|_{L^{10}(I\times\R^3)}\le C\|f\|_{Y(I)}.
\end{equation}
\end{lemma}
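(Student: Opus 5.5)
The plan is to reduce everything to the standard Strichartz estimates of Keel and Tao \cite{KeelTao1998} in three dimensions. The relevant admissible pairs are those with $2/q+3/r=3/2$, $2\le q\le\infty$. Two facts drive the argument. First, the pair $(10,30/13)$ is admissible, since $2/10+3\cdot 13/30=1/5+13/10=3/2$. Second, the dual space of $\mathcal N(I)$ is built on the endpoint pair $(2,6)$, whose dual exponents are $(2,6/5)$. Because $S(t)$ commutes with $\nabla$ and with the identity, it suffices to prove each estimate for $f$ and for $\nabla f$ separately. One then adds the results, using the equivalence of the finite-dimensional norms fixed in the notation paragraph.

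For \eqref{str:free}, unitarity of $S$ on $H^1$ gives the sup-norm bound, and the homogeneous Strichartz estimate with the pair $(10,30/13)$, applied to $f$ and $\nabla f$, gives the $Y(I)$ bound. Since the constants come from the estimate on all of $\R$, they are independent of $s$ and $b$. Continuity of $t\mapsto S(t-s)f$ in $H^1$ gives membership in $D(I;H^1)$.

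For \eqref{str:dual}, apply the inhomogeneous Strichartz estimate with output pairs $(\infty,2)$ and $(10,30/13)$ and input dual pair $(2,6/5)$. This is exactly where the endpoint case of \cite{KeelTao1998} is needed. Again one applies it to $h$ and $\nabla h$, after extending $h$ by zero outside $I$; this is the reason the constant does not depend on the interval. Continuity in $H^1$ of the Duhamel integral follows first for $h$ in a dense class (for instance $h\in C_c(I;H^1)$, where it is elementary) and then extends by the uniform bound. Estimate \eqref{str:lone} follows from Minkowski's integral inequality. Writing $\int_s^t S(t-r)k(r)\dd r=\int_s^b\mathbf 1_{r<t}S(t-r)k(r)\dd r$, the $H^1$ supremum is bounded by $\|k\|_{L^1H^1}$ via unitarity. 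For the $Y$ norm, one uses that the truncation $\mathbf 1_{r<t}$ is harmless for this endpoint: bound $\|\mathbf 1_{r<\cdot}S(\cdot-r)k(r)\|_{Y(I)}\le\|S(\cdot-r)k(r)\|_{Y(\R)}\le C\|k(r)\|_{H^1}$ by \eqref{str:free}, then integrate in $r$. Continuity in time follows from dominated convergence.

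Finally, \eqref{str:sobolev} follows from the Sobolev embedding $W^{1,30/13}(\R^3)\hookrightarrow L^{10}(\R^3)$. This holds because $1/10=13/30-1/3$, so the scaling condition $1/p^*=1/p-1/3$ is met. One takes the embedding at each fixed time and then the $L^{10}$ norm in time.

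The only genuinely delicate point is the endpoint pair $(2,6)$ in \eqref{str:dual}, which cannot be reached by a plain Christ--Kiselev argument since $q=\tilde q'=2$. Here I would cite directly the inhomogeneous endpoint estimate of Keel--Tao, which handles the retarded integral including this case. A secondary point is measurability and càdlàg regularity of the Duhamel integrals for infinite $b$; this is handled by the density argument together with the interval-independent constants.
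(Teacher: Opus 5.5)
Your proposal is correct and follows essentially the same route as the paper. The paper also uses the Keel--Tao homogeneous and retarded estimates with output pairs $(\infty,2)$ and $(10,30/13)$ and endpoint forcing space $L^2(I;L^{6/5}(\R^3))$, applied to the function and its gradient; Minkowski's inequality with the homogeneous estimate for \eqref{str:lone}; the embedding $W^{1,30/13}(\R^3)\hookrightarrow L^{10}(\R^3)$ for \eqref{str:sobolev}; and continuity of the Duhamel terms by density.

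Two side remarks in your write-up are slightly inaccurate, though harmless.
\begin{itemize}
\item Christ--Kiselev would actually suffice here. The output time exponents $10$ and $\infty$ exceed the forcing exponent $2$, and only the double endpoint fails. So the genuinely endpoint input is the homogeneous $L^2_tL^6_x$ estimate.
\item The approximating class in the continuity argument must lie in $\mathcal N(I)$, e.g.\ smooth compactly supported functions. The class $C_c(I;H^1(\R^3))$ is not contained in $\mathcal N(I)$.
\end{itemize}
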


\begin{proof}
The homogeneous Strichartz estimates \cite{KeelTao1998}, with
admissible pairs $(\infty,2)$ and $(10,30/13)$, apply to $f$ and its
first derivatives. Commutation of the gradient with the free group
then gives \eqref{str:free}. For \eqref{str:dual}, use the
retarded inhomogeneous estimates with the same output pairs and the
dual endpoint forcing space $L^2(I;L^{6/5}(\R^3))$. Applied to
$\nabla h$, they yield
\[
 \begin{split}
 &\sup_{t\in I}\left\|\int_s^t S(t-r)\nabla h(r)\dd r
                                             \right\|_{L^2(\R^3)}\\
 &\quad+\left\|\int_s^t S(t-r)\nabla h(r)\dd r
                      \right\|_{L^{10}(I;L^{30/13}(\R^3))}
       \le C\|\nabla h\|_{L^2(I;L^{6/5}(\R^3))}.
 \end{split}
\]
The same estimate without $\nabla$ controls the zeroth-order term;
together they give the inhomogeneous Sobolev norm in
\eqref{str:dual}.

For \eqref{str:lone}, apply the homogeneous estimate at the integration
time $r$ and then use Minkowski's inequality:
\[
 \begin{split}
 \left\|\int_s^tS(t-r)k(r)\dd r\right\|_{Y(I)}
 &\le\int_s^b
       \|\mathbf1_{[r,b]}S(\cdot-r)k(r)\|_{Y(I)}\dd r\\
 &\le C\int_s^b\|k(r)\|_{H^1(\R^3)}\dd r.
 \end{split}
\]
The supremum in $H^1$ is bounded by the same integral using unitarity.
Finally, the spatial Sobolev embedding
$W^{1,30/13}(\R^3)\hookrightarrow L^{10}(\R^3)$ gives
\eqref{str:sobolev}. The Duhamel terms in \eqref{str:dual} and
\eqref{str:lone} are continuous in $H^1$ by approximation with smooth
functions and the displayed bounds. We shall also use the dual
homogeneous estimate
\begin{equation}\label{str:dualhomogeneous}
 \left\|\int_c^d S(-r)h(r)\dd r\right\|_{H^1(\R^3)}
                  \le C\|h\|_{\mathcal N([c,d])}.
\end{equation}
For its zeroth-order part, pair the integral against $g\in L^2(\R^3)$,
use H\"older's inequality in $L^2([c,d];L^{6/5}(\R^3))$ and
$L^2([c,d];L^6(\R^3))$, and
then apply homogeneous Strichartz to $S(r)g$. Apply the same argument
to each spatial derivative for the $H^1$ estimate. This interpretation
defines the integral even when $h$ is not integrable as an $H^1$-valued
function. In particular, for $h\in\mathcal N([s,b])$, its integral over
intervals shrinking to either endpoint tends to zero in $H^1$.
\end{proof}

\begin{lemma}\label{lem:quintic}
For $u,v\in Y(I)$,
\begin{align}
 \|F(u)\|_{\mathcal N(I)}&\le C\|u\|_{Y(I)}^5,
                                                    \label{F:single}\\
 \|F(u)-F(v)\|_{\mathcal N(I)}
  &\le C\bigl(\|u\|_{Y(I)}+\|v\|_{Y(I)}\bigr)^4
                          \|u-v\|_{Y(I)}.             \label{F:diff}
\end{align}
\end{lemma}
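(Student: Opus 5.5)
We prove both bounds by Hölder's inequality in space and time, after checking the exponents once. The norm of $\mathcal N(I)$ controls $F(u)$ in $L^2(I;L^{6/5})$ together with each $\partial_k F(u)$ in the same space. Pointwise $|F(u)|\le |u|^5$ and $|\nabla F(u)|\le C|u|^4|\nabla u|$, since $F$ is $C^1$ as a map of $\R^2$ with derivative of size $|u|^4$. The exponent bookkeeping is
\[
 \frac{4}{10}+\frac{1}{10}=\frac12,\qquad
 \frac{4}{10}+\frac{13}{30}=\frac{12}{30}+\frac{13}{30}=\frac56 .
\]
So in space and time we place four factors of $u$ in $L^{10}(I\times\R^3)$ and the remaining factor ($u$ or $\nabla u$) in $L^{10}(I;L^{30/13}(\R^3))$. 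This gives
\[
 \|\nabla F(u)\|_{L^2(I;L^{6/5})}\le C\|u\|_{L^{10}(I\times\R^3)}^4\|\nabla u\|_{L^{10}(I;L^{30/13})}.
\]
The zeroth-order term $|u|^4|u|$ is handled the same way, with the fifth factor in $L^{10}(I;L^{30/13})$. The $L^{10}$ factors are bounded by $\|u\|_{Y(I)}$ via \eqref{str:sobolev} of Lemma~\ref{lem:strichartz}, which yields \eqref{F:single}.

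For \eqref{F:diff}, the zeroth-order part uses $|F(u)-F(v)|\le C(|u|^4+|v|^4)|u-v|$ together with the same Hölder split. For the derivative we use the chain rule
\[
 \nabla F(u)=3|u|^4\nabla u+2|u|^2u^2\,\overline{\nabla u},
\]
and write $\nabla F(u)-\nabla F(v)$ as the sum of two groups of terms.

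The first group has coefficients of size $O(|u|^4+|v|^4)$ multiplying $\nabla(u-v)$. These are estimated exactly as above by $C(\|u\|_Y+\|v\|_Y)^4\|u-v\|_Y$.

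The second group consists of differences of the coefficients, $|u|^4-|v|^4$ and $|u|^2u^2-|v|^2v^2$, multiplying $\nabla v$. Each coefficient difference is bounded pointwise by $C(|u|^3+|v|^3)|u-v|$. We then place $|u-v|$ and the three cubic factors in $L^{10}(I\times\R^3)$ and $\nabla v$ in $L^{10}(I;L^{30/13})$, and use \eqref{str:sobolev} on $u-v$, which is the only place the Sobolev embedding is applied to the difference. Summing the two groups gives \eqref{F:diff}.

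The one point needing care is justifying the chain rule for $F(u)$ with $u$ only in $W^{1,30/13}$ at almost every time. We handle it by proving the bounds for smooth compactly supported functions and extending by density. The right-hand sides are continuous in $Y(I)$, by the difference estimate itself applied to smooth approximants, so the extension is legitimate. Beyond this, the argument is routine exponent bookkeeping; all constants are independent of $I$, since only Hölder's inequality and \eqref{str:sobolev} are used.
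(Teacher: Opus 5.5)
Your proposal is correct and follows essentially the same route as the paper: the same chain-rule formula for $\nabla F$, the same split of $\nabla F(u)-\nabla F(v)$ into a term against $\nabla(u-v)$ plus coefficient differences of size $C(|u|+|v|)^3|u-v|$ against $\nabla v$, the same H\"older placement of four factors in $L^{10}(I\times\R^3)$ (via \eqref{str:sobolev}) and one in $L^{10}(I;L^{30/13}(\R^3))$, and the same smooth-approximation extension. One correction to your wording of that last step: the right-hand sides are trivially continuous, and what the difference estimate actually supplies is continuity of the map $u\mapsto F(u)$ into $\mathcal N(I)$; the limit is then identified with $F(u)$ by a.e.\ convergence along a subsequence.
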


\begin{proof}
Since $F(w)=w^3\overline w^{\,2}$, differentiation of a smooth function
gives the explicit identity
\begin{equation}\label{F:chain-explicit}
 \nabla F(u)=3u^2\overline u^{\,2}\nabla u
                   +2u^3\overline u\,\nabla\overline u.
\end{equation}
Thus $|\nabla F(u)|\le5|u|^4|\nabla u|$. Applying the factorization
$a^k-b^k=(a-b)\sum_{\ell=0}^{k-1}a^{k-1-\ell}b^\ell$ to the two
polynomial factors yields
\[
 |F(u)-F(v)|\le C(|u|+|v|)^4|u-v|
\]
and, by subtracting \eqref{F:chain-explicit},
\[
 \begin{split}
 |\nabla(F(u)-F(v))|
 \le{}&C(|u|+|v|)^4|\nabla(u-v)|\\
      &+C(|u|+|v|)^3(|\nabla u|+|\nabla v|)|u-v|.
 \end{split}
\]
For the first summand of \eqref{F:chain-explicit}, the subtraction
separates the difference of the differentiated factors from that of
their coefficients:
\[
 u^2\overline u^{\,2}\nabla(u-v)
 +(u^2\overline u^{\,2}-v^2\overline v^{\,2})\nabla v.
\]
The coefficient difference is bounded by
$C(|u|+|v|)^3|u-v|$. The second summand is handled by the same
factorization, with $u^3\overline u-v^3\overline v$ in place of this
coefficient. This proves the displayed pointwise estimate.

H\"older's inequality in time and space, together with
\eqref{str:sobolev}, gives the required norm estimates. Set $w=u-v$.
For the first type of differentiated term, this gives
\[
 \begin{split}
 \||u|^4\nabla w\|_{L^2(I;L^{6/5}(\R^3))}
 &\le\|u\|_{L^{10}(I\times\R^3)}^4
                     \|\nabla w\|_{L^{10}(I;L^{30/13}(\R^3))}\\
 &\le C\|u\|_{Y(I)}^4\|w\|_{Y(I)}.
 \end{split}
\]
For the second type,
\[
 \begin{split}
 \||u|^3|w|\,|\nabla v|\|_{L^2(I;L^{6/5}(\R^3))}
 &\le\|u\|_{L^{10}(I\times\R^3)}^3
       \|w\|_{L^{10}(I\times\R^3)}
       \|\nabla v\|_{L^{10}(I;L^{30/13}(\R^3))}\\
 &\le C\|u\|_{Y(I)}^3\|v\|_{Y(I)}\|w\|_{Y(I)}.
 \end{split}
\]
The mixed monomials in $(|u|+|v|)^k$ satisfy the same estimate after
interchanging factors. For the undifferentiated difference, put $w$
in $L^{10}(I;L^{30/13}(\R^3))$ and the other four factors in
$L^{10}(I\times\R^3)$. Adding the resulting bounds proves
\eqref{F:diff}; taking $v=0$ proves \eqref{F:single}. Smooth
approximation in $Y(I)$ and the estimates themselves extend the
identities to Sobolev functions. The constants are independent of
$I$, so smallness in the local construction can be imposed through
the accumulated critical norm.
\end{proof}

The global deterministic theorem supplies the comparison solution
used in the stochastic continuation argument. We record its consequence
in the inhomogeneous norm $Y$.

\begin{lemma}\label{lem:detglobal}
For each $K<\infty$ there is $C_0(K)<\infty$ such that every solution
$v$ of \eqref{det:eq} with $\|v(s)\|_{H^1(\R^3)}\le K$ is global and
satisfies
\begin{equation}\label{det:bound}
 \sup_{t\ge s}\|v(t)\|_{H^1(\R^3)}
       +\|v\|_{Y([s,\infty))}\le C_0(K).
\end{equation}
\end{lemma}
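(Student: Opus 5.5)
The plan is to reduce the statement to the global theorem of \cite{CKSTT2008} and then upgrade its homogeneous conclusion to the inhomogeneous norm $Y$ by a short-interval persistence argument. The CKSTT theorem says the following. Every solution with $\nabla v(s)\in L^2$ and finite energy is global, and it obeys
\[
\|v\|_{L^{10}([s,\infty)\times\R^3)}\le C_1(E(v(s))).
\]
Conservation of mass and energy then gives
\[
\sup_{t\ge s}\|v(t)\|_{H^1(\R^3)}^2\le M(v(s))+2E(v(s)).
\]
Since $\|v(s)\|_{H^1}\le K$, Sobolev embedding $\|f\|_{L^6}\le C\|\nabla f\|_{L^2}$ yields $E(v(s))\le \tfrac12K^2+CK^6$. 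So both the energy and the critical $L^{10}_{t,x}$ norm are bounded in terms of $K$ alone. Time translation invariance lets us take $s$ arbitrary.

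\textbf{Upgrading $L^{10}_{t,x}$ to $Y$.} Fix $\eta>0$ small, depending only on Strichartz constants and $K$. Partition $[s,\infty)$ into $J\le J(K,\eta)$ consecutive intervals $I_k=[t_k,t_{k+1}]$, each satisfying $\|v\|_{L^{10}(I_k\times\R^3)}\le\eta$. Such a partition exists because $\|v\|_{L^{10}}^{10}$ is additive over intervals, so one may take $J\le C_1^{10}/\eta^{10}+1$.

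On each $I_k$, write the Duhamel formula from $t_k$ and apply \eqref{str:free} and \eqref{str:dual}. We then need a refinement of \eqref{F:single} in which four factors are taken in $L^{10}_{t,x}$ and only one in $Y$. This is exactly what the proof of Lemma~\ref{lem:quintic} produces, since each term there is bounded by $\|u\|_{L^{10}(I\times\R^3)}^4\|u\|_{Y(I)}$. The result is
\[
\|v\|_{Y(I_k)}\le C\|v(t_k)\|_{H^1}+C\eta^4\|v\|_{Y(I_k)}.
\]
We may absorb the last term provided $\|v\|_{Y(I_k)}$ is known to be finite a priori. This holds on compact subintervals by the local theory: the CKSTT solution is the Strichartz-class solution, lying in $L^{10}_{\mathrm{loc}}W^{1,30/13}$ on compact time intervals. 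If needed, one runs a continuity argument in the right endpoint, using the continuity of $t\mapsto\|v\|_{Y([t_k,t])}$. For unbounded final intervals, apply the estimate on $[t_J,T]$ and let $T\to\infty$ by monotone convergence.

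Choosing $C\eta^4\le 1/2$ gives $\|v\|_{Y(I_k)}\le 2C\sup_t\|v(t)\|_{H^1}$. Summing the tenth powers over $J$ intervals bounds $\|v\|_{Y([s,\infty))}$ by $C(K)$. Adding the $H^1$ supremum from conservation defines $C_0(K)$ and proves \eqref{det:bound}.

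\textbf{Main obstacle.} The main obstacle is bookkeeping rather than analysis. First, the cited theorem is phrased in $\dot H^1$ with a $\dot S^1$ or $L^{10}_{t,x}$ bound; the bridge is the mixed-norm refinement of Lemma~\ref{lem:quintic} together with mass conservation, which supplies the zeroth-order part. Second, the a priori finiteness of $Y$ on compact intervals, needed to justify absorption, must be handled. I would treat it through the standard local well-posedness theory in $H^1$, with uniqueness in $Y$ as given by Lemma~\ref{lem:quintic}, and a contraction on short intervals.
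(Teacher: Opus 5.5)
Your proposal is correct and follows essentially the same route as the paper: take the global $L^{10}_{t,x}$ bound from \cite{CKSTT2008}, control the $H^1$ supremum by mass and energy conservation with Sobolev embedding, and partition $[s,\infty)$ into boundedly many intervals of small $L^{10}_{t,x}$ norm. On each interval you apply Strichartz with four factors in $L^{10}_{t,x}$ and one in $Y$, justify absorption on compact subintervals, and sum tenth powers; the paper does the same and additionally takes the CKSTT bound as a nondecreasing function of the energy, which is what your step ``bounded in terms of $K$ alone'' tacitly uses.
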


\begin{proof}
The global energy-critical theorem \cite{CKSTT2008} gives a finite
nondecreasing function $C_{\rm ec}$ such that
\[
 \|v\|_{L^{10}([s,\infty)\times\R^3)}
                  \le C_{\rm ec}(E(v(s))).
\]
This is the only large-data deterministic input. Sobolev embedding gives
$E(v(s))\le K^2/2+C K^6$, while conservation gives
\[
 \sup_{t\ge s}\|v(t)\|_{H^1(\R^3)}^2
             \le K^2+2E(v(s)).
\]
Partition $[s,\infty)$ by successive levels of the integral of
$\|v(t)\|_{L^{10}(\R^3)}^{10}$, so that every interval except
possibly the last has $L^{10}$ spacetime norm exactly $\delta$.
The number $J$ of intervals then satisfies
\[
 J\le1+\delta^{-10}C_{\rm ec}(K^2/2+C K^6)^{10}.
\]
On each $I_j$, Strichartz and \eqref{F:chain-explicit} give
\[
 \|\nabla v\|_{L^{10}(I_j;L^{30/13}(\R^3))}
 \le C\|\nabla v(\inf I_j)\|_{L^2(\R^3)}
       +C\delta^4
          \|\nabla v\|_{L^{10}(I_j;L^{30/13}(\R^3))}.
\]
Choose $\delta$ so that the last coefficient is at most $1/2$.
The analogous undifferentiated estimate is
\[
 \|v\|_{L^{10}(I_j;L^{30/13}(\R^3))}
 \le C\|v(\inf I_j)\|_{L^2(\R^3)}
        +C\delta^4\|v\|_{L^{10}(I_j;L^{30/13}(\R^3))}.
\]
Mass conservation controls its initial term. After absorption, each
interval satisfies
\[
 \|v\|_{Y(I_j)}\le
       C\bigl(M(v(s))^{1/2}+E(v(s))^{1/2}\bigr).
\]
To justify absorption without presupposing a finite global $Y$ norm,
apply these estimates first on compact local intervals and increase
their right endpoints. Summing the tenth powers yields
\[
 \|v\|_{Y([s,\infty))}
 \le C J^{1/10}\bigl(K+(K^2+C K^6)^{1/2}\bigr).
\]
This specifies a finite function $C_0(K)$ in \eqref{det:bound},
depending only on $K$ and the universal deterministic constants.
In particular, it is independent of the starting time.
\end{proof}

\begin{lemma}\label{lem:stability}
For every $K<\infty$ there are $\delta_*(K)>0$ and $C_*(K)<\infty$
with the following property. Let $I=[s,b]$, and let $v$ solve
\eqref{det:eq} with $\|v(s)\|_{H^1(\R^3)}\le K$. Suppose that
$u\in\mathcal X(I)$ and $\eta\in\mathcal X(I)$ satisfy
$\eta(s)=0$ and
\begin{equation}\label{forced:mild}
 u(t)=S(t-s)u(s)-i\int_s^t S(t-r)F(u(r))\dd r+\eta(t).
\end{equation}
If
\[
 e:=\|u(s)-v(s)\|_{H^1(\R^3)}
           +\|\eta\|_{\mathcal X(I)}\le\delta_*(K),
\]
then
\begin{equation}\label{stability:bound}
 \|u-v\|_{\mathcal X(I)}\le C_*(K)e,
 \qquad \|u\|_{Y(I)}\le C_0(K)+C_*(K)e.
\end{equation}
The constants do not depend on $b$, and the assertion applies to
c\`adl\`ag external functions $\eta$.
\end{lemma}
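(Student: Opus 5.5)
We follow the standard long-time perturbation scheme for energy-critical NLS, with the deterministic solution $v$ supplying the large bound. Set $w=u-v$, so that on $I$
\[
 w(t)=S(t-s)w(s)-i\int_s^t S(t-r)\bigl(F(v+w)-F(v)\bigr)(r)\dd r+\eta(t).
\]
By Lemma~\ref{lem:detglobal}, $v$ is global with $\sup_{t\ge s}\|v(t)\|_{H^1}+\|v\|_{Y([s,\infty))}\le C_0(K)$. We fix a small $\delta_0=\delta_0(K)>0$, to be chosen below. We then partition $[s,b]$ (or $[s,\infty)$) into consecutive intervals $I_j=[t_j,t_{j+1}]$, $j=0,\dots,J-1$, on each of which $\|v\|_{Y(I_j)}\le\delta_0$. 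Since tenth powers add, we may take $J\le 1+(C_0(K)/\delta_0)^{10}$. This bound depends only on $K$ and not on $b$, which is the source of the uniformity in $b$.

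On a single $I_j$ we restart the Duhamel formula at $t_j$. Subtracting the formulas at $t_j$ and using the group property of $S$, $w$ satisfies the same identity with $s$ replaced by $t_j$ and $\eta$ replaced by $\eta_j(t)=\eta(t)-S(t-t_j)\eta(t_j)$. By \eqref{str:free},
\[
 \|\eta_j\|_{\mathcal X(I_j)}\le (1+C)\|\eta\|_{\mathcal X(I)}.
\]
This bound uses only the supremum part of the $\mathcal X$ norm at the single time $t_j$, so it applies equally to càdlàg $\eta$. Applying \eqref{str:free}, \eqref{str:dual} and \eqref{F:diff} gives
\[
 \|w\|_{\mathcal X(I_j)}\le C\|w(t_j)\|_{H^1}+C\|\eta\|_{\mathcal X(I)}
 +C\bigl(\delta_0+\|w\|_{Y(I_j)}\bigr)^4\|w\|_{Y(I_j)}.
\]
We choose $\delta_0$ so that $C(2\delta_0)^4\le 1/2$. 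Then, as long as $\|w\|_{Y(I_j)}\le\delta_0$, the nonlinear term absorbs, and we obtain
\[
 \|w\|_{\mathcal X(I_j)}\le 2C\bigl(\|w(t_j)\|_{H^1}+\|\eta\|_{\mathcal X(I)}\bigr).
\]
Because $\sup_{t\in I_j}\|w(t)\|_{H^1}$ is part of the $\mathcal X$ norm, and the càdlàg representative at $t_{j+1}$ is included, this bounds $\|w(t_{j+1})\|_{H^1}$ as well. Iterating, with $e_j:=\|w(t_j)\|_{H^1}+\|\eta\|_{\mathcal X(I)}$, gives $e_{j+1}\le (2C+1)e_j$. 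Hence $e_j\le (2C+1)^J e$ for all $j$, and summing over $j$ yields
\[
 \|w\|_{\mathcal X(I)}\le J(2C+1)^{J+1}e=:C_*(K)e.
\]
The bound on $\|u\|_{Y(I)}$ then follows from the triangle inequality together with $\|v\|_{Y}\le C_0(K)$. Finally, we choose $\delta_*(K)$ so that $2C(2C+1)^J\delta_*\le\delta_0$. This guarantees that the bootstrap hypothesis $\|w\|_{Y(I_j)}\le\delta_0$ is never violated.

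The main obstacle is making the bootstrap rigorous on each $I_j$: we must show that $\|w\|_{Y(I_j)}\le\delta_0$ holds, not merely assume it. We will use a continuity argument in the right endpoint $\tau\in[t_j,t_{j+1}]$. The map $\tau\mapsto\|w\|_{Y([t_j,\tau])}$ is continuous and vanishes at $\tau=t_j$, because $w=u-v\in Y(I)$ and $Y$ is a time-integral norm, unaffected by jumps of $\eta$. The estimate above then shows that this quantity can never reach $\delta_0$, since it would then be at most $\delta_0/2$. The only subtlety from the càdlàg setting is the handling of the $H^1$ supremum at the interval endpoints. We will treat it by working with the closed intervals $I_j$ and the actual càdlàg values, as in the convention for $\mathcal X$.

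If $b=\infty$, the same argument applies with the last interval unbounded. All constants depend only on $K$ through $C_0(K)$ and universal Strichartz constants.
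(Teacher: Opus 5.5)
Your proposal is correct and follows essentially the paper's proof. It uses the same partition by the $Y$ norm of $v$ into at most $J_K$ intervals, the same reset $\eta_j(t)=\eta(t)-S(t-t_j)\eta(t_j)$, and the same geometric induction on $\|w(t_j)\|_{H^1(\R^3)}+\|\eta\|_{\mathcal X(I)}$. The differences are minor. You run the continuity argument on $\tau\mapsto\|w\|_{Y([t_j,\tau])}$ rather than on the paper's $\|w-\eta_j\|_{\mathcal X([t_j,c])}$; this is equally valid and avoids the paper's separate $d_j=0$ case. You should also impose $2C(2C+1)^J\delta_*\le\delta_0/2$ instead of $\le\delta_0$, so that the bootstrap improvement is strict, as your obstacle paragraph already assumes.
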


\begin{proof}
We first establish an estimate on a single interval $I_j=[t_j,t_{j+1}]$
such that $\|v\|_{Y(I_j)}\le\gamma$, where $\gamma$ will be fixed
below. Let $w=u-v$ and reset the external function by
\begin{equation}\label{stability:reset}
 \eta_j(t)=\eta(t)-S(t-t_j)\eta(t_j),\qquad t\in I_j.
\end{equation}
Then $\eta_j(t_j)=0$. If $C_S$ is the constant in
\eqref{str:free}, restriction to $I_j$ gives
\begin{equation}\label{stability:resetbound}
 \|\eta_j\|_{\mathcal X(I_j)}
 \le\|\eta\|_{\mathcal X(I_j)}
         +C_S\|\eta(t_j)\|_{H^1(\R^3)}
 \le (1+C_S)\|\eta\|_{\mathcal X(I)}.
\end{equation}
The two mild equations imply
\[
 w(t)=S(t-t_j)w(t_j)
       -i\int_{t_j}^t S(t-r)(F(u(r))-F(v(r)))\dd r+\eta_j(t).
\]
In particular, $z_j=w-\eta_j$ is continuous in $H^1(\R^3)$.
Its cumulative $Y$ norm is continuous in the upper endpoint as well.
For $c\in[t_j,t_{j+1}]$, put
\[
 Z_j(c)=\|z_j\|_{\mathcal X([t_j,c])},\qquad
 d_j=\|w(t_j)\|_{H^1(\R^3)}+\|\eta_j\|_{\mathcal X(I_j)}.
\]
The function $Z_j$ is continuous, including at $t_j$, where
$Z_j(t_j)=\|w(t_j)\|_{H^1(\R^3)}$.

The difference estimate \eqref{F:diff}, together with
$u=v+z_j+\eta_j$, yields
\begin{equation}\label{stability:short}
 Z_j(c)\le C_S d_j
       +C\bigl(2\gamma+Z_j(c)+d_j\bigr)^4
                                           \bigl(Z_j(c)+d_j\bigr).
\end{equation}
Enlarge $C_S$ to be at least one and choose $\gamma>0$ such that
$C(3\gamma)^4\le1/8$. Choose $d_0>0$ so small that
$(4C_S+1)d_0\le\gamma$. Under the bootstrap bound
$Z_j(c)\le4C_Sd_j$, with $d_j\le d_0$, equation
\eqref{stability:short} gives
\[
 Z_j(c)\le C_Sd_j+\frac18(Z_j(c)+d_j)
        \le\frac87(C_S+1/8)d_j\le2C_Sd_j.
\]
For $d_j>0$, this improves the bootstrap bound. Since the initial
value satisfies it and $Z_j$ is continuous, the estimate holds up to
$t_{j+1}$. When $d_j=0$, the same conclusion follows by replacing it on the
right of \eqref{stability:short} by a positive $\eps\le d_0$ and
then letting $\eps\downarrow0$. We have proved
\begin{equation}\label{stability:oneinterval}
 \|w\|_{\mathcal X(I_j)}
   \le(2C_S+1)d_j
   \le C_1\bigl(\|w(t_j)\|_{H^1(\R^3)}
                      +\|\eta\|_{\mathcal X(I)}\bigr)
\end{equation}
whenever $d_j\le d_0$, with an absolute constant $C_1\ge2$.
Subtracting $\eta_j$ removes every jump from the quantity used in
the continuity argument; no continuity of $u$ or $\eta$ is required.

We now choose the partition for the whole interval. By
Lemma~\ref{lem:detglobal}, successive levels of the tenth-power
$Y$ integral of $v$ give at most
\[
 J_K=\left\lceil1+(C_0(K)/\gamma)^{10}\right\rceil
\]
intervals, independently of $b$. Write
$e_j=\|w(t_j)\|_{H^1(\R^3)}$ and
$h=\|\eta\|_{\mathcal X(I)}$. Estimate
\eqref{stability:oneinterval} implies
\[
 e_{j+1}+h\le(C_1+1)(e_j+h),
\]
provided $e_j+(1+C_S)h\le d_0$. Since $e_0+h=e$, induction
gives
\[
 e_j+h\le(C_1+1)^j e.
\]
It is therefore enough to require
\[
 e\le\delta_*(K):=
   \frac{d_0}{(1+C_S)(C_1+1)^{J_K}}.
\]
This choice verifies the smallness hypothesis at every step.
The $H^1$ supremum over $I$ is the
maximum of the interval suprema, while the tenth powers of the
$Y$ norms add. Consequently,
\[
 \|w\|_{\mathcal X(I)}
       \le C_1(1+J_K^{1/10})(C_1+1)^{J_K} e.
\]
This proves the first bound in \eqref{stability:bound}; the second
follows from \eqref{det:bound} and the triangle inequality. For an
infinite right endpoint, apply these estimates on every finite
initial interval with the same $J_K$ and constants, and then increase
the endpoint. The lemma is pathwise: its comparison intervals need
not be stopping times when it is applied to a fixed sample path.
\end{proof}

\begin{lemma}\label{lem:maps}
For every $f\in H^1(\R^3)$ and $(t,z)$,
\begin{align}
 \|G_{t,z}f\|_{H^1(\R^3)}
       &\le C_V|a(t)||z|\|f\|_{H^1(\R^3)},\label{G:bound}\\
 \|R_{t,z}f\|_{H^1(\R^3)}
       &\le C_V a(t)^2|z|^2\|f\|_{H^1(\R^3)},\label{R:bound}\\
 \|B(t)f\|_{H^1(\R^3)}
       &\le C_V Q a(t)^2\|f\|_{H^1(\R^3)}.\label{B:bound}
\end{align}
The same bounds hold for differences, with $f$ replaced by $f-g$.
\end{lemma}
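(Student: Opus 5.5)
The plan is to write $\theta=a(t)$ and $\phi=\theta A_z$, so that $G_{t,z}f=(e^{-i\phi}-1)f$ and $R_{t,z}f=(e^{-i\phi}-1+i\phi)f$. Both operators are multiplication by a function $m(\phi)$ of the real phase $\phi$. All three bounds then reduce to pointwise estimates for $m$ and for $\nabla[m(\phi)]=m'(\phi)\nabla\phi$. The basic inputs are $|A_z(x)|\le b_0|z|$ and $|\nabla A_z(x)|\le b_1|z|$, both from Cauchy--Schwarz in $j$, since $A_z=\sum_j z_jV_j$. Because $V_j\in W^{1,\infty}$, $A_z$ is Lipschitz, so $e^{-i\phi}$ is a $W^{1,\infty}$ multiplier and the Leibniz rule $\nabla(mf)=m\nabla f+f\nabla m$ holds for $f\in H^1$. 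This can be justified by approximating $f$ by smooth functions.

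For \eqref{G:bound}, the scalar inequalities $|e^{-i\phi}-1|\le|\phi|$ and $|\nabla e^{-i\phi}|=|\nabla\phi|$ give
\[
|G_{t,z}f|\le|\theta|b_0|z||f|,\qquad
|\nabla G_{t,z}f|\le|\theta|b_0|z||\nabla f|+|\theta|b_1|z||f|.
\]
Taking $L^2$ norms yields \eqref{G:bound} with $C_V=C(b_0+b_1)$. The derivative bound does \emph{not} vanish quadratically, but it does not have to here.

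For \eqref{R:bound}, I will use $|e^{-i\phi}-1+i\phi|\le\phi^2/2$. The derivative is $\nabla(e^{-i\phi}-1+i\phi)=-i(e^{-i\phi}-1)\nabla\phi$, which is bounded by $|\phi||\nabla\phi|\le\theta^2b_0b_1|z|^2$. This gives
\[
|R_{t,z}f|\le\tfrac12\theta^2b_0^2|z|^2|f|,\qquad
|\nabla R_{t,z}f|\le\tfrac12\theta^2b_0^2|z|^2|\nabla f|+\theta^2b_0b_1|z|^2|f|,
\]
and hence \eqref{R:bound} with $C_V=C(b_0^2+b_0b_1)$. This quadratic gain in the derivative term is the only delicate point. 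It works because the derivative of the second-order remainder is the first-order remainder times $\nabla\phi$, so both the function and its gradient are $O(\theta^2|z|^2)$.

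For \eqref{B:bound}, I first confirm that the Bochner integral is legitimate. The map $z\mapsto R_{t,z}f\in H^1$ is continuous: pointwise continuity in $z$ plus dominated convergence, using the bounds above. It is therefore strongly measurable, and by \eqref{R:bound} its norm is bounded by $C_Va(t)^2|z|^2\|f\|_{H^1}$, which is $\nu$-integrable with integral $C_VQa(t)^2\|f\|_{H^1}$ by \eqref{noise:moment}. The Bochner triangle inequality then gives \eqref{B:bound}.

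The difference statements are immediate because each operator is linear in $f$: $G_{t,z}f-G_{t,z}g=G_{t,z}(f-g)$, and likewise for $R_{t,z}$ and $B(t)$. I do not expect a real obstacle. The only care needed is the Leibniz rule for a Lipschitz multiplier, which is standard: $W^{1,\infty}$ functions act boundedly on $H^1$, with the product rule holding almost everywhere.
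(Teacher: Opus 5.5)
Your proposal is correct and follows the paper's proof essentially step for step. The shared ingredients are the scalar bounds $|e^{-i\phi}-1|\le|\phi|$ and $|e^{-i\phi}-1+i\phi|\le\tfrac12\phi^2$, the identity $\nabla(e^{-i\phi}-1+i\phi)=-i(e^{-i\phi}-1)\nabla\phi$ for the quadratic gain in the gradient, continuity in $z$ plus dominated convergence for the Bochner integral defining $B(t)$, and linearity for the difference bounds.
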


\begin{proof}
Put $\theta=a(t)A_z$. The integral representations
\[
 e^{-i\theta}-1=-i\theta\int_0^1 e^{-ir\theta}\dd r,
 \qquad
 e^{-i\theta}-1+i\theta
       =-\theta^2\int_0^1(1-r)e^{-ir\theta}\dd r
\]
give, for all real $\theta$,
\[
 |e^{-i\theta}-1|\le|\theta|,
 \qquad |e^{-i\theta}-1+i\theta|\le\tfrac12|\theta|^2.
\]
Also $\|\theta\|_{L^\infty(\R^3)}\le|a(t)||z|b_0$ and
$\|\nabla\theta\|_{L^\infty(\R^3)}\le|a(t)||z|b_1$.
The product rule gives
\[
 \nabla(G_{t,z}f)=(e^{-i\theta}-1)\nabla f
                         -ie^{-i\theta}f\nabla\theta.
\]
The two parts of the Sobolev norm therefore satisfy
\[
 \begin{split}
 \|G_{t,z}f\|_{L^2(\R^3)}
     &\le |a(t)||z|b_0\|f\|_{L^2(\R^3)},\\
 \|\nabla(G_{t,z}f)\|_{L^2(\R^3)}
     &\le |a(t)||z|
        \bigl(b_0\|\nabla f\|_{L^2(\R^3)}
                         +b_1\|f\|_{L^2(\R^3)}\bigr).
 \end{split}
\]
This proves \eqref{G:bound}. For the remainder,
\[
 \nabla(e^{-i\theta}-1+i\theta)
              =-i(e^{-i\theta}-1)\nabla\theta,
\]
and the product rule gives
\[
 \begin{split}
 \|R_{t,z}f\|_{L^2(\R^3)}
 &\le\tfrac12a(t)^2|z|^2b_0^2\|f\|_{L^2(\R^3)},\\
 \|\nabla(R_{t,z}f)\|_{L^2(\R^3)}
 &\le a(t)^2|z|^2
     \bigl(\tfrac12b_0^2\|\nabla f\|_{L^2(\R^3)}
                        +b_0b_1\|f\|_{L^2(\R^3)}\bigr).
 \end{split}
\]
This proves \eqref{R:bound}. For a fixed $f$, the map
$z\mapsto R_{t,z}f$ is strongly measurable in $H^1$: continuity in
$z$ follows from the displayed product formulas and dominated
convergence. Its norm is integrable by \eqref{noise:moment}.
Consequently the Bochner integral defining $B(t)f$ exists, and
\[
 \|B(t)f\|_{H^1(\R^3)}
 \le\int_Z\|R_{t,z}f\|_{H^1(\R^3)}\nu(\dd z)
 \le C_V Q a(t)^2\|f\|_{H^1(\R^3)}.
\]
Linearity gives the difference estimates. The scalar exponential
bounds hold at every amplitude, so these constants also apply to
unbounded jump sizes. They depend only on the profile bounds $b_0$
and $b_1$. The spatial product rules require only the weak first
derivatives of the bounded Lipschitz profiles.
\end{proof}

\begin{lemma}\label{lem:poisson}
Let $\Phi$ be predictable and $H^1(\R^3)$-valued, with
\[
 \E\int_s^b\int_Z\|\Phi(r,z)\|_{H^1(\R^3)}^2\nu(\dd z)\dd r<\infty.
\]
The convolution
$W(t)=\int_s^t\int_Z S(t-r)\Phi(r,z)\widetilde N(\dd r,\dd z)$
has a c\`adl\`ag $H^1$ version and satisfies
\begin{equation}\label{poisson:bound}
 \E\|W\|_{\mathcal X([s,b])}^2
 \le C\E\int_s^b\int_Z\|\Phi(r,z)\|_{H^1(\R^3)}^2\nu(\dd z)\dd r.
\end{equation}
The same bound holds for the convolution integrated over a stochastic
interval $(\sigma,t]$, with norm on $[\sigma,\tau]$, whenever
$\sigma\le\tau$ are stopping times and the right-hand side is finite.
The constant is independent of both stopping times.
\end{lemma}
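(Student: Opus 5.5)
The plan is to split $W(t)=S(t)\,M(t)$, where
\[
 M(t)=\int_s^t\int_Z S(-r)\Phi(r,z)\,\widetilde N(\dd r,\dd z).
\]
Since $S(-r)$ is unitary on $H^1(\R^3)$ and commutes with derivatives, $M$ is an $H^1$-valued square-integrable martingale, and it has a c\`adl\`ag version by the Hilbert-space theory. Doob's maximal inequality together with the It\^o isometry gives
\[
 \E\sup_{t\in[s,b]}\|M(t)\|_{H^1(\R^3)}^2\le4\,\E\int_s^b\int_Z\|\Phi\|_{H^1}^2\,\nu(\dd z)\dd r .
\]
Strong continuity of $S(t)$ on $H^1$ then transfers the c\`adl\`ag property to $W$. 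Because $\|W(t)\|_{H^1}=\|M(t)\|_{H^1}$, this also yields the supremum part of $\|W\|_{\mathcal X}$.

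For the $Y$-part I will follow the Poisson Strichartz method. I regard the retarded family $r\mapsto\mathbf 1_{\{t\ge r\}}S(t-r)\Phi(r,z)$ as a random element $\Psi(r,z)$ of the Banach space $Y([s,b])=L^{10}(s,b;W^{1,30/13})$. This space is a closed subspace of an $L^{10}$-Bochner space over an $L^{30/13}$ space, so it has martingale type $2$. Predictability of $\Psi$ follows from that of $\Phi$, by approximating with simple integrands. By \eqref{str:free} on $[r,b]$ we have $\|\Psi(r,z)\|_{Y}\le C\|\Phi(r,z)\|_{H^1}$, uniformly in $r$ and in the interval length. The second-moment (type-$2$) inequality of \cite{DirksenMaasVanNeerven2013} for the compensated Poisson integral in a martingale-type-$2$ space then gives
\[
 \E\Bigl\|\int_s^b\int_Z\Psi(r,z)\,\widetilde N(\dd r,\dd z)\Bigr\|_Y^2\le C\,\E\int_s^b\int_Z\|\Psi\|_Y^2\,\nu\,\dd r .
\]
Only the second probability moment appears, independently of the exponent $10$.

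The step I expect to be the main obstacle is identifying this $Y$-valued integral with the pathwise function $t\mapsto W(t)$. I will first do this for simple predictable $\Phi$, where both objects are finite sums and coincide. I will then extend by density: on one side use the $Y$-estimate, on the other the $H^1$ isometry at each fixed $t$. Limits in $L^2(\Omega;Y)$ and in $L^2(\Omega;H^1)$ for every fixed $t$ agree a.e.\ in $(t,\omega)$, so the identification holds. Summing the two bounds proves \eqref{poisson:bound}.

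For stopping times $\sigma\le\tau$, I will replace $\Phi$ by $\mathbf 1_{(\sigma,\tau]}(r)\Phi(r,z)$, which is again predictable. Interpreting the norm on $[\sigma,\tau]$ means restricting to $[\sigma,\tau]$ pathwise. On that interval the convolution started at $\sigma$ coincides with the convolution of the truncated integrand over $[0,b]$, or over $[0,n]$ when $b=\infty$, with the monotone limit $n\to\infty$ taken at the end. The $Y$-norm over a subinterval is dominated by the $Y$-norm over the full interval, and the supremum is handled through the truncated martingale. The constants come from Doob's inequality, the type-$2$ constant and the Strichartz constant in \eqref{str:free}, so they do not depend on $\sigma$ or $\tau$.
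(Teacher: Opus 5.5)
Your proposal is correct and takes essentially the same route as the paper. You write $W(t)=S(t)M(t)$ and use Doob for the $H^1$ supremum, apply the martingale-type-$2$ second-moment Poisson inequality in $Y([s,b])$ with the bound $\|\mathbf 1_{[r,b]}S(\cdot-r)\Phi(r,z)\|_{Y}\le C\|\Phi(r,z)\|_{H^1}$, identify the two constructions by approximation with simple integrands, and handle stopping times with the predictable indicator $\mathbf 1_{(\sigma,\tau]}$. The minor differences are that the paper explicitly proves continuity of $r\mapsto\mathbf 1_{[r,b]}S(\cdot-r)f$ in $Y([s,b])$, which your simple-function argument for predictability also implicitly needs, and that it identifies the limits through uniform-in-time $H^1$ convergence rather than your fixed-time isometry; both identifications work.
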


\begin{proof}
Put $I=[s,b]$, first with $b<\infty$. We first consider bounded
predictable simple integrands supported on finitely many time intervals
and on mark sets of finite measure. Their convolutions are finite jump
sums minus Bochner compensator integrals, which identify the
Hilbert-space and spacetime-valued constructions. The estimates below
extend this identification by completion in the norm on the
right-hand side of \eqref{poisson:bound}.

The process
\[
 P(t)=\int_s^t\int_ZS(-r)\Phi(r,z)\widetilde N(\dd r,\dd z)
\]
is a square-integrable Hilbert-space martingale with c\`adl\`ag paths.
The isometry, Doob's inequality, and unitarity give
\[
 \E\sup_{s\le t\le b}\|S(t)P(t)\|_{H^1(\R^3)}^2
 \le4\E\int_s^b\int_Z\|\Phi(r,z)\|_{H^1(\R^3)}^2\nu(\dd z)\dd r.
\]
For the spacetime term, regard
\[
 (r,z)\longmapsto
       [t\longmapsto\mathbf1_{\{r\le t\le b\}}S(t-r)\Phi(r,z)]
\]
as a function with values in $Y(I)$. To justify its predictability,
define the deterministic operators
\[
 K_rf=\mathbf1_{[r,b]}S(\cdot-r)f,\qquad r\in[s,b].
\]
The homogeneous estimate gives $\|K_rf\|_{Y(I)}\le C\|f\|_{H^1(\R^3)}$
uniformly in $r$. For a fixed $f$ and $q\le r$, the difference is
\[
 K_rf-K_qf
 =\mathbf1_{[r,b]}S(\cdot-r)(f-S(r-q)f)
       -\mathbf1_{[q,r)}S(\cdot-q)f.
\]
The first term tends to zero in $Y(I)$ as $r\to q$ by strong
continuity of the free group on $H^1(\R^3)$. The second tends to zero
by absolute continuity of the time integral for the fixed free
trajectory. The reversed ordering of $q,r$ gives the same conclusion.
Thus $r\mapsto K_rf$ is continuous. The uniform operator bound also
shows joint continuity in $(r,f)$, so $K_r\Phi(r,z)$ is a predictable
$Y(I)$-valued integrand.

The Banach space $Y(I)$ has martingale type $2$. Indeed, the map
\[
 f\longmapsto(f,\partial_1f,\partial_2f,\partial_3f)
\]
embeds $Y(I)$ as a closed subspace of
$L^{10}(I;L^{30/13}(\R^3;\mathbb C^4))$, with an equivalent norm.
Both mixed-norm exponents are at least two, so this space has
martingale type $2$;
the property passes to the closed range. The constants are independent
of the time measure of $I$. The second-moment inequality for a
Poisson integral in such a space therefore gives
\[
 \E\|W\|_{Y([s,b])}^2
 \le C\E\int_s^b\int_Z
   \|\mathbf1_{[r,b]}S(\cdot-r)\Phi(r,z)\|_{Y([s,b])}^2
                      \nu(\dd z)\dd r.
\]
The homogeneous estimate bounds each deterministic kernel by
\[
 \|\mathbf1_{[r,b]}S(\cdot-r)\Phi(r,z)\|_{Y(I)}^2
                 \le C\|\Phi(r,z)\|_{H^1(\R^3)}^2.
\]
Here $r$ indexes the stochastic integration and $t$ is the time
coordinate in $Y(I)$. The type-$2$ inequality takes a second moment
in probability of the entire $Y(I)$ norm. Thus the time exponent ten
requires no additional moment of $\nu$.
The second-moment inequality used here follows from Theorem~2.11 in
\cite{DirksenMaasVanNeerven2013}, with both its probability exponent
and its martingale-type exponent equal to two. The integrability
conditions for other probability exponents are treated
in \cite{Dirksen2014}; the application to
Schr\"odinger convolutions is developed in
\cite{BrzezniakLiuZhu2021}.

To identify the two versions, approximate $\Phi$ by simple predictable
$\Phi_n$ in the norm
\[
 \left(\E\int_s^b\int_Z
          \|\Phi_n(r,z)-\Phi(r,z)\|_{H^1(\R^3)}^2
                                     \nu(\dd z)\dd r\right)^{1/2}.
\]
The first estimate gives convergence of the corresponding convolutions
in expected squared uniform $H^1$ norm. The type-$2$ estimate gives
convergence in expected squared $Y$ norm. A subsequence converges
almost surely in both norms; their limits agree as distributions on
space-time because they agree for each approximation. Thus the
c\`adl\`ag $H^1$ version represents the same spacetime distribution
as the $Y$-valued integral. Combining the two estimates proves
\eqref{poisson:bound}.

For the random-interval assertion, set
\[
 W(t)=\int_{(\sigma,t]}\int_Z S(t-r)\Phi(r,z)
 \widetilde N(\dd r,\dd z),\qquad \sigma\le t\le\tau,
\]
and extend the integrand by zero outside $(\sigma,\tau]$. The process
$\mathbf1_{\{\sigma<r\le\tau\}}$ is adapted and left continuous in
$r$, hence predictable. On a deterministic enclosing interval the
preceding proof applies without alteration. Restrict the resulting
convolution norm to $[\sigma,\tau]$ to obtain the random-interval
estimate. For $A\in\mathcal F_\sigma$, the process
$\mathbf1_A\mathbf1_{\{\sigma<r\le\tau\}}$ is also predictable.
Applying the same estimate with this indicator yields
\[
 \E\bigl[\mathbf1_A\|W\|_{\mathcal X([\sigma,\tau])}^2\bigr]
 \le C\E\left[\mathbf1_A\int_\sigma^\tau\int_Z
        \|\Phi(r,z)\|_{H^1(\R^3)}^2\nu(\dd z)\dd r\right].
\]
Since this holds for every $A\in\mathcal F_\sigma$, it also gives
the conditional estimate with respect to $\mathcal F_\sigma$.
For unbounded endpoints first stop at an integer
time and then increase that time. The integral and the norm bounds
pass to the limit by monotone convergence, with the same constants.
\end{proof}

We use this predictable interval convention throughout the local
construction. Later, the continuation argument also selects times
after fixing a sample path; those times enter only increments of
already constructed processes.

\begin{lemma}\label{lem:potential}
Let $P\in L^1(0,T;W^{1,\infty}(\R^3;\R))$. The deterministic equation
\begin{equation}\label{potential:eq}
 \partial_t w=i(\Delta w-F(w))+iP(t)w,
       \qquad w(0)\in H^1(\R^3),
\end{equation}
has a unique solution in $C([0,T];H^1(\R^3))\cap Y([0,T])$.
Its mass is constant, and
\begin{equation}\label{potential:energy}
 E(w(t))-E(w(s))
    =\int_s^t\int_{\R^3}\nabla P(r,x)\cdot j(w(r,x))\dd x\dd r.
\end{equation}
\end{lemma}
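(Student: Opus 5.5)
The plan is to remove the potential by a gauge transform. This replaces the equation by a quintic NLS whose linear perturbation is small in the dual Strichartz space on short intervals. Existence and uniqueness then come from a contraction, continuation from energy control, and the identities from smooth approximation.

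\emph{Local theory.} Write the mild form
\[
 w(t)=S(t)w(0)-i\int_0^tS(t-r)F(w(r))\dd r+i\int_0^tS(t-r)P(r)w(r)\dd r .
\]
Since $P(t)$ is a bounded Lipschitz multiplier, $\|P(r)f\|_{H^1}\le C\|P(r)\|_{W^{1,\infty}}\|f\|_{H^1}$. By \eqref{str:lone}, the potential term is therefore bounded in $\mathcal X(I)$ by $C\|P\|_{L^1(I;W^{1,\infty})}\sup_I\|w\|_{H^1}$. The quintic term is handled by \eqref{str:dual} and Lemma~\ref{lem:quintic}. On an interval $I$ where $\|P\|_{L^1(I;W^{1,\infty})}$ and $\|S(\cdot)w(t_0)\|_{Y(I)}$ are small, the standard contraction in a ball of $\mathcal X(I)$ gives a unique solution, and uniqueness propagates. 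The potential smallness depends only on absolute continuity of the $L^1$ integral, independently of the data. The difficulty is the energy-critical blow-up alternative: the local time depends on the profile of the data, not only on its norm. I would therefore treat the potential term as the external function $\eta$ in Lemma~\ref{lem:stability} and compare with the global solution of \eqref{det:eq} given by Lemma~\ref{lem:detglobal}.

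\emph{Conservation laws.} I would next derive mass conservation and \eqref{potential:energy} for data in $H^3$ with $P$ smoothed in time and space, where the computations are classical. The mass derivative is $2\langle w,i\Delta w-iF(w)+iPw\rangle_{\mathrm{re}}=0$ because $P$ is real. The energy derivative is $\langle -\Delta w+F(w),\partial_tw\rangle_{\mathrm{re}}$. The NLS part vanishes, leaving $\langle -\Delta w+F(w),iPw\rangle_{\mathrm{re}}$. The term $\operatorname{Re}\int F(w)\overline{iPw}=0$ since $|w|^6P$ is real. For the Laplacian term, integrate by parts: $\operatorname{Re}\int\nabla w\cdot\overline{\nabla(iPw)}=\operatorname{Re}\int(-i)\overline w\,\nabla w\cdot\nabla P$, which equals $\int\nabla P\cdot j(w)$. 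The sign should be checked against the definition of $j$. I would pass to general data and $P$ using continuous dependence in $\mathcal X$, which follows from the same stability estimates. Both sides of \eqref{potential:energy} are continuous, because $|\int\nabla P\cdot j|\le\|\nabla P\|_\infty\|w\|_{L^2}\|\nabla w\|_{L^2}$.

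\emph{Global existence.} Mass conservation and \eqref{potential:energy} give
\[
 E(w(t))\le E(w(0))+\int_0^t\|\nabla P(r)\|_\infty\,M^{1/2}\,(2E(w(r)))^{1/2}\dd r .
\]
Gronwall then bounds $\sup\|w\|_{H^1}\le K$ on $[0,T]$. I would partition $[0,T]$ into finitely many intervals on which $\|P\|_{L^1(W^{1,\infty})}\le\delta_*(K)/(CK)$. On each one, apply Lemma~\ref{lem:stability} with $v$ the deterministic solution starting from $w(t_j)$ and $\eta$ the potential Duhamel term. Since $\|\eta\|_{\mathcal X}\le\delta_*$ is required a priori, run a continuity argument in the right endpoint. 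This yields finite $Y$ norms on each interval, which excludes blow-up, so the solution is global on $[0,T]$.

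\emph{Main obstacle.} I expect the main obstacle to be this a priori circularity. The energy bound used for $K$ is derived only for the solution already constructed, on its maximal interval. I would handle it by first fixing $K$ from the Gronwall bound, which holds on any subinterval of existence, and then applying stability up to the maximal time.
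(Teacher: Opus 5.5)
Your architecture matches the paper's. The local contraction estimates $iPw$ through \eqref{str:lone}. The conservation laws are computed for regular data and potentials and then passed to the limit. The energy identity gives the a priori $H^1$ bound, and Lemma~\ref{lem:stability} against the unforced solution bounds the $Y$ norm up to the maximal time.

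Drop the opening sentence about a gauge transform. You never use it, and it would fail here. For $W^{1,\infty}$ potentials, conjugation by $e^{i\Phi}$ with $\Phi=\int_0^tP\dd r$ generates $\Delta\Phi$, which is not a function. It also generates a first-order term $\nabla\Phi\cdot\nabla v$, which loses a derivative.

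\textbf{The gap: the approximation step.} You pass from smoothed to general $P$ by ``continuous dependence in $\mathcal X$''. You justify continuity of the right side of \eqref{potential:energy} by $|\int\nabla P\cdot j(w)\dd x|\le\|\nabla P\|_{L^\infty}\|w\|_{L^2}\|\nabla w\|_{L^2}$. As written, both are continuity statements in the norm of $L^1(0,T;W^{1,\infty})$, and smooth potentials are not dense there. If $\nabla V_j$ has a jump, which the bounded Lipschitz profiles of this paper allow, then $\|\nabla V_\ell-\nabla V_j\|_{L^\infty}$ stays bounded below for every smooth $V_\ell$. So neither the stability estimate nor your bound on the current term yields convergence of the regularized solutions or of their energy identities.

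\textbf{What is missing.} You need strong convergence of the multipliers on the fixed limit trajectory, not norm convergence. The paper splits $P_\ell w_\ell-Pw=P_\ell(w_\ell-w)+(P_\ell-P)w$.
\begin{enumerate}[label=(\roman*)]
\item The first term is absorbed using only uniform bounds. Since $\int_I\|P_\ell\|_{W^{1,\infty}}$ is small on short intervals uniformly in $\ell$, one partition serves all large $\ell$.
\item The second term is evaluated on the limit solution $w$. For $P=c(t)V(x)$ with $V_\ell$ a mollification, $\|(V_\ell-V)f\|_{H^1}\to0$ for each fixed $f$ by dominated convergence, because $V_\ell\to V$ and $\nabla V_\ell\to\nabla V$ almost everywhere under a uniform bound.
\item This convergence is uniform on the compact set $\{w(t):t\in I\}$, so $\|(P_\ell-P)w\|_{L^1(I;H^1)}\to0$.
\item General $P$ is then reached by Bochner approximation with time-simple functions with $W^{1,\infty}$ profiles. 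That is the only place where norm approximation is available.
\end{enumerate}
The current term is handled by the three-term splitting $c_\ell\nabla V_\ell\cdot(j(w_\ell)-j(w))+(c_\ell-c)\nabla V_\ell\cdot j(w)+c(\nabla V_\ell-\nabla V)\cdot j(w)$, with dominated convergence in the last piece.

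\textbf{Smaller points.}
\begin{enumerate}[label=(\roman*)]
\item The same bootstrap must show that the regularized solutions exist on each whole comparison interval.
\item ``Finite $Y$ norm excludes blow-up'' needs the interaction-representation argument. Via \eqref{str:dualhomogeneous} it gives an $H^1$ limit at the maximal time, from which the local contraction restarts.
\item The continuity argument in the right endpoint is unnecessary. Once $K$ is fixed, $\|\eta\|_{\mathcal X}\le CK\|P\|_{L^1(W^{1,\infty})}$ holds directly.
\end{enumerate}
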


\begin{proof}
Local existence follows by treating the potential as an integrable
linear perturbation. Multiplication obeys
\begin{equation}\label{potential:product}
 \|P(t)f\|_{H^1(\R^3)}
       \le C\|P(t)\|_{W^{1,\infty}(\R^3)}\|f\|_{H^1(\R^3)},
\end{equation}
as follows by expanding $\nabla(Pf)=P\nabla f+f\nabla P$.
For $f=w(s)$, the local integral map is
\[
 \mathcal T_Pw(t)=S(t-s)f-i\int_s^tS(t-r)F(w(r))\dd r
                              +i\int_s^tS(t-r)P(r)w(r)\dd r.
\]
Choose a small absolute $\eta_0>0$, and set
$K_f=2C_S\|f\|_{H^1(\R^3)}+1$, where $C_S\ge1$ bounds the
constants in Lemma~\ref{lem:strichartz}. By absolute continuity of the free
$Y$ integral and the potential integral, an interval $I=[s,s+h]$
can be chosen so that
\[
 \|S(\cdot-s)f\|_{Y(I)}\le\eta_0/2,\qquad
 C K_f\int_I\|P(r)\|_{W^{1,\infty}(\R^3)}\dd r\le\eta_0/4.
\]
On the closed set
\[
 \left\{w\in C(I;H^1(\R^3))\cap Y(I):
     \sup_{t\in I}\|w(t)\|_{H^1(\R^3)}\le K_f,
     \quad\|w\|_{Y(I)}\le2\eta_0\right\},
\]
the nonlinear term is bounded in $\mathcal X(I)$ by
$C(2\eta_0)^5$, and the potential term by the second displayed
bound. Decrease $\eta_0$ so that $C(2\eta_0)^5\le\eta_0/4$ and the
difference constant $C(4\eta_0)^4$ is at most $1/4$.
If necessary decrease $h$ once more so that the potential difference
constant is also at most $1/4$. The map preserves the indicated set
and is a contraction in $\mathcal X(I)$. This proves local existence
and uniqueness, with continuous $H^1$ paths.

For smooth coefficients and data, direct differentiation of mass gives
\[
 \frac{\mathrm d}{\mathrm dt}M(w(t))
   =2\operatorname{Re}\int_{\R^3}\overline w
                         (i\Delta w-i|w|^4w+iPw)\dd x=0.
\]
The Laplacian term is integrated by parts and all three resulting
integrals are purely imaginary. For energy, its real derivative is
\[
 E'(w)h=\operatorname{Re}\int_{\R^3}
                    \nabla\overline w\cdot\nabla h
                              +|w|^4\overline w h\dd x.
\]
Putting $h=i(\Delta w-F(w))$ gives zero: after one integration by
parts the integrand is the real part of
$-i\overline{(\Delta w-F(w))}(\Delta w-F(w))$.
The remaining derivative is
\[
 \operatorname{Re}\int_{\R^3}
       \nabla\overline w\cdot\nabla(iPw)+|w|^4\overline w(iPw)\dd x
       =\int_{\R^3}\nabla P\cdot j(w)\dd x.
\]
The terms containing $P|\nabla w|^2$ and $P|w|^6$ are purely imaginary.
This proves \eqref{potential:energy} for smooth solutions.

To justify the identities for $H^1$ data and the stated coefficient
regularity, we approximate the potential in two stages.
First suppose $P(t,x)=c(t)V(x)$, with $c\in L^1(0,T)$ and
$V\in W^{1,\infty}(\R^3;\R)$. This is the form needed for the
finite-activity equations. Let $V_\ell$ be real spatial mollifications.
Then
\[
 \sup_\ell\|V_\ell\|_{W^{1,\infty}(\R^3)}<\infty,
 \quad V_\ell\to V,\quad\nabla V_\ell\to\nabla V
                       \quad\text{almost everywhere}.
\]
For every fixed $f\in H^1(\R^3)$,
\begin{equation}\label{potential:strongmultiplier}
 \|(V_\ell-V)f\|_{H^1(\R^3)}\longrightarrow0.
\end{equation}
Indeed, the three terms in this norm are controlled by
$(V_\ell-V)f$, $(V_\ell-V)\nabla f$ and
$(\nabla V_\ell-\nabla V)f$ in $L^2$, and each converges by dominated
convergence. Uniform boundedness of the multipliers makes the
convergence uniform on compact subsets of $H^1$: take a finite net
of such a compact subset, use convergence at the net points, and then
use the common operator bound on the net error.

Approximate $c$ in $L^1(0,T)$ by smooth real $c_\ell$ and $f$ in
$H^1$ by smooth $f_\ell$. On a compact interval in the lifetime of
the local solution, its continuous $H^1$ trajectory is compact.
It follows from \eqref{potential:strongmultiplier} that
\begin{equation}\label{potential:forcing-convergence}
 \|(c_\ell V_\ell-cV)w\|_{L^1(I;H^1(\R^3))}\longrightarrow0.
\end{equation}
More explicitly, this norm is at most
\[
 \begin{split}
 &C\|c_\ell-c\|_{L^1(I)}
                   \sup_{t\in I}\|w(t)\|_{H^1(\R^3)}\\
 &\qquad+\|c\|_{L^1(I)}
                   \sup_{t\in I}\|(V_\ell-V)w(t)\|_{H^1(\R^3)},
 \end{split}
\]
where the constant uses the common multiplier bound. The first term
tends to zero by the approximation of $c$; the second tends to zero
by compactness of the trajectory and \eqref{potential:strongmultiplier}.

The required convergence of solutions follows from a local comparison.
Put $P_\ell=c_\ell V_\ell$
and partition a compact interval in the lifetime of $w$ into finitely
many intervals $I_j=[t_j,t_{j+1}]$ on which $\|w\|_{Y(I_j)}\le\gamma$
and the potential integrals are small. Such a partition can be used
for all sufficiently large $\ell$: the norms of $V_\ell$ are uniformly
bounded, and $c_\ell\to c$ in $L^1(0,T)$ makes the corresponding time
integrals uniformly absolutely continuous. Subtract the mild equations
and split the potential difference as
\[
 P_\ell w_\ell-Pw
       =P_\ell(w_\ell-w)+(P_\ell-P)w.
\]
On an initial part $[t_j,t]$ of $I_j$, the bootstrap assumption
$\|w_\ell-w\|_{Y([t_j,t])}\le\gamma$ implies
$\|w_\ell\|_{Y([t_j,t])}\le2\gamma$. Choose $\gamma$ and the
potential integrals so that the sum of the nonlinear and potential
difference constants is at most $1/2$. Lemma~\ref{lem:quintic} and
\eqref{potential:product} then give
\[
 \begin{split}
 \|w_\ell-w\|_{\mathcal X([t_j,t])}
 \le{}&C\|w_\ell(t_j)-w(t_j)\|_{H^1(\R^3)}
          +\tfrac12\|w_\ell-w\|_{\mathcal X([t_j,t])}\\
      &+C\|(P_\ell-P)w\|_{L^1(I_j;H^1(\R^3))}.
 \end{split}
\]
Absorbing the middle term gives twice the sum of the two error terms.
For sufficiently large $\ell$, this is smaller than $\gamma/2$,
which closes the bootstrap by continuity. If a regularized local
lifetime ended inside $I_j$ while these bounds held, its mild formula
would give an $H^1$ limit there: the quintic Duhamel integral is Cauchy
by \eqref{str:dualhomogeneous}, and the potential integral is absolutely
integrable in $H^1$. The local contraction from that limit would extend
the solution. Thus the comparison reaches $t_{j+1}$ and gives convergence
of the endpoint data. Induction over the finite partition proves
convergence in $C(I;H^1(\R^3))\cap Y(I)$.
For smooth regularizations, persistence of higher Sobolev regularity
justifies the differential identities above on these intervals;
only the $H^1$ and critical spacetime bounds are used in passing to
the limit. The deterministic persistence argument is given in
\cite{Cazenave2003,Tao2006}.

For a general real $P\in L^1(0,T;W^{1,\infty}(\R^3))$, approximate first
by finite sums of time-simple functions with real $W^{1,\infty}$
profiles in this Bochner norm. Apply the preceding spatial argument
to each profile. The product estimate bounds the error of the first
approximation by its $L^1(0,T;W^{1,\infty}(\R^3))$ norm times the bounded
$H^1$ trajectory. A diagonal approximation then gives the same local
convergence. Thus the argument covers the full statement of the lemma.

To pass to the limit in the energy formula, use
\[
 \|j(f)-j(g)\|_{L^1(\R^3)}
 \le\|f-g\|_{L^2(\R^3)}\|\nabla f\|_{L^2(\R^3)}
       +\|g\|_{L^2(\R^3)}\|\nabla(f-g)\|_{L^2(\R^3)}.
\]
Hence the currents of the regularized solutions converge uniformly
in $L^1(\R^3)$ on every compact local interval. For $P=cV$, the
difference of the integrated currents has the following three terms:
\[
 \begin{split}
 &\int_I\int_{\R^3}
       \bigl(\nabla P_\ell\cdot j(w_\ell)-\nabla P\cdot j(w)\bigr)
                                      \dd x\dd t\\
 &\quad=\int_I\int_{\R^3}
       c_\ell\nabla V_\ell\cdot\bigl(j(w_\ell)-j(w)\bigr)\dd x\dd t\\
 &\qquad+\int_I\int_{\R^3}
       (c_\ell-c)\nabla V_\ell\cdot j(w)\dd x\dd t\\
 &\qquad+\int_I\int_{\R^3}
       c(\nabla V_\ell-\nabla V)\cdot j(w)\dd x\dd t.
 \end{split}
\]
The absolute values of the first two terms are bounded, respectively, by
\[
 \begin{gathered}
 C\|c_\ell\|_{L^1(I)}
       \sup_{t\in I}\|j(w_\ell(t))-j(w(t))\|_{L^1(\R^3)},\\
 C\|c_\ell-c\|_{L^1(I)}
       \sup_{t\in I}\|j(w(t))\|_{L^1(\R^3)}.
 \end{gathered}
\]
Both tend to zero. In the third term, dominated convergence applies
with the fixed integrable majorant
$2\|\nabla V\|_{L^\infty(\R^3)}|c(t)|\,|j(w(t,x))|$.
The temporal coefficients converge in $L^1$, while the spatial
derivatives converge almost everywhere under a uniform bound. For a
finite sum of separated potentials, apply this argument term by term. The
remaining general-potential approximation is controlled in its Bochner
$L^1(0,T;W^{1,\infty}(\R^3))$ norm, using
\[
 \left|\int_{\R^3}\nabla P\cdot j(w)\dd x\right|
 \le \|\nabla P\|_{L^\infty(\R^3)}
       \|w\|_{L^2(\R^3)}\|\nabla w\|_{L^2(\R^3)}.
\]

The energy functional is continuous under the uniform $H^1$ convergence
just obtained. Thus both endpoints of the energy formula also
converge, proving \eqref{potential:energy}. The mass identity passes
to the limit by the same argument.

The energy identity yields an a priori bound independent of the
local critical norm. With $M=M(w(0))$, it implies, almost everywhere,
\[
 \left|\frac{\mathrm d}{\mathrm dt}E(w(t))\right|
 \le\sqrt{2M E(w(t))}\,
                         \|\nabla P(t)\|_{L^\infty(\R^3)}.
\]
For $\eps>0$, divide by $2\sqrt{E(w(t))+\eps}$ and integrate.
Since $\sqrt{E/(E+\eps)}\le1$, letting $\eps\downarrow0$ yields
\[
 \sqrt{E(w(t))}\le\sqrt{E(w(0))}
       +\sqrt{M/2}\int_0^t\|\nabla P(r)\|_{L^\infty(\R^3)}\dd r.
\]
Together with mass conservation, this bounds $w$ in $H^1$ up to any
possible finite maximal time $\tau\le T$. Denote that bound by $K$.
The product estimate gives
\[
 \|Pw\|_{L^1(0,\tau;H^1(\R^3))}<\infty.
\]
More precisely, by \eqref{str:lone},
\[
 \begin{split}
 \left\|i\int_s^tS(t-r)P(r)w(r)\dd r
                               \right\|_{\mathcal X([s,\tau))}\\
 \le CK\int_s^\tau\|P(r)\|_{W^{1,\infty}(\R^3)}\dd r
                        \longrightarrow0\quad(s\uparrow\tau).
 \end{split}
\]
Choose $s$ so that this quantity is at most $\delta_*(K)$ and compare
on $[s,b]$, $b<\tau$, with the unforced solution from $w(s)$.
Lemma~\ref{lem:stability} gives a bound for $\|w\|_{Y([s,b])}$
independent of $b$. Increase $b$ to conclude
$\|w\|_{Y([0,\tau))}<\infty$.

Finite critical norm supplies the endpoint datum needed for
continuation. In the interaction representation,
\[
 S(-t)w(t)=w(0)-i\int_0^tS(-r)F(w(r))\dd r
                           +i\int_0^tS(-r)P(r)w(r)\dd r.
\]
The first integral has an $H^1$ limit as $t\uparrow\tau$ by
\eqref{F:single}, \eqref{str:dualhomogeneous}, and the finite $Y$
norm. The second has a limit by its absolute $H^1$ integrability.
Thus $w(t)$ has an $H^1$ limit at $\tau$. Starting the local
contraction from this limit extends the solution if $\tau<T$, and
includes the endpoint if $\tau=T$. The continuation therefore follows
from the energy bound together with the deterministic stability
estimate and finite critical norm.
\end{proof}

\section{Local solutions and finite-activity approximation}\label{sec:local}

The construction uses two continuous accumulated quantities: the
noise intensity and the critical spacetime norm. They determine
stopping times on which the mild equation is contractive in a
second-moment process norm. The same estimates give local convergence
of the finite-activity equations before any energy identity is passed
to the limit. Energy estimates enter only in the next section.

Choose a nonincreasing Lipschitz function
$\chi:[0,\infty)\to[0,1]$ equal to one on $[0,1]$ and zero on
$[2,\infty)$. For $\eta>0$ and an interval starting at $s$, put
\begin{equation}\label{cutoff:def}
 \mathcal F_{s,\eta}(u)(t)
   =\chi\bigl(\eta^{-1}\|u\|_{Y([s,t])}\bigr)F(u(t)).
\end{equation}
For an adapted c\`adl\`ag $H^1$ process belonging to $Y(I)$, the
accumulated norm is continuous and adapted, and the cutoff is therefore
predictable. Measurability follows by viewing the spatial Sobolev norm
as an extended Borel function on $H^1$ and integrating in time.
The cutoff has no spatial dependence, so spatial differentiation
produces no additional terms.

\begin{lemma}\label{lem:cutoff}
For every interval $I=[s,b]$ and $u,v\in Y(I)$,
\begin{equation}\label{cutoff:lip}
 \|\mathcal F_{s,\eta}(u)-\mathcal F_{s,\eta}(v)\|_{\mathcal N(I)}
           \le C\eta^4\|u-v\|_{Y(I)}.
\end{equation}
In addition,
\begin{equation}\label{cutoff:single}
 \|\mathcal F_{s,\eta}(u)\|_{\mathcal N(I)}
      \le C\eta^5.
\end{equation}
The constants depend only on $\chi$ and the constants in
Lemma~\ref{lem:quintic}.
\end{lemma}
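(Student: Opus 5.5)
The plan is to localize in time to the region where the cutoffs are active and then apply Lemma~\ref{lem:quintic} on that region. Fix $u,v\in Y(I)$. Write $\chi_u(t)=\chi(\eta^{-1}\|u\|_{Y([s,t])})$, and define $\chi_v$ in the same way. Put
\[
 t_u=\sup\{t\in I:\|u\|_{Y([s,t])}\le2\eta\},
\]
and define $t_v$ analogously. The accumulated norm $t\mapsto\|u\|_{Y([s,t])}$ is continuous and nondecreasing. Hence $\|u\|_{Y([s,t_u])}\le2\eta$, and $\chi_u(t)=0$ for $t>t_u$, because $\chi$ vanishes on $[2,\infty)$. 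By symmetry of \eqref{cutoff:lip} in $u$ and $v$, we may assume $t_u\le t_v$. Then both cutoffs vanish after $t_v$, and only $[s,t_v]$ contributes to the $\mathcal N(I)$ norm.

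We split the difference as
\[
 \mathcal F_{s,\eta}(u)-\mathcal F_{s,\eta}(v)
 =\chi_u\bigl(F(u)-F(v)\bigr)+(\chi_u-\chi_v)F(v).
\]
This identity holds on all of $[s,t_v]$. The first term is supported in $[s,t_u]$ and has $0\le\chi_u\le1$. Since multiplication by a bounded scalar function of time does not increase the $\mathcal N$ norm, \eqref{F:diff} on $[s,t_u]$ bounds it by
\[
 C\bigl(\|u\|_{Y([s,t_u])}+\|v\|_{Y([s,t_u])}\bigr)^4\|u-v\|_{Y(I)}.
\]
Here $\|u\|_{Y([s,t_u])}\le2\eta$ by definition of $t_u$. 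Because $t_u\le t_v$, we also have $\|v\|_{Y([s,t_u])}\le\|v\|_{Y([s,t_v])}\le2\eta$. The first term is therefore at most $C\eta^4\|u-v\|_{Y(I)}$.

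For the second term, Lipschitz continuity of $\chi$ and the reverse triangle inequality give, for each $t$,
\[
 |\chi_u(t)-\chi_v(t)|\le\operatorname{Lip}(\chi)\,\eta^{-1}\|u-v\|_{Y([s,t])}\le C\eta^{-1}\|u-v\|_{Y(I)}.
\]
This factor is a uniform-in-time scalar bound, so the $\mathcal N$ norm of the second term is at most $C\eta^{-1}\|u-v\|_{Y(I)}\,\|F(v)\|_{\mathcal N([s,t_v])}$. By \eqref{F:single}, $\|F(v)\|_{\mathcal N([s,t_v])}\le C(2\eta)^5$, so the second term is also bounded by $C\eta^4\|u-v\|_{Y(I)}$, which proves \eqref{cutoff:lip}. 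For \eqref{cutoff:single}, take $v=0$; alternatively, apply \eqref{F:single} directly on $[s,t_u]$ with $\chi_u\le1$, giving $C(2\eta)^5$.

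The only delicate step is the choice of ordering $t_u\le t_v$. This ordering is what lets $v$ be controlled on the support of $\chi_u$ in the first term. It also confines the second term to $[s,t_v]$, where $F(v)$ is small. Because of it, the two paths need not exhaust their cutoff budgets at the same time. The pointwise-in-time Lipschitz bound on $\chi_u-\chi_v$ uses the full-interval difference norm, and this produces the factor $\eta^{-1}$ that is compensated by $\eta^5$. All constants depend only on $\chi$ and on Lemma~\ref{lem:quintic}, so they are independent of $I$.
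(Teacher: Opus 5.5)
Your proof is correct. It reaches the estimate by a slightly different route from the paper's.

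The paper compares the accumulated norms pointwise. With $A_u(t)=\|u\|_{Y([s,t])}$, it partitions $I$ into $E=\{A_u\le A_v\}$ and $I\setminus E$. On $E$ it uses the split $\chi(A_v/\eta)(F(u)-F(v))+(\chi(A_u/\eta)-\chi(A_v/\eta))F(u)$. It then reruns the H\"older estimates behind Lemma~\ref{lem:quintic} with indicators of measurable time sets such as $E_1=E\cap\{A_v<2\eta\}$, because that lemma is stated only on intervals.

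You instead order the exit times $t_u\le t_v$ once, globally. The accumulated norms are nondecreasing, so every support that matters is an initial interval, $[s,t_u]$ or $[s,t_v]$, on which the relevant path has $Y$ norm at most $2\eta$. This lets you cite \eqref{F:diff} and \eqref{F:single} directly, with no measurable-set bookkeeping. The structure is otherwise the same in both proofs. The cutoff multiplying $F(u)-F(v)$ belongs to the path that is ahead: pointwise in the paper, at the exit time in yours. The Lipschitz term carries $F$ of the other path and is controlled by the reverse triangle inequality for the accumulated norms.

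One small correction concerns \eqref{cutoff:single}. Setting $v=0$ in the final inequality \eqref{cutoff:lip} gives only $C\eta^4\|u\|_{Y(I)}$. That is not $O(\eta^5)$ when $\|u\|_{Y(I)}$ is large. Your alternative is the right argument: apply \eqref{F:single} on $[s,t_u]$ using $0\le\chi_u\le1$. This is also what the paper does. Keeping the restricted factor $\|u-v\|_{Y([s,t_u])}$ inside your proof with $v=0$ would work as well.
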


\begin{proof}
Write $A_u(t)=\|u\|_{Y([s,t])}$ and define $A_v$ in the same way.
Minkowski's inequality gives, for every $t\in I$,
\begin{equation}\label{cutoff:cumulative}
 |A_u(t)-A_v(t)|
       \le\|u-v\|_{Y([s,t])}\le\|u-v\|_{Y(I)}.
\end{equation}
If $E_u=\{t\in I:A_u(t)<2\eta\}$, monotonicity and continuity of
$A_u$ imply
\[
 \|\mathbf1_{E_u}u\|_{Y(I)}^{10}
     =\int_{E_u}\|u(t)\|_{W^{1,30/13}(\R^3)}^{10}\dd t
     \le(2\eta)^{10}.
\]
The same conclusion holds with $E_u$ replaced by $\{A_u\le2\eta\}$,
since the cumulative integral is constant on any interval where
$A_u=2\eta$.
The support of $\mathcal F_{s,\eta}(u)$ is contained in $E_u$, up to
such an endpoint. Equation~\eqref{F:single} therefore proves
\eqref{cutoff:single}.

For the difference estimate, divide $I$ into the measurable sets
$E=\{A_u\le A_v\}$ and $I\setminus E$. On $E$ we use the identity
\begin{equation}\label{cutoff:split}
 \mathcal F_{s,\eta}(u)-\mathcal F_{s,\eta}(v)
  =\chi(A_v/\eta)(F(u)-F(v))
   +\bigl(\chi(A_u/\eta)-\chi(A_v/\eta)\bigr)F(u).
\end{equation}
In the first term both accumulated norms are at most $2\eta$
wherever its coefficient is nonzero. Denote this smaller time set by
$E_1=E\cap\{A_v<2\eta\}$. Then
\[
 \|\mathbf1_{E_1}u\|_{Y(I)}
       +\|\mathbf1_{E_1}v\|_{Y(I)}\le4\eta.
\]
For the differentiated part of the first term, the pointwise bounds
in Lemma~\ref{lem:quintic} reduce the estimate to the
$L^2(I;L^{6/5}(\R^3))$ norms of
\[
 \mathbf1_{E_1}(|u|+|v|)^4|\nabla(u-v)|,\qquad
 \mathbf1_{E_1}(|u|+|v|)^3
               (|\nabla u|+|\nabla v|)|u-v|.
\]
H\"older's inequality places four undifferentiated factors in
$L^{10}(I\times\R^3)$ and the differentiated factor in
$L^{10}(I;L^{30/13}(\R^3))$. The four factors belonging to $u$ or $v$
on $E_1$ contribute at most $C\eta^4$; the difference factor contributes
$C\|u-v\|_{Y(I)}$. The undifferentiated expression is estimated
with the same exponents. Consequently,
\[
 \|\mathbf1_E\chi(A_v/\eta)(F(u)-F(v))\|_{\mathcal N(I)}
       \le C\eta^4\|u-v\|_{Y(I)}.
\]

For the second term in \eqref{cutoff:split}, a nonzero coefficient on
$E$ implies $A_u<2\eta$. The Lipschitz property of $\chi$ and
\eqref{cutoff:cumulative} give
\[
 \sup_{t\in I}|\chi(A_u(t)/\eta)-\chi(A_v(t)/\eta)|
       \le C\eta^{-1}\|u-v\|_{Y(I)}.
\]
Restriction to $\{A_u<2\eta\}$ and \eqref{F:single} now yield
\[
 \begin{split}
 &\|\mathbf1_E(\chi(A_u/\eta)-\chi(A_v/\eta))F(u)\|_{\mathcal N(I)}
 \\
 &\hspace{20mm}\le
 C\eta^{-1}\|u-v\|_{Y(I)}(2\eta)^5
 \le C\eta^4\|u-v\|_{Y(I)}.
 \end{split}
\]
On $I\setminus E$, interchange $u$ and $v$ in this decomposition.
Adding the two time restrictions proves \eqref{cutoff:lip}.
Only spatial derivatives enter $\mathcal N(I)$, so the measurable
time restrictions introduce no additional terms.
\end{proof}

\begin{proposition}\label{prop:local}
There is a pathwise unique maximal local mild solution $(u,\tau^*)$
of \eqref{main:mild}. It belongs to $\mathcal X([0,t])$ for every
$t<\tau^*$, almost surely. Its lifetime is announced by an increasing
sequence of stopping times $\sigma_k$, and
\begin{equation}\label{blowup:alternative}
 \{\tau^*<\infty\}\subset
 \{\|u\|_{Y([0,\tau^*))}=\infty\}
 \quad\text{up to a null set}.
\end{equation}
The construction can be restarted at a stopping time $\sigma$ from
an $\mathcal F_\sigma$-measurable, almost surely finite $H^1$ datum.
\end{proposition}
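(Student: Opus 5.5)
The construction is a contraction argument for a truncated equation, followed by removal of the cutoff through stopping times and gluing of successive restarts. Fix a stopping time $\sigma$ and an $\mathcal F_\sigma$-measurable datum $f$. Since we need second moments, we first localize to $\{\|f\|_{H^1}\le R\}\in\mathcal F_\sigma$; the pieces are recombined at the end because this localization commutes with the stochastic integrals.

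For $\eta>0$ and a deterministic horizon $h$, consider the truncated equation
\[
 u(t)=S(t-\sigma)f-i\int_\sigma^t S(t-r)\mathcal F_{\sigma,\eta}(u)(r)\dd r
 +\int_{(\sigma,t]}\int_Z S(t-r)G_{r,z}u(r-)\,\widetilde N(\dd r,\dd z)
 +\int_\sigma^t S(t-r)B(r)u(r)\dd r
\]
on $[\sigma,\rho]$. Here $\rho\le\sigma+h$ is chosen as the first time at which $\int_\sigma^t a(r)^2\dd r$ reaches a small level $\kappa$. This time is deterministic given $\sigma$, hence a stopping time. We work in the process norm $(\E\|u\|_{\mathcal X([\sigma,\rho])}^2)^{1/2}$ on adapted c\`adl\`ag processes.

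The four terms are estimated as follows:
\begin{itemize}
\item Lemma~\ref{lem:strichartz} bounds the free term.
\item Lemma~\ref{lem:cutoff} together with \eqref{str:dual} gives a Lipschitz constant $C\eta^4$ for the quintic term, pathwise, hence also in second moment.
\item Lemma~\ref{lem:poisson} on the stochastic interval, with \eqref{G:bound}, gives Lipschitz constant $C C_V(Q\kappa)^{1/2}$ for the Poisson term.
\item \eqref{str:lone} with \eqref{B:bound} gives Lipschitz constant $CC_VQ\kappa$ for the compensator term.
\end{itemize}
Choosing $\eta$ and $\kappa$ small (absolute, depending only on $C_V,Q$) makes the map a contraction, which yields a unique fixed point $u_\eta$. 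The point of the cutoff, as stressed in Lemma~\ref{lem:cutoff}, is that the difference estimate holds even though $u$ and $v$ reach level $\eta$ at different times.

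Next define $\sigma_1=\inf\{t\ge\sigma:\|u_\eta\|_{Y([\sigma,t])}\ge\eta\}\wedge\rho$. This is a stopping time because the accumulated norm is continuous and adapted. Before $\sigma_1$ the cutoff equals one, so $u_\eta$ solves \eqref{main:mild} on $[\sigma,\sigma_1]$, including the jump at $\sigma_1$, since the restart convention uses $(s,t]$.

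Local pathwise uniqueness comes from a comparison. Given any other mild solution $v$ in the class, stop at the first time either accumulated $Y$ norm reaches $\eta$, or $\|v\|_{H^1}$ exceeds a level $n$; the $H^1$ localization is needed for the Poisson integrability \eqref{mild:local-integrability}. Up to that time both $u_\eta$ and $v$ solve the truncated equation, so the contraction estimate on the stopped interval gives $u_\eta=v$. Letting $n\to\infty$ removes the $H^1$ localization.

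Iteration then restarts from $u(\sigma_k)$, which is $\mathcal F_{\sigma_k}$-measurable and in $H^1$, producing an increasing sequence $\sigma_k$. Define $\tau^*=\lim\sigma_k$; uniqueness makes the glued process consistent. Each restart either consumes $\eta^{10}$ of the tenth power of the $Y$ norm, or a fixed amount $\kappa$ of $\int a^2$, or a horizon $h$. On $\{\tau^*<\infty\}$ only finitely many steps of the second and third kind can occur, because $a\in L^2_{\rm loc}$. Hence infinitely many steps of the first kind occur, so $\|u\|_{Y([0,\tau^*))}=\infty$, which is \eqref{blowup:alternative}.

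Maximality is not immediate here: a steps-based limit is not automatically maximal. One has to check that any solution on a longer interval agrees with the glued one and forces finitely many $Y$-steps before its endpoint. This follows from the same uniqueness argument combined with the finite $Y$ norm of that solution.

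The main obstacle I expect is the measurability and localization bookkeeping, rather than the estimates themselves. Two points need care. First, the contraction must run in a second-moment norm while the datum is only almost surely finite; the $\mathcal F_\sigma$ localization, and the conditional form of Lemma~\ref{lem:poisson}, are the tools for this. Second, the uniqueness comparison stopping times must be genuine stopping times compatible with the random-interval Poisson estimate.
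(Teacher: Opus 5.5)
Your proposal is correct and follows essentially the same route as the paper. The shared steps are:
\begin{itemize}
\item a cutoff-truncated contraction in the second-moment $\mathcal X$-norm on stochastic intervals capped by the noise clock $A$ and a fixed horizon;
\item locality and pasting over $\mathcal F_\sigma$-events for almost surely finite data;
\item restarts at the first $Y$-exit, and the step-counting argument for \eqref{blowup:alternative};
\item stopped comparison for uniqueness, and maximality by contradiction with the blow-up alternative.
\end{itemize}
The paper states one point that you leave implicit. The $H^1$-stopped competitor $v$ may overshoot at the stopping jump. Because the Poisson integrand uses left limits and the time integrals ignore a single instant, the mild equation still places the stopped path in the second-moment space where the contraction applies.
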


\begin{proof}
We begin with a deterministic interval $I=[s,b]$ and an
$\mathcal F_s$-measurable datum $f$ such that
$\E\|f\|_{H^1(\R^3)}^2<\infty$. The process space for the contraction
consists of adapted c\`adl\`ag processes with norm
\begin{equation}\label{local:processnorm}
 \|u\|_{\mathscr B(I)}
    =\bigl(\E\|u\|_{\mathcal X(I)}^2\bigr)^{1/2}.
\end{equation}
Here $\mathcal X(I)$ is the pathwise norm, whereas
$\mathscr B(I)$ includes the second moment in probability. Processes
are identified up to indistinguishability. Both terms in the pathwise
norm are measurable: the supremum can be taken over a countable dense
set together with the endpoints, and the spacetime term is a measurable
time integral. Thus this definition does not require Bochner strong
measurability in the uniform topology of all c\`adl\`ag paths.

For completeness, from a Cauchy sequence
choose a subsequence $(u_\ell)$ with
$\sum_\ell\|u_{\ell+1}-u_\ell\|_{\mathscr B(I)}<\infty$.
Minkowski's inequality implies that
$\sum_\ell\|u_{\ell+1}-u_\ell\|_{\mathcal X(I)}<\infty$
almost surely. Hence this subsequence converges uniformly in $H^1$
and also in $Y(I)$, on a common set of probability one. Uniform
limits of c\`adl\`ag functions are c\`adl\`ag. The limit is adapted,
and the two limits agree as functions for almost every time.
Fatou's lemma applied to differences gives convergence in
\eqref{local:processnorm}. This proves completeness. Left limits of
these adapted processes are predictable; these are the values used
in the Poisson integrals.

Replace $F$ by $\mathcal F_{s,\eta}$ in the mild equation and define
\[
 \begin{split}
 (\mathcal Tu)(t)={}&S(t-s)f
   -i\int_s^tS(t-r)\mathcal F_{s,\eta}(u)(r)\dd r\\
 &+\int_{(s,t]}\int_ZS(t-r)G_{r,z}u(r-)\widetilde N(\dd r,\dd z)
   \\
 &+\int_s^tS(t-r)B(r)u(r)\dd r.
 \end{split}
\]
Denote the nonlinear Duhamel term, the Poisson convolution and the
compensator term in this formula by $\mathcal T_Fu$, $\mathcal T_Gu$
and $\mathcal T_Bu$, respectively. Thus
$\mathcal Tu=S(\cdot-s)f+\mathcal T_Fu+\mathcal T_Gu+\mathcal T_Bu$;
the factor $-i$ is included in $\mathcal T_F$.
Put $q_I=Q\int_s^b a(r)^2\dd r$. Lemma~\ref{lem:cutoff} and the
deterministic Strichartz estimate give
\[
 \begin{split}
 \E\|\mathcal T_Fu-\mathcal T_Fv\|_{\mathcal X(I)}^2
 &\le C\E
  \|\mathcal F_{s,\eta}(u)-\mathcal F_{s,\eta}(v)\|_{\mathcal N(I)}^2\\
 &\le C\eta^8\E\|u-v\|_{Y(I)}^2.
 \end{split}
\]
For the stochastic term, Lemma~\ref{lem:poisson} and
\eqref{G:bound} imply
\[
 \begin{split}
 \E\|\mathcal T_Gu-\mathcal T_Gv\|_{\mathcal X(I)}^2
 &\le C\E\int_s^b\int_Z
        \|G_{r,z}(u(r-)-v(r-))\|_{H^1(\R^3)}^2\nu(\dd z)\dd r\\
 &\le C_VQ\E\int_s^b a(r)^2
              \|u(r-)-v(r-)\|_{H^1(\R^3)}^2\dd r\\
 &\le C_Vq_I\E\|u-v\|_{\mathcal X(I)}^2.
 \end{split}
\]
For the compensator, first estimate pathwise:
\[
 \begin{split}
 \|\mathcal T_Bu-\mathcal T_Bv\|_{\mathcal X(I)}
 &\le C\int_s^b\|B(r)(u(r)-v(r))\|_{H^1(\R^3)}\dd r\\
 &\le C_VQ\int_s^b a(r)^2\dd r\,
               \sup_{s\le r\le b}\|u(r)-v(r)\|_{H^1(\R^3)}.
 \end{split}
\]
Taking its second moment and combining the three inequalities yields
\begin{equation}\label{contraction:constant}
 \|\mathcal Tu-\mathcal Tv\|_{\mathscr B(I)}
 \le\bigl(C\eta^4+C_Vq_I^{1/2}+C_Vq_I\bigr)
                                 \|u-v\|_{\mathscr B(I)}.
\end{equation}
Choose $\eta>0$ so that $C\eta^4\le1/4$, and then choose $\delta>0$
so that $C_V\delta^{1/2}+C_V\delta\le1/4$.
These choices depend only on the profile bounds, the cutoff, and the
constants in Section~\ref{sec:analytic}. They are independent of the
initial datum, the starting time, and the mark truncation, and remain
fixed throughout the construction.

All three nonlinear or stochastic terms vanish when the input is
zero. The free term has $\mathscr B(I)$ norm at most
$C(\E\|f\|_{H^1(\R^3)}^2)^{1/2}$. Thus $\mathcal T$ maps
$\mathscr B(I)$ into itself and is a contraction whenever
$q_I\le\delta$. Its fixed point satisfies
\begin{equation}\label{local:truncatedbound}
 \E\|u\|_{\mathcal X(I)}^2
       \le C\E\|f\|_{H^1(\R^3)}^2.
\end{equation}
The same computation for two fixed points gives
\begin{equation}\label{local:databound}
 \E\|u_f-u_g\|_{\mathcal X(I)}^2
       \le C\E\|f-g\|_{H^1(\R^3)}^2.
\end{equation}
The size of the initial $H^1$ norm enters these bounds but does not
enter the contraction constant.

To restart the construction at a stopping time $\sigma$, define
\begin{equation}\label{local:clockstop}
 \beta(\sigma)
  =(\sigma+1)\wedge
       \inf\{t\ge\sigma:A(t)-A(\sigma)\ge\delta\},
 \qquad \inf\varnothing=\infty.
\end{equation}
The deterministic clock $A$ from \eqref{clock:def} is continuous
and nondecreasing. Consequently $\beta(\sigma)$ is a stopping time,
$\beta(\sigma)>\sigma$ when $\sigma<\infty$, and
\[
 Q\int_\sigma^{\beta(\sigma)}a(r)^2\dd r\le\delta.
\]
Indeed, the event that the clock exit has occurred by $t$ is
$\{\sigma\le t,\ A(t)-A(\sigma)\ge\delta\}$, which belongs to
$\mathcal F_t$; the other endpoint, $\sigma+1$, is also a stopping time.

On $[\sigma,\beta(\sigma)]$, use the expectation of the squared
pathwise $\mathcal X$ norm as in \eqref{local:processnorm}. A process
with initial value $f$ on this stochastic interval may be extended by
zero before $\sigma$ and by its free evolution after the right
endpoint. This convention supplies adapted c\`adl\`ag representatives
on a deterministic time domain. In the integrals use
$\mathbf1_{(\sigma,\beta(\sigma)]}(r)$. This is a predictable
indicator: for each path it is left-continuous as a function of $r$,
and it is adapted. The stochastic Strichartz estimate can therefore
be applied to the extended integrand and then restricted to the
stochastic interval.

The noise-clock bound is applied pathwise before taking expectation:
\[
 \begin{split}
 &Q\E\int_\sigma^{\beta(\sigma)}a(r)^2
             \|u(r-)-v(r-)\|_{H^1(\R^3)}^2\dd r\\
 &\quad\le
 \E\left[
 Q\int_\sigma^{\beta(\sigma)}a(r)^2\dd r\,
 \sup_{\sigma\le r\le\beta(\sigma)}
                     \|u(r)-v(r)\|_{H^1(\R^3)}^2\right]\\
 &\quad\le\delta\,\E\|u-v\|_{\mathcal X([\sigma,\beta(\sigma)])}^2.
 \end{split}
\]
The drift estimate uses the same pathwise bound, and the free
Strichartz estimate is valid after every time translation, with the
same constant. Completeness follows by the subsequence argument
already given, restricted to the stochastic interval. Consequently
\eqref{local:truncatedbound} and \eqref{local:databound} hold also
from a stopping time. If needed, first restrict to
$\{\sigma\le m\}$ and a bounded deterministic terminal time and then
increase $m$.

The fixed point is local with respect to events known at the starting
time: if $E\in\mathcal F_\sigma$ and two initial data coincide on
$E$, their truncated solutions coincide on $E$. To see this, paste
one solution on $E$ and the other on its complement. The pasted
process satisfies the equation with the pasted initial datum,
since $\mathbf1_E\mathbf1_{(\sigma,\beta(\sigma)]}$ is predictable
for the Poisson integral, and the deterministic integrals and cutoff
are evaluated pathwise. Uniqueness of the fixed point gives the assertion.
For an almost surely finite datum $f$, solve with
$\mathbf1_{\{\|f\|_{H^1(\R^3)}\le m\}}f$. Locality makes these
solutions agree on the increasing events
$\{\|f\|_{H^1(\R^3)}\le m\}$. Pasting them constructs the truncated
solution without a moment assumption on $f$.

Starting with $\sigma_0=0$, let $v_k$ be the truncated solution from
$u(\sigma_k)$ on $[\sigma_k,\beta(\sigma_k)]$, and put
\begin{equation}\label{local:criticalstop}
 \sigma_{k+1}
 =\beta(\sigma_k)\wedge
   \inf\{t\in[\sigma_k,\beta(\sigma_k)]:
              \|v_k\|_{Y([\sigma_k,t])}\ge\eta\}.
\end{equation}
The accumulated norm is continuous and adapted, so
$\sigma_{k+1}$ is a stopping time. On this interval the cutoff equals
one, including its right endpoint. Define $u=v_k$ there and use the
post-jump value
$v_k(\sigma_{k+1})$ as the next initial datum. Both exit conditions
involve continuous accumulated integrals. Their values vanish at
$\sigma_k$, so $\sigma_{k+1}>\sigma_k$ almost surely. Additivity of the
integrals and the identity $S(t-r)S(r-s)=S(t-s)$ show that the
concatenated path satisfies the original mild equation. A jump at a
joining time is included in the preceding stochastic integral; the
following integral is open at its left endpoint.

Let $\tau^*=\lim_k\sigma_k$. The concatenation defines an adapted
c\`adl\`ag solution on every compact interval before $\tau^*$.
If $\tau^*\le T<\infty$, at most $A(T)/\delta$ completed steps can
use $\delta$ units of the noise clock, and at most $T$ steps can
have length one. Every other completed step has accumulated
$Y$ norm $\eta$. Since its tenth power is additive over disjoint
time intervals, infinitely many steps imply
\[
 \|u\|_{Y([0,\tau^*))}^{10}
   \ge\sum_{\text{critical exits}}\eta^{10}=\infty.
\]
This proves \eqref{blowup:alternative}. The strict increase also
gives $\sigma_k<\tau^*$ on $\{\tau^*<\infty\}$, so this sequence
announces the lifetime. More quantitatively, whenever a path has
finite $Y([0,T])$ norm, the number of steps needed to cover $[0,T]$
is at most
\begin{equation}\label{local:intervalcount}
 2+T+\frac{A(T)}{\delta}
           +\eta^{-10}\|u\|_{Y([0,T])}^{10}.
\end{equation}
Ties between exit conditions may be assigned to any one of them.

To prove uniqueness in the stated local class, suppose
$u$ and $v$ start from the same datum at $\sigma$. Stop at the first
of $\beta(\sigma)$, the time either accumulated $Y$ norm reaches
$\eta$, and the time either $H^1$ norm exceeds $K$. Localize the
initial event so that its norm is at most $K/2$. Denote this common
stop by $\gamma_K$. Before a possible stopping jump the $H^1$ norms
are at most $K$. Hence the stochastic terms stopped at $\gamma_K$
have second moments bounded by $C_V\delta K^2$. Their values at the
stopping jump are included in this estimate. The deterministic
nonlinear terms have $\mathcal N$ norm at most $C\eta^5$, while the
drift terms have $L^1([\sigma,\gamma_K];H^1(\R^3))$ norm at most
$C_V\delta K$.
The mild equations therefore place both stopped paths in
$\mathscr B([\sigma,\gamma_K])$. The bound concerns their second
moments; their post-jump values at $\gamma_K$ need not be bounded by $K$.

Both paths solve the same truncated equation on this interval.
Applying \eqref{contraction:constant} to their difference gives
\[
 \|u-v\|_{\mathscr B([\sigma,\gamma_K])}
       \le\tfrac12\|u-v\|_{\mathscr B([\sigma,\gamma_K])},
\]
and hence equality of the paths. Increase $K$ and restart from their
common endpoint values. Finitely many such comparisons cover every
compact interval on which both solutions have finite critical norm.
This proves pathwise uniqueness up to the common lifetime.
If an extension of the constructed solution beyond $\tau^*$ existed
in the local class, uniqueness would identify it with $u$ before
$\tau^*$ and contradict \eqref{blowup:alternative}. Thus the
constructed solution is maximal.
\end{proof}

The finite-activity equations admit global solutions on the prescribed
stochastic basis. Constructing these processes before comparing them
with $u$ ensures that the approximation uses the same $u_n$ at every
restart.

\begin{proposition}\label{prop:finite}
For each $n$, the equation with mark integrals restricted to
$Z_n=\{|z|>\eps_n\}$ has a global pathwise solution $u_n$. Between
its jump times it solves
\begin{equation}\label{finite:between}
 \partial_t u_n=i(\Delta u_n-F(u_n))+ia(t)C_nu_n,
 \qquad C_n(x)=\int_{Z_n}A_z(x)\nu(\dd z).
\end{equation}
At a jump $(t,z)$ its value is $J_{t,z}u_n(t-)$.
\end{proposition}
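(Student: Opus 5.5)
Since $\nu(Z_n)\le\eps_n^{-2}Q<\infty$, the restricted Poisson measure $N|_{[0,\infty)\times Z_n}$ has almost surely finitely many atoms on every bounded interval. List them as $(\tau_k,\zeta_k)$ with $0<\tau_1<\tau_2<\cdots\to\infty$; each $\tau_k$ is a stopping time and $\zeta_k$ is $\mathcal F_{\tau_k}$-measurable. The restricted compensated integral splits pathwise into a jump sum minus a compensator integral. The compensator part of $G$ combines with the restricted $B$ as follows:
\[
 -\int_{Z_n}G_{t,z}f\,\nu(\dd z)+\int_{Z_n}R_{t,z}f\,\nu(\dd z)
 =ia(t)\int_{Z_n}A_zf\,\nu(\dd z)=ia(t)C_nf.
\]
Here $C_n\in W^{1,\infty}$, with $\|C_n\|_{W^{1,\infty}}\le(b_0+b_1)\int_{Z_n}|z|\nu(\dd z)\le(b_0+b_1)Q\eps_n^{-1}$. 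Thus the restricted mild equation becomes the mild form of \eqref{finite:between} with the Duhamel term $\int S(t-r)G_{r,\zeta_k}u(r-)$ added at each jump. Since $a\in L^2_{\rm loc}\subset L^1_{\rm loc}$, the potential $P(t)=a(t)C_n$ lies in $L^1(0,T;W^{1,\infty})$.

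The plan is an interlacing construction. On $[0,\tau_1)$, Lemma~\ref{lem:potential} gives the unique global solution $w_0\in C([0,T];H^1)\cap Y([0,T])$ of \eqref{potential:eq} with datum $u_0$, for every $T$. We set $u_n=w_0$ on $[0,\tau_1)$ and $u_n(\tau_1)=J_{\tau_1,\zeta_1}w_0(\tau_1)$. This datum lies in $H^1$ by Lemma~\ref{lem:maps}, since $J=I+G$. We then restart Lemma~\ref{lem:potential} from $\tau_1$ with potential $a(t+\tau_1)C_n$, which still satisfies its hypotheses pathwise, and iterate. Because jumps do not accumulate, this defines a càdlàg path on $[0,\infty)$ lying in $Y([0,T])$ for every $T$: the $Y$ norm is finite on each of the finitely many pieces. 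To verify \eqref{main:mild}, we use the semigroup identity $S(t-r)S(r-s)=S(t-s)$ to concatenate the piecewise Duhamel formulas. The jump at $\tau_k$ contributes $S(t-\tau_k)(J-I)u_n(\tau_k-)=S(t-\tau_k)G_{\tau_k,\zeta_k}u_n(\tau_k-)$, and summing these reproduces the $N$-integral over $Z_n$. Adding the compensator and $B$ terms gives back the $ia(t)C_n$ potential term, by the identity above.

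Adaptedness is checked piece by piece. The deterministic solution map of Lemma~\ref{lem:potential} is continuous in the datum, and the datum $u_n(\tau_k)$ is $\mathcal F_{\tau_k}$-measurable. On $\{\tau_k\le t<\tau_{k+1}\}$, the value $u_n(t)$ is therefore a measurable function of $(\tau_j,\zeta_j)_{j\le k}$ and $t$, which gives adaptedness. Pathwise uniqueness among solutions of the restricted equation in the class \eqref{sol:class} follows from the same decomposition. Between jumps, any such solution solves \eqref{potential:eq} mildly, and Lemma~\ref{lem:potential} is unique in $C\cap Y$. At a jump time, the value is forced by the jump term. Alternatively, Proposition~\ref{prop:local} applies verbatim with $\nu$ replaced by $\nu|_{Z_n}$.

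The main obstacle is justifying rigorously that the compensated stochastic integral over $Z_n$ coincides pathwise with the finite jump sum minus its Bochner compensator. This requires integrability of $\|G_{r,z}u_n(r-)\|_{H^1}$ against $\nu|_{Z_n}\dd r$. That integrability holds pathwise because $\|G_{r,z}f\|_{H^1}\le C_V|a||z|\|f\|_{H^1}$, $\int_{Z_n}|z|\nu<\infty$, $a\in L^1_{\rm loc}$, and $\sup_{[0,T]}\|u_n\|_{H^1}<\infty$. We then localize by stopping at the first time the $H^1$ norm exceeds $K$, where the $L^2$ stochastic integral and the $L^1$ Lebesgue–Stieltjes integral against $N-\dd t\,\nu$ agree, and let $K\to\infty$.
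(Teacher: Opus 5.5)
Your proposal is correct and follows essentially the same route as the paper. The second-moment bounds give finitely many $Z_n$-jumps and $C_n\in W^{1,\infty}(\R^3)$. The identity $\int_{Z_n}(R_{t,z}-G_{t,z})\nu(\dd z)=ia(t)C_n$ turns the compensator plus $B$ into a real potential, so Lemma~\ref{lem:potential} applies between jumps, and the phase map $J_{t,z}$ acts at each jump. Concatenation via $S(t-r)S(r-s)=S(t-s)$, with measurable dependence on the data, then gives the adapted mild solution.

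There are only two minor differences. First, you spell out the localization that identifies the compensated integral with the jump sum minus its Bochner compensator, which the paper asserts in one line. Second, you build the path on all of $[0,\infty)$ at once, whereas the paper glues constructions on finite intervals through local uniqueness.
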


\begin{proof}
The second-moment assumption implies
\[
 \nu(Z_n)\le\eps_n^{-2}Q,\qquad
 \int_{Z_n}|z|\nu(\dd z)\le\eps_n^{-1}Q.
\]
The first inequality gives finitely many $Z_n$ jumps on each
$[0,T]$, almost surely. By the second inequality, each coefficient
$\int_{Z_n}z_j\nu(\dd z)$ is finite. Thus $C_n$ is a well-defined
real linear combination of the profiles, with
\[
 \|C_n\|_{W^{1,\infty}(\R^3)}
    \le C_V\int_{Z_n}|z|\nu(\dd z)
    \le C_V\eps_n^{-1}Q.
\]
In particular,
\[
 \int_0^T\|a(t)C_n\|_{W^{1,\infty}(\R^3)}\dd t
    \le C_V\eps_n^{-1}QT^{1/2}\|a\|_{L^2(0,T)}<\infty.
\]
Lemma~\ref{lem:potential} therefore applies on every interval between
consecutive jumps. These bounds may depend on $n$; uniform bounds
will be obtained from the energy identity in the next section.

To verify the drift in \eqref{finite:between}, write the compensated
Poisson term as its jump sum minus its compensator. The remaining
finite-variation coefficient is
\[
 \int_{Z_n}(R_{t,z}-G_{t,z})\nu(\dd z)
    =ia(t)\int_{Z_n}A_z\nu(\dd z)=ia(t)C_n.
\]
Thus compensation produces the real potential in
\eqref{finite:between}. At a jump,
$u_n(t)=u_n(t-)+G_{t,z}u_n(t-)=J_{t,z}u_n(t-)$.
The weak product rule gives
\[
 \nabla(J_{t,z}f)
    =e^{-ia(t)A_z}\bigl(\nabla f-ia(t)f\nabla A_z\bigr),
\]
and consequently
\[
 \|J_{t,z}f\|_{H^1(\R^3)}
       \le(1+b_1|a(t)||z|)\|f\|_{H^1(\R^3)}.
\]
Each observed jump therefore maps finite $H^1$ data to finite
$H^1$ data, including for unbounded jump amplitudes. Here $a(t)$ is
finite at every time, as stipulated in
Assumption~\ref{ass:model}.

Fix $T$ and list the finitely many jump times on $[0,T]$ in
increasing order.
Solve the deterministic equation up to the first time, apply its
phase map, and continue from the resulting datum. Repeating this
procedure gives a c\`adl\`ag $H^1$ path on $[0,T]$ with finite
$Y([0,T])$ norm. Each deterministic segment has finite norm by
Lemma~\ref{lem:potential}, and there are only finitely many segments.

The local contraction and comparison in Lemma~\ref{lem:potential}
give measurable dependence of the deterministic evolution on its
initial datum. Since the concatenation uses only the jumps already
observed and the evolution from the latest datum, it is adapted.
For observed jumps
$(t_\ell,z_\ell)$, concatenation gives
\[
 \begin{split}
 u_n(t)={}&S(t)u_0-i\int_0^tS(t-r)F(u_n(r))\dd r
       +i\int_0^tS(t-r)a(r)C_nu_n(r)\dd r\\
 &+\sum_{t_\ell\le t}
        S(t-t_\ell)G_{t_\ell,z_\ell}u_n(t_\ell-).
 \end{split}
\]
Substituting the preceding compensation identity recovers the
mark-truncated version of \eqref{main:mild}. Local uniqueness
identifies this construction with its maximal local solution.
The constructions on different finite intervals consequently
agree, which proves global existence for each $n$.
\end{proof}

Let
\[
 Q_n^{\mathrm{tail}}
   =\int_{\{0<|z|\le\eps_n\}}|z|^2\nu(\dd z),\qquad
 h_n(T)=Q_n^{\mathrm{tail}}\int_0^T a(r)^2\dd r.
\]
Dominated convergence gives $Q_n^{\mathrm{tail}}\to0$, and hence
$h_n(T)\to0$ for every finite $T$. These deterministic quantities
measure the omitted noise in the stochastic and compensator estimates.

\begin{lemma}\label{lem:localapprox}
On every interval $I$ of the contraction construction, the truncated
solutions $v_n$ with mark set $Z_n$ converge to the truncated
full-noise solution $v$ in the sense that
$\E\|v_n-v\|_{\mathcal X(I)}^2\to0$ for square-integrable starting
data. The assertion is stable under convergence of the initial data
in $L^2(\Omega;H^1(\R^3))$.

More generally, for each finite $T$ there are increasing stopping
times $\theta_j\le T$ tending to $T\wedge\tau^*$ such that
$\theta_j<\tau^*$ on $\{\tau^*\le T\}$ and
\begin{equation}\label{local:stoppedapprox}
 \|u_n-u\|_{\mathcal X([0,\theta_j])}\longrightarrow0
      \quad\hbox{in probability for each fixed }j.
\end{equation}
Here $u_n$ are the global finite-activity solutions from
Proposition~\ref{prop:finite}. Consequently, the solutions converge
in probability on each compact interval before the maximal
lifetime.
\end{lemma}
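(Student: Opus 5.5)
For the first assertion I would compare the two fixed points directly on the contraction interval $I=[s,b]$ (or the stochastic interval $[\sigma,\beta(\sigma)]$), where $q_I\le\delta$. Let $\mathcal T^{(n)}$ denote the map $\mathcal T$ with mark integrals restricted to $Z_n$, so that $v_n=\mathcal T^{(n)}v_n$ and $v=\mathcal T v$. Then
\[
 v_n-v=\bigl(\mathcal T^{(n)}v_n-\mathcal T^{(n)}v\bigr)+\bigl(\mathcal T^{(n)}v-\mathcal T v\bigr).
\]
The first difference is handled by \eqref{contraction:constant}, whose constant is uniform in the mark truncation because restricting to $Z_n$ only decreases the integrals in Lemma~\ref{lem:maps} and the clock $q_I$. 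It is therefore at most $\tfrac12\|v_n-v\|_{\mathscr B(I)}$. The second difference is the omitted noise acting on the fixed process $v$. Its Poisson part is a convolution over $\{|z|\le\eps_n\}$, and Lemma~\ref{lem:poisson} together with \eqref{G:bound} bounds it by $C_V Q_n^{\mathrm{tail}}\int_I a^2\,\E\sup\|v\|_{H^1}^2$. Its compensator part is $\int S(t-r)\int_{\{|z|\le\eps_n\}}R_{r,z}v\,\nu(\dd z)\dd r$, and \eqref{R:bound} with \eqref{str:lone} bounds it pathwise by $C_V h_n$ times $\sup\|v\|_{H^1}$. After absorbing, this gives
\[
 \E\|v_n-v\|_{\mathcal X(I)}^2\le C\bigl(h_n+h_n^2\bigr)\E\|f\|_{H^1}^2\to0 .
\]
Stability under convergence of the data in $L^2(\Omega;H^1)$ follows by adding \eqref{local:databound}, which holds for $v_n$ with a constant uniform in $n$.

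For \eqref{local:stoppedapprox} I would follow the concatenation in Proposition~\ref{prop:local}, using the stopping times $\sigma_k$ built from $u$. A complication is that $u_n$ is the global finite-activity solution rather than the truncated one. On $[\sigma_k,\beta(\sigma_k)]$ I would define a second stop $\rho_{k,n}$, the first time $\|u_n\|_{Y([\sigma_k,t])}$ reaches $\eta$. Before $\sigma_{k+1}\wedge\rho_{k,n}$ the cutoffs for $u$ and $u_n$ both equal one, so $u_n$ coincides with the truncated solution $v_{k,n}$ started from $u_n(\sigma_k)$; this follows from uniqueness of the truncated fixed point on the stochastic interval. By the first part, after localizing the data to $\{\|u(\sigma_k)\|\le m\}$ to obtain square integrability and using locality in $\mathcal F_{\sigma_k}$, convergence of $u_n(\sigma_k)\to u(\sigma_k)$ in probability gives $\|v_{k,n}-v_k\|_{\mathcal X}\to0$ in probability. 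Since $\|v_k\|_{Y}\le\eta$ on $[\sigma_k,\sigma_{k+1}]$, I would like to use the cutoff level $2\eta$ in the comparison, or compare up to $\sigma_{k+1}$ where $v_k$ has norm exactly $\eta$, so that $\|v_{k,n}\|_Y<2\eta$ with high probability. The cutoff for $v_{k,n}$ is then still one, or more precisely agrees with the cutoff for $u_n$. This gives $\PP(\rho_{k,n}<\sigma_{k+1})\to0$ and convergence on $[\sigma_k,\sigma_{k+1}]$ including the endpoint value. Induction on $k$ propagates convergence in probability of the endpoint data, using the post-jump values, which are included in the $\mathcal X$ norm. I expect this to be the main obstacle. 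The cutoff in \eqref{cutoff:def} must equal one for the approximations up to $\sigma_{k+1}$, which requires running the contraction with $\chi$ equal to one up to level $2\eta$ (replacing $\eta$ by a smaller absolute constant if necessary) and then restricting to the event that $\|v_{k,n}\|_Y<2\eta$.

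Finally I would set $\theta_j=\sigma_j\wedge T$. These stopping times increase to $T\wedge\tau^*$ and satisfy $\theta_j<\tau^*$ on $\{\tau^*\le T\}$ by the announcement property in Proposition~\ref{prop:local}. For fixed $j$, only the $j$ steps above are needed, so finitely many inductions give \eqref{local:stoppedapprox}. For the last sentence, a compact interval $[0,t]$ with $t<\tau^*$ is contained in $[0,\theta_j]$ for some random $j$. Given $\eps>0$, I would choose a deterministic $j$ with $\PP(t<\tau^*,\theta_j<t)<\eps$, which yields convergence in probability on the event $\{t<\tau^*\}$.
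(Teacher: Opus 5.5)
Your proposal is correct and follows the paper's proof. The first part uses the same splitting into a term contractive uniformly in $n$ plus the omitted-mark convolution and compensator. The second part rests on the same margin between the level at which comparison intervals are stopped and the level of the cutoff: you create it by running the comparison with cutoff level $2\eta$ on the original $\sigma_k$ (after shrinking $\eta$), whereas the paper keeps the cutoff at $\eta$ and introduces new inner times $\theta_j$ at level $\eta/2$. Your variant inherits exhaustion and $\theta_j<\tau^*$ directly from Proposition~\ref{prop:local}, while the paper has to re-derive them from the blow-up alternative.

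One detail needs repair. Localizing only on $\{\|u(\sigma_k)\|_{H^1(\R^3)}\le m\}$ does not upgrade convergence in probability of $u_n(\sigma_k)$ to the $L^2(\Omega;H^1(\R^3))$ convergence the first part requires. You must also localize on $\{\|u_n(\sigma_k)\|_{H^1(\R^3)}\le m+1\}$, which fails on the first event only with vanishing probability, or use the metric projection $\Pi_K$ as the paper does.
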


\begin{proof}
First let $I=[s,b]\subset[0,T]$ be one contraction interval, whose
endpoints may be stopping times, and let $f_n,f$ be its
$\mathcal F_s$-measurable square-integrable initial data.
Write the two truncated fixed-point equations with the same cutoff
function and threshold. In the difference, all terms involving
$v_n-v$ with the mark set $Z_n$ satisfy the contraction estimate
with constant at most $1/2$, because restricting the mark set only
decreases its second moment. The remaining stochastic term is
\[
 H_n(t)=\int_{(s,t]}\int_{Z\setminus Z_n}
          S(t-r)G_{r,z}v(r-)\widetilde N(\dd r,\dd z).
\]
The second-moment estimate gives, with predictable interval indicators
understood when the endpoints are random,
\[
 \begin{split}
 \E\|H_n\|_{\mathcal X(I)}^2
 &\le C_VQ_n^{\mathrm{tail}}\,
       \E\int_s^b a(r)^2\|v(r-)\|_{H^1(\R^3)}^2\dd r\\
 &\le C_Vh_n(T)\,
       \E\sup_{s\le r\le b}\|v(r)\|_{H^1(\R^3)}^2.
 \end{split}
\]
The other remaining term is the missing compensator
\[
 K_n(t)=\int_s^tS(t-r)
               \int_{Z\setminus Z_n}R_{r,z}v(r)\nu(\dd z)\dd r.
\]
By \eqref{R:bound} and \eqref{str:lone},
\[
 \|K_n\|_{\mathcal X(I)}
     \le C_Vh_n(T)\sup_{s\le r\le b}\|v(r)\|_{H^1(\R^3)}.
\]
The homogeneous estimate handles the initial difference. Absorbing
the contraction term therefore gives
\[
 \begin{split}
 \|v_n-v\|_{\mathscr B(I)}
 \le{}&C\bigl(\E\|f_n-f\|_{H^1(\R^3)}^2\bigr)^{1/2}\\
 &+C_V\bigl(h_n(T)^{1/2}+h_n(T)\bigr)\|v\|_{\mathscr B(I)}.
 \end{split}
\]
Using \eqref{local:truncatedbound} and squaring yields
\begin{equation}\label{local:quantapprox}
 \begin{split}
 \E\|v_n-v\|_{\mathcal X(I)}^2
 \le{}&C\E\|f_n-f\|_{H^1(\R^3)}^2\\
 &+C_V\bigl(h_n(T)+h_n(T)^2\bigr)
                         \E\|f\|_{H^1(\R^3)}^2.
 \end{split}
\end{equation}
The constants in \eqref{local:quantapprox} are independent of $n$
and of the interval endpoints. For a deterministic interval $I$,
$h_n(T)$ can be replaced throughout
by $Q_n^{\mathrm{tail}}\int_I a^2$. If the starting time is unbounded,
one may either localize it or use
$Q_n^{\mathrm{tail}}\int_Ia^2\le
 (Q_n^{\mathrm{tail}}/Q)\delta$ when $Q>0$.
When $Q=0$, the stochastic and compensator terms vanish.

Restarts produce data converging in probability without a uniform
moment bound. To cover this case, let $\Pi_K$ be the metric projection
onto the closed $H^1(\R^3)$ ball of radius $K$:
\[
 \Pi_K f=
 \begin{cases}
 f,&\|f\|_{H^1(\R^3)}\le K,\\
 Kf/\|f\|_{H^1(\R^3)},&\|f\|_{H^1(\R^3)}>K.
 \end{cases}
\]
This projection is continuous and nonexpansive. If $f_n\to f$ in
probability, then $\Pi_Kf_n\to\Pi_Kf$ in probability, and the
difference is bounded in norm by $2K$. For every $\varepsilon>0$,
\[
 \E\|\Pi_Kf_n-\Pi_Kf\|_{H^1(\R^3)}^2
 \le\varepsilon^2+
 4K^2\PP\bigl(\|\Pi_Kf_n-\Pi_Kf\|_{H^1(\R^3)}>\varepsilon\bigr).
\]
Letting $n\to\infty$ and then $\varepsilon\downarrow0$ gives
convergence in $L^2(\Omega;H^1(\R^3))$ for the projected data.
Estimate~\eqref{local:quantapprox} therefore applies.

On the $\mathcal F_s$-measurable event
$\{\|f_n\|_{H^1(\R^3)}\le K,\ \|f\|_{H^1(\R^3)}\le K\}$,
the projected and original truncated solutions agree by locality.
The probability of its complement is small uniformly for sufficiently
large $n$. Indeed,
\[
 \limsup_{n\to\infty}\PP(\|f_n\|_{H^1(\R^3)}>K)
       \le\PP(\|f\|_{H^1(\R^3)}>K/2),
\]
by the triangle inequality and convergence in probability. Increasing
$K$ proves that the original truncated solutions converge in
probability in $\mathcal X(I)$ whenever their initial data do so.

To remove the nonlinear cutoff, we choose comparison intervals using
the limiting path with threshold $\eta/2$, leaving a margin to the
cutoff threshold $\eta$. Fix $T$, put $\theta_0=0$, and, as long as
$\theta_j<T$, set
\begin{equation}\label{local:innerstop}
 \theta_{j+1}
   =T\wedge\beta(\theta_j)\wedge
      \inf\{t\ge\theta_j:
               \|u\|_{Y([\theta_j,t])}\ge\eta/2\}.
\end{equation}
The infimum is taken before the lifetime. The continuous adapted
accumulated norm makes each $\theta_j$ a stopping time. If
$\tau^*\le T$, the
blow-up alternative ensures that this critical exit occurs before
$\tau^*$ whenever the other two exits have not already occurred.
Thus every finite $\theta_j$ is strictly less than $\tau^*$ on this
event. If $\theta_j=T$, keep subsequent times equal to $T$.

These inner intervals exhaust $[0,T\wedge\tau^*)$. In fact, if their
limit were smaller than both $T$ and $\tau^*$, the path would have
finite $Y$ norm on a slightly longer compact interval. Infinitely
many critical exits of size $\eta/2$ would then be impossible,
and finitely many units of physical time and noise clock also rule
out infinitely many other exits. The same reasoning as in
\eqref{local:intervalcount} bounds the number needed before any
compact terminal time by
\[
 2+T+\frac{A(T)}{\delta}
          +(2/\eta)^{10}\|u\|_{Y([0,T])}^{10}
\]
on the event that the right side is defined and finite.

Suppose inductively that $u_n(\theta_j)\to u(\theta_j)$ in
probability. Solve the truncated equations from these data on
$[\theta_j,T\wedge\beta(\theta_j)]$ with the outer threshold $\eta$,
and denote their solutions by $v_n$ and $v$. The preceding argument
gives
\[
 \|v_n-v\|_{\mathcal X([\theta_j,T\wedge\beta(\theta_j)])}
           \longrightarrow0\quad\hbox{in probability}.
\]
First identify these truncated solutions with the original solutions
on stopping-time intervals. Write
$b_j=T\wedge\beta(\theta_j)$ and, for $w=v$ or $v_n$, define
\begin{equation}\label{local:outerstop}
 \rho(w)=\inf\{t\in[\theta_j,b_j]:
                    \|w\|_{Y([\theta_j,t])}\ge\eta\}.
\end{equation}
The accumulated norm is continuous and adapted, so these are stopping
times, with value infinity if the indicated set is empty. Up to
$\rho(v)$, the truncated equation for $v$ has cutoff equal to one.
Pathwise uniqueness on the common stopped interval therefore gives
$v=u$ on $[\theta_j,\theta_{j+1}\wedge\rho(v)]$. If
$\rho(v)\le\theta_{j+1}$, continuity of the accumulated norm would give
\[
 \eta=\|v\|_{Y([\theta_j,\rho(v)])}
      =\|u\|_{Y([\theta_j,\rho(v)])}\le\eta/2.
\]
Consequently $\rho(v)>\theta_{j+1}$, and $v=u$ on the whole inner
interval. For each $n$, the same stopped uniqueness argument, applied
to the mark-truncated equation, gives
\[
 v_n=u_n\quad\text{on }[\theta_j,b_j\wedge\rho(v_n)]
 \quad\text{almost surely}.
\]
These identities are obtained by stopped pathwise uniqueness and
hold outside a null set before any restriction according to the
future approximation error.

Now consider the event that
$\|v_n-v\|_{\mathcal X([\theta_j,b_j])}\le\eta/4$. On this event,
\[
 \|v_n\|_{Y([\theta_j,\theta_{j+1}])}
 \le\|u\|_{Y([\theta_j,\theta_{j+1}])}
        +\|v_n-v\|_{Y([\theta_j,\theta_{j+1}])}
 \le\frac{3\eta}{4}.
\]
In particular, $\rho(v_n)>\theta_{j+1}$ there. The stopped identity
already proved therefore implies $v_n=u_n$ throughout the inner
interval on this event. This restriction uses a pathwise identity;
it does not insert the future error event into a stochastic integrand.
For any $\varepsilon>0$ we have obtained
\[
 \begin{split}
 &\PP\bigl(
   \|u_n-u\|_{\mathcal X([\theta_j,\theta_{j+1}])}
                                      >\varepsilon\bigr)\\
 &\quad\le
 \PP\bigl(
  \|v_n-v\|_{\mathcal X([\theta_j,T\wedge\beta(\theta_j)])}
                            >\min\{\varepsilon,\eta/4\}\bigr)
       \longrightarrow0.
 \end{split}
\]
The $\mathcal X$ supremum includes $\theta_{j+1}$, so the same
estimate gives convergence of the endpoint data, including a possible
jump, for the next restart.

At $j=0$ the data agree. Induction consequently proves convergence
on every fixed finite collection of inner intervals. The supremum
over their union is the maximum of the separate suprema, and the
tenth power of its $Y$ norm is the sum of the separate tenth powers.
Consequently, the $\mathcal X$ norm on the union is bounded by the
sum of the $\mathcal X$ norms on the individual intervals. This gives
\eqref{local:stoppedapprox}.

To state the local convergence without a random norm domain, fix
$t\le T$. On $\{t<\tau^*\}$, finitely many inner
intervals cover $[0,t]$ almost surely. For each fixed $j$,
\[
 \begin{split}
 &\PP\bigl(
   \|u_n-u\|_{\mathcal X([0,t])}>\varepsilon,\ t<\tau^*\bigr)\\
 &\quad\le
 \PP(\theta_j<t,\ t<\tau^*)+
 \PP\bigl(\|u_n-u\|_{\mathcal X([0,\theta_j])}>\varepsilon\bigr).
 \end{split}
\]
Let $n\to\infty$ and then $j\to\infty$. The second term vanishes
by \eqref{local:stoppedapprox}, and the first vanishes by the
exhaustion just established. This proves the asserted local
convergence in probability.
\end{proof}

\section{Energy bounds and global continuation}\label{sec:energy}

The energy argument separates the martingale part of each jump from
its nonnegative quadratic correction. The former is linear in the
mark and is estimated in $L^2(\Omega)$; the latter is estimated in
$L^1(\Omega)$ by compensation. Both estimates therefore require only
the second moment of the L\'evy measure. We first obtain bounds uniform
in the finite-activity truncation, transfer them to the maximal local
solution, and prove global continuation. The exact energy identity
for the limiting equation is established after this construction.
Throughout the section, $u_0$ is deterministic and
$M_0=M(u_0)=\|u_0\|_{L^2(\R^3)}^2$. The constants in the maximal
energy estimate are independent of the truncation of the L\'evy measure.
The factor $b_1$ records the dependence on the spatial gradients of
the phase profiles; their spatially constant parts leave the energy
unchanged.

\begin{lemma}\label{lem:jumpenergy}
For $f\in H^1(\R^3)$,
\begin{equation}\label{jump:energy}
 M(J_{t,z}f)=M(f),\qquad
 E(J_{t,z}f)-E(f)=\Lambda_{t,z}(f)+D_{t,z}(f).
\end{equation}
Moreover,
\begin{align}
 |\Lambda_{t,z}(f)|^2
  &\le2a(t)^2|z|^2b_1^2 M(f)E(f),\label{Lambda:bound}\\
 0\le D_{t,z}(f)
  &\le\tfrac12a(t)^2|z|^2b_1^2M(f).\label{D:bound}
\end{align}
\end{lemma}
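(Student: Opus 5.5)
The plan is to compute everything directly from the product formula for the phase multiplier. Put $\theta=a(t)A_z$, which is real and Lipschitz, so $|J_{t,z}f|=|f|$ pointwise. This at once gives $M(J_{t,z}f)=M(f)$ and equality of the $L^6$ parts of the energy. Hence only the kinetic term changes.

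For the kinetic term, we use the weak product rule already recorded in the proof of Proposition~\ref{prop:finite}:
\[
 \nabla(J_{t,z}f)=e^{-i\theta}\bigl(\nabla f-if\nabla\theta\bigr),
\]
which gives the pointwise identity
\[
 |\nabla(J_{t,z}f)|^2=|\nabla f|^2
 +2\operatorname{Re}\bigl(\overline{\nabla f}\cdot(-if\nabla\theta)\bigr)
 +|f|^2|\nabla\theta|^2 .
\]
The cross term equals $2\nabla\theta\cdot\operatorname{Re}(-i f\overline{\nabla f})=-2\nabla\theta\cdot\operatorname{Im}(\overline f\nabla f)$. We should check this sign carefully, since $\operatorname{Re}(-iw)=\operatorname{Im}w$ and $\operatorname{Im}(f\overline{\nabla f})=-\operatorname{Im}(\overline f\nabla f)$. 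Thus it is $-2\nabla\theta\cdot j(f)$.

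We then multiply by $\tfrac12$, integrate, and substitute $\nabla\theta=a(t)\nabla A_z$. This yields
\[
 E(J_{t,z}f)-E(f)=-a(t)\int\nabla A_z\cdot j(f)\dd x+\tfrac{a(t)^2}{2}\|f\nabla A_z\|_{L^2}^2
 =\Lambda_{t,z}(f)+D_{t,z}(f).
\]
All integrals are finite because $\nabla A_z\in L^\infty$ and $j(f)\in L^1$ for $f\in H^1$.

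For the bounds, the key pointwise estimate is $|\nabla A_z|\le\sum_j|z_j||\nabla V_j|\le|z|\,(\sum_j|\nabla V_j|^2)^{1/2}$ by Cauchy--Schwarz in $j$. This is bounded in $L^\infty$ by $|z|b_1$, using the definition of $b_1$; note that we should bound $\sum_j|\nabla V_j(x)|^2\le\sum_j\|\nabla V_j\|_\infty^2$ pointwise. Then $0\le D_{t,z}(f)\le\tfrac12a(t)^2|z|^2b_1^2\|f\|_{L^2}^2$, which is \eqref{D:bound}.

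For $\Lambda$, use $|j(f)|\le|f||\nabla f|$ and Cauchy--Schwarz:
\[
 |\Lambda_{t,z}(f)|\le|a(t)||z|b_1\|f\|_{L^2}\|\nabla f\|_{L^2}.
\]
Squaring and using $\|\nabla f\|_{L^2}^2\le2E(f)$ gives \eqref{Lambda:bound}.

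The only delicate point is justifying the product rule for $e^{-i\theta}f$ with $\theta\in W^{1,\infty}$ and $f\in H^1$. This can be done by approximating $f$ by smooth functions and noting that $e^{-i\theta}$ is bounded Lipschitz with gradient $-ie^{-i\theta}\nabla\theta$ (chain rule for Lipschitz compositions with a smooth outer function). Apart from this, the proof is a routine computation, and sign bookkeeping in the cross term is the main thing to get right.
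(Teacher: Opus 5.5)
Your proposal is correct and follows the paper's proof essentially step for step. The modulus-one phase leaves the mass and the $L^6$ term unchanged, the pointwise expansion of $|\nabla f-ia(t)f\nabla A_z|^2$ produces the cross term $-2a(t)\nabla A_z\cdot j(f)$ with the correct sign, and the two bounds follow from $|\nabla A_z|\le b_1|z|$ (Cauchy--Schwarz in the coefficient index) together with Cauchy--Schwarz in space, with the product rule justified by smooth approximation exactly as in the paper.
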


\begin{proof}
Let $\vartheta=a(t)A_z$. Since $A_z$ is real and Lipschitz, the
Sobolev product and chain rules give
\begin{equation}\label{jump:gradient}
 \nabla(e^{-i\vartheta}f)
       =e^{-i\vartheta}(\nabla f-if\nabla\vartheta)
       \quad\text{almost everywhere on }\R^3.
\end{equation}
The formula also follows by smooth approximation in $H^1(\R^3)$,
using boundedness of the multiplier and its first derivative. Thus
only the assumed first spatial derivatives of $A_z$ are needed.
The phase has modulus one, so
\[
 \|J_{t,z}f\|_{L^2(\R^3)}=\|f\|_{L^2(\R^3)},\qquad
 \|J_{t,z}f\|_{L^6(\R^3)}=\|f\|_{L^6(\R^3)}.
\]
The mass and the nonlinear part of the energy are therefore unchanged.
The cross term in the square of \eqref{jump:gradient} is determined by
\[
 \operatorname{Re}\bigl(\nabla f\cdot
               \overline{-ia(t)f\nabla A_z}\bigr)
 =a(t)\nabla A_z\cdot\operatorname{Re}(i\overline f\nabla f)
 =-a(t)\nabla A_z\cdot j(f).
\]
Consequently,
\[
 \begin{split}
 |\nabla(J_{t,z}f)|^2
 &=|\nabla f-ia(t)f\nabla A_z|^2\\
 &=|\nabla f|^2
   -2a(t)\nabla A_z\cdot\operatorname{Im}(\overline f\nabla f)
   +a(t)^2|f|^2|\nabla A_z|^2.
 \end{split}
\]
All terms in this identity are integrable: the mixed term is in
$L^1(\R^3)$ by Cauchy--Schwarz, and the last term is integrable by
boundedness of $\nabla A_z$. Integrating and dividing by two proves
\eqref{jump:energy}, with exactly the sign in
\eqref{energy:increments}. Cauchy--Schwarz in the coefficient index
first gives
\[
 |\nabla A_z(x)|
 \le |z|\Bigl(\sum_{j=1}^m|\nabla V_j(x)|^2\Bigr)^{1/2}
 \le b_1|z|.
\]
Cauchy--Schwarz in the spatial variable then gives
\[
 |\Lambda_{t,z}(f)|
       \le|a(t)||z|b_1\|f\|_{L^2(\R^3)}
                                  \|\nabla f\|_{L^2(\R^3)}.
\]
Squaring and using
$\|\nabla f\|_{L^2(\R^3)}^2\le2E(f)$ proves
\eqref{Lambda:bound}. Similarly,
\[
 D_{t,z}(f)=\frac{a(t)^2}{2}
                \int_{\R^3}|f(x)|^2|\nabla A_z(x)|^2\dd x
 \le\frac{a(t)^2}{2}b_1^2|z|^2M(f),
\]
which proves \eqref{D:bound}. The identity is exact for every mark
$z$, and its energy correction has no terms of order higher than two.
\end{proof}

\begin{proposition}\label{prop:finiteenergy}
The finite-activity solution $u_n$ conserves mass and satisfies
\begin{equation}\label{finite:balance}
 \begin{split}
 E(u_n(t))=E(u_0)
 &+\int_0^t\int_{Z_n}\Lambda_{s,z}(u_n(s-))
                                      \widetilde N(\dd s,\dd z)\\
 &+\int_0^t\int_{Z_n}D_{s,z}(u_n(s-))N(\dd s,\dd z).
 \end{split}
\end{equation}
For every finite $T$, uniformly in $n$,
\begin{equation}\label{finite:maxbound}
 \E\sup_{0\le t\le T}E(u_n(t))
       \le C\bigl(E(u_0)+b_1^2M(u_0)A(T)\bigr).
\end{equation}
\end{proposition}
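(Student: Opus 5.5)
Between jumps I will apply Lemma~\ref{lem:potential}, and at jumps Lemma~\ref{lem:jumpenergy}. By Proposition~\ref{prop:finite}, $u_n$ has finitely many jumps on $[0,T]$. Between consecutive jumps it solves \eqref{finite:between}, that is, \eqref{potential:eq} with $P=a(t)C_n\in L^1(0,T;W^{1,\infty})$. Lemma~\ref{lem:potential} then gives mass conservation on each segment, together with
\[
 E(u_n(t))-E(u_n(s))=\int_s^t a(r)\int_{\R^3}\nabla C_n\cdot j(u_n(r))\dd x\dd r .
\]
At a jump $(t,z)$, Lemma~\ref{lem:jumpenergy} gives conservation of mass and the increment $\Lambda_{t,z}(u_n(t-))+D_{t,z}(u_n(t-))$. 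Summing over segments and jumps expresses $E(u_n(t))-E(u_0)$ as the drift integral plus $\int_0^t\int_{Z_n}(\Lambda+D)\,N(\dd s,\dd z)$. Since $\nabla C_n=\int_{Z_n}\nabla A_z\nu(\dd z)$ and $\int_{Z_n}|z|\nu<\infty$, Fubini gives
\[
 a(r)\int\nabla C_n\cdot j\dd x=-\int_{Z_n}\Lambda_{r,z}(u_n(r))\nu(\dd z).
\]
The drift is therefore exactly minus the compensator of the $\Lambda$-jump sum. This compensation is legitimate because $\int_0^T\int_{Z_n}|\Lambda|\nu\dd s<\infty$ pathwise, using \eqref{Lambda:bound}, the finite first moment on $Z_n$, and boundedness of the path on $[0,T]$. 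Replacing $N$ by $\widetilde N$ in the $\Lambda$ term gives \eqref{finite:balance}. The left limit $u_n(s-)$ may replace $u_n(s)$ in the compensator, since they differ only at countably many times.

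For the uniform bound, I localize with $\tau_R=\inf\{t:E(u_n(t))>R\}\wedge T$. Mass conservation gives $M(u_n)=M_0$. Write $\mathcal M(t)$ for the stopped $\Lambda$-integral and $\mathcal D(t)$ for the $D$-integral. Both are then well defined as, respectively, a square-integrable martingale and an increasing process. For the quadratic part, \eqref{D:bound} and compensation (taking expectations of a nonnegative $N$-integral) give
\[
 \E\mathcal D(T)\le\tfrac12 b_1^2M_0\int_0^T a^2\int_{Z_n}|z|^2\nu\dd s\le\tfrac12b_1^2M_0A(T),
\]
uniformly in $n$ because $\int_{Z_n}|z|^2\nu\le Q$. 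For the martingale part I use the Burkholder--Davis--Gundy inequality with exponent one, in the form $\E\sup|\mathcal M|\le C\E[\mathcal M]^{1/2}$ or via its predictable quadratic variation. I will use the predictable bracket, which is valid for $p=1$ via Lenglart's inequality for quasi-left-continuous martingales. With \eqref{Lambda:bound} this gives
\[
 \E\sup_{t\le\tau_R}|\mathcal M(t)|\le C\,\E\Bigl(\int_0^{\tau_R}2a^2Qb_1^2M_0E(u_n(s-))\dd s\Bigr)^{1/2}
 \le C\,\E\Bigl[\bigl(\sup_{s<\tau_R}E(u_n(s))\bigr)^{1/2}\bigl(b_1^2M_0A(T)\bigr)^{1/2}\Bigr].
\]
Young's inequality bounds this by $\tfrac12\E\sup_{s\le\tau_R}E+Cb_1^2M_0A(T)$.

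The main obstacle is the absorption step. The supremum at $\tau_R$ includes a possible jump that overshoots $R$, so finiteness of $\E\sup_{t\le\tau_R}E(u_n)$ must be checked before the $\tfrac12$ term is absorbed. I will handle this by noting that the supremum over $[0,\tau_R)$ is at most $R$, while the overshoot at $\tau_R$ is bounded by $R+|\Lambda|+D$ at the single jump. Its expectation is finite by the compensator bound
\[
 \E\sum_{\text{jumps}}(|\Lambda|+D)\le\E\int\!\!\int\bigl(|\Lambda|+D\bigr)\nu\dd s,
\]
which is finite for fixed $n$ on the stopped interval. Alternatively, I will absorb only the strict supremum over $[0,\tau_R)$, which is bounded by $R$. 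Combining the pieces gives
\[
 \E\sup_{t\le\tau_R}E(u_n(t))\le C\bigl(E(u_0)+b_1^2M_0A(T)\bigr)
\]
with $C$ absolute. Since $u_n$ is global with a bounded path, $\tau_R\to T$ as $R\to\infty$, and Fatou's lemma then yields \eqref{finite:maxbound}.
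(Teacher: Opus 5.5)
Your proposal is correct and follows the paper's proof. It uses the same segment-by-segment application of Lemma~\ref{lem:potential} and Lemma~\ref{lem:jumpenergy}, the same Fubini cancellation of the drift against the compensator of the $\Lambda$-jumps, the same energy stopping time with the quadratic $D$-term controlled in $L^1(\Omega)$ by compensation, and Fatou at the end. The only variation is technical: you use the $p=1$ BDG/Lenglart inequality with a pathwise Young step and absorb the strict supremum over $[0,\tau_R)$, which is at most $R$, whereas the paper uses Doob's $L^2$ inequality and Cauchy--Schwarz in $\Omega$ through $\E|\mathcal L_R(T)|^2\le 2b_1^2M_0A(T)H_R$, after first checking $H_R<\infty$ from the crude pre-jump bound $2b_1^2M_0RA(T)$.
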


\begin{proof}
For a fixed $n$, the number of $Z_n$ jumps before a fixed finite time
is almost surely finite. The first-moment bound on $Z_n$ makes
$C_n=\int_{Z_n}A_z\nu(\dd z)$ a well-defined real
$W^{1,\infty}(\R^3)$ function. Expanding the compensated equation
between jumps, its drift from the noise is
\[
 \int_{Z_n}(R_{t,z}-G_{t,z})u_n\,\nu(\dd z)
 =ia(t)\int_{Z_n}A_z u_n\,\nu(\dd z)
 =ia(t)C_nu_n.
\]
This gives the potential term in \eqref{finite:between} with its
stated sign. Since $a\in L^1(0,T)$, the potential lemma
applies on every interval between consecutive jumps. It preserves
mass there, and Lemma~\ref{lem:jumpenergy} preserves mass at every
jump. Hence $M(u_n(t))=M_0$ for all $t$ on an event of full probability.

On an interval without a jump, \eqref{potential:energy} with
$P=aC_n$ gives
\[
 \frac{\mathrm d}{\mathrm dt}E(u_n(t))
     =a(t)\int_{\R^3}\nabla C_n\cdot j(u_n(t))\dd x
     =-\int_{Z_n}\Lambda_{t,z}(u_n(t))\nu(\dd z)
\]
for almost every $t$. The interchange of the $z$ and $x$ integrals is
justified by $\int_{Z_n}|z|\nu(\dd z)<\infty$ and the spatial
Cauchy--Schwarz bound used above. At a jump with mark $z$, the energy
increment is $\Lambda_{t,z}(u_n(t-))+D_{t,z}(u_n(t-))$. Adding the
absolutely continuous increments and the finite sum of jump increments
therefore yields
\[
 \begin{split}
 E(u_n(t))-E(u_0)
 ={}&-\int_0^t\int_{Z_n}\Lambda_{s,z}(u_n(s))\nu(\dd z)\dd s\\
 &+\int_0^t\int_{Z_n}
       (\Lambda_{s,z}(u_n(s-))+D_{s,z}(u_n(s-)))N(\dd s,\dd z).
 \end{split}
\]
Changing to left limits in the time integral does not change its value.
Combining its $\Lambda$ term with the Poisson sum gives
\eqref{finite:balance}. The compensating drift cancels exactly at
each truncation level, so the energy estimate does not require a
uniform bound on $C_n$.

Fix $R>E(u_0)$ and define
\[
 \rho_R=\inf\{t\ge0:E(u_n(t))\ge R\},
\]
with the infimum of the empty set equal to infinity. Write
\[
 \begin{split}
 \mathcal L_R(t)&=\int_0^{t\wedge\rho_R}\int_{Z_n}
       \Lambda_{s,z}(u_n(s-))\widetilde N(\dd s,\dd z),\\
 \mathcal D_R(t)&=\int_0^{t\wedge\rho_R}\int_{Z_n}
       D_{s,z}(u_n(s-))N(\dd s,\dd z).
 \end{split}
\]
These integrals include a possible jump at $\rho_R$ and use predictable
left-limit values. In particular, $E(u_n(s-))\le R$ for
$0<s\le\rho_R$, even when the stopping jump overshoots $R$. Thus
\begin{equation}\label{finite:stopped-quadratic}
 \E|\mathcal L_R(T)|^2
 =\E\int_0^{T\wedge\rho_R}\int_{Z_n}
      |\Lambda_{s,z}(u_n(s-))|^2\nu(\dd z)\dd s
 \le2b_1^2M_0R A(T)<\infty.
\end{equation}
The nonnegative integral, by the compensation formula and
\eqref{D:bound}, satisfies
\begin{equation}\label{finite:positive-expectation}
 \E\mathcal D_R(T)
 =\E\int_0^{T\wedge\rho_R}\int_{Z_n}
                   D_{s,z}(u_n(s-))\nu(\dd z)\dd s
 \le\frac12b_1^2M_0 A(T).
\end{equation}
Both bounds depend only on the pre-jump mass and energy and remain
valid for an arbitrarily large stopping jump.

Set $H_R=\E\sup_{0\le t\le T}E(u_n(t\wedge\rho_R))$.
The stopped balance and nonnegativity of $\mathcal D_R$ imply
\[
 \sup_{0\le t\le T}E(u_n(t\wedge\rho_R))
 \le E(u_0)+\sup_{0\le t\le T}|\mathcal L_R(t)|+\mathcal D_R(T).
\]
By \eqref{finite:stopped-quadratic}, Doob's inequality and
\eqref{finite:positive-expectation}, the right-hand side is integrable.
Thus $H_R<\infty$, so the subsequent absorption is justified. Replacing
the bound $R$ in \eqref{finite:stopped-quadratic} by the stopped energy
supremum gives
\[
 \E|\mathcal L_R(T)|^2
 \le2b_1^2M_0Q\int_0^T a(s)^2
       \E\bigl[\mathbf1_{\{s\le\rho_R\}}E(u_n(s-))\bigr]\dd s
 \le2b_1^2M_0 A(T)H_R.
\]
Doob's $L^2$ inequality followed by Cauchy--Schwarz consequently gives
\begin{equation}\label{finite:doob}
 \begin{split}
 \E\sup_{0\le t\le T}|\mathcal L_R(t)|
 &\le\left(\E\sup_{0\le t\le T}|\mathcal L_R(t)|^2\right)^{1/2}\\
 &\le2\bigl(\E|\mathcal L_R(T)|^2\bigr)^{1/2}
 \le2\sqrt2\,b_1\sqrt{M_0A(T)H_R}.
 \end{split}
\end{equation}
Substituting this into the stopped balance yields
\[
 H_R\le E(u_0)+2\sqrt2\,b_1\sqrt{M_0A(T)H_R}
                         +\frac12b_1^2M_0A(T).
\]
The elementary inequality
\[
 2\sqrt2\,b_1\sqrt{M_0A(T)H_R}
 \le\frac12H_R+4b_1^2M_0A(T)
\]
therefore gives the explicit uniform estimate
\begin{equation}\label{finite:uniform-stopped}
 H_R\le2E(u_0)+9b_1^2M_0A(T).
\end{equation}
The finite-activity path is bounded in $H^1(\R^3)$ on a finite interval,
since it contains only finitely many phase maps and continuous
deterministic solution pieces. Thus $\rho_R\to\infty$ as $R\to\infty$
on each such path. For sufficiently large $R$, the stopped and
unstopped paths coincide on $[0,T]$. Fatou's lemma applied to
\eqref{finite:uniform-stopped} proves \eqref{finite:maxbound}.
Here the square-integrability estimate is applied only to $\Lambda$,
which is linear in $z$. The quadratic term $D$ is retained as a
nonnegative Poisson integral and controlled through
\eqref{finite:positive-expectation}; no fourth jump moment is used.
\end{proof}

\begin{lemma}\label{lem:prelife}
The maximal solution of Proposition~\ref{prop:local} conserves mass
before its lifetime and satisfies
\begin{equation}\label{prelife:energy}
 \E\sup_{0\le t<T\wedge\tau^*} E(u(t))
       \le C\bigl(E(u_0)+b_1^2M(u_0)A(T)\bigr)
\end{equation}
for every finite $T$.
\end{lemma}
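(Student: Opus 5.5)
The plan is to transfer the uniform finite-activity estimates of Proposition~\ref{prop:finiteenergy} to the maximal solution through the local convergence in Lemma~\ref{lem:localapprox}. Fix $T$ and take the stopping times $\theta_j$ of Lemma~\ref{lem:localapprox}. They increase to $T\wedge\tau^*$, satisfy $\theta_j<\tau^*$ on $\{\tau^*\le T\}$, and give
\[
 \sup_{0\le t\le\theta_j}\|u_n(t)-u(t)\|_{H^1(\R^3)}\to0
\]
in probability for each fixed $j$.

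\textbf{Mass conservation.} Mass is continuous on $H^1$, and $M(u_n(t))=M(u_0)$ for all $t$ almost surely. Passing to an almost surely convergent subsequence, we get $M(u(t))=M(u_0)$ for all $t\le\theta_j$, outside a null set. Letting $j\to\infty$ along countably many $j$ gives conservation on $[0,\tau^*)$, by exhaustion.

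\textbf{Energy bound.} The energy $E$ is continuous on $H^1(\R^3)$: the $L^6$ part is continuous by Sobolev embedding. Hence uniform convergence of the paths on $[0,\theta_j]$ gives
\[
 \sup_{0\le t\le\theta_j}E(u_n(t))\to\sup_{0\le t\le\theta_j}E(u(t))
\]
in probability. Along a further almost surely convergent subsequence, Fatou's lemma gives
\[
 \E\sup_{0\le t\le\theta_j}E(u(t))\le\liminf_n\E\sup_{0\le t\le T}E(u_n(t))\le C\bigl(E(u_0)+b_1^2M_0A(T)\bigr).
\]
This uses $\theta_j\le T$ and \eqref{finite:maxbound}, whose constant is independent of $n$. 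Since $\theta_j\uparrow T\wedge\tau^*$, the suprema over $[0,\theta_j]$ increase to the supremum over $[0,T\wedge\tau^*)$, up to the endpoint. Monotone convergence then yields \eqref{prelife:energy}.

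\textbf{Main obstacle.} The delicate point is the endpoint bookkeeping when $\tau^*>T$. There $\theta_j$ may equal $T$, and the supremum over $[0,\theta_j]$ includes the value at $T$. That only makes the left side larger, so the bound still holds; on $\{\tau^*\le T\}$ each $\theta_j<\tau^*$, so no value beyond the lifetime is used. One must also check that the exhaustion holds almost surely, so that monotone convergence is legitimate. This is exactly the content of the construction in Lemma~\ref{lem:localapprox}.
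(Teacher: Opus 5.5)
Your proposal is correct and follows essentially the same route as the paper. Both use local convergence on the inner intervals $[0,\theta_j]$ from Lemma~\ref{lem:localapprox}, an almost surely convergent subsequence, Fatou's lemma against the $n$-uniform bound \eqref{finite:maxbound}, and monotone convergence in $j$ through the exhaustion of $[0,T\wedge\tau^*)$.

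The one step you leave implicit is passing from uniform convergence of the paths to uniform convergence of their energies. This needs more than continuity of $E$. The paper supplies it with the local Lipschitz bound \eqref{energy:local-lipschitz} on $H^1$ balls, together with boundedness of the c\`adl\`ag limit path on $[0,\theta_j]$. Relative compactness of that path's range would also suffice.
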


\begin{proof}
Fix $T<\infty$ and use the inner stopping times $\theta_j\le T$
defined in \eqref{local:innerstop}. Lemma~\ref{lem:localapprox}
shows that these intervals exhaust $[0,T\wedge\tau^*)$, include
$T$ after finitely many steps when $T<\tau^*$, and have endpoints
strictly before $\tau^*$ when $\tau^*\le T$. For each fixed $j$,
\eqref{local:stoppedapprox} gives
\begin{equation}\label{prelife:inner-convergence}
 \|u_n-u\|_{\mathcal X([0,\theta_j])}
       \longrightarrow0\qquad\text{in probability}.
\end{equation}
This convergence concerns the original global finite-activity solutions
$u_n$, including their post-jump values at $\theta_j$. It follows from
the local construction and projection of the data at stopping times,
independently of global existence for the limiting solution.

The energy is locally Lipschitz on $H^1(\R^3)$. For $f$ and $g$ in
the closed $H^1(\R^3)$ ball of radius $K$, we have
\begin{align}
 |E(f)-E(g)|
 &\le\frac12\bigl(\|\nabla f\|_{L^2(\R^3)}
                  +\|\nabla g\|_{L^2(\R^3)}\bigr)
                    \|\nabla(f-g)\|_{L^2(\R^3)}\notag\\
 &\quad+C\bigl(\|f\|_{L^6(\R^3)}
                  +\|g\|_{L^6(\R^3)}\bigr)^5
                    \|f-g\|_{L^6(\R^3)}\notag\\
 &\le C(K+K^5)\|f-g\|_{H^1(\R^3)}.\label{energy:local-lipschitz}
\end{align}
The mass satisfies the analogous quadratic estimate. Fix $j$. To
retain the limit inferior of the full sequence when applying Fatou's
lemma, first choose a subsequence along which
$\E\sup_{0\le t\le T}E(u_n(t))$ converges to the limit inferior of
this sequence. From that subsequence, convergence in probability in
\eqref{prelife:inner-convergence} permits a further subsequence
converging almost surely. Each limiting path is bounded in $H^1(\R^3)$ on
$[0,\theta_j]$, by its c\`adl\`ag regularity on this compact
interval. Uniform convergence makes the approximating paths bounded
there as well, for all sufficiently large indices of the subsequence.
Applying \eqref{energy:local-lipschitz} with this finite, possibly
path-dependent bound proves uniform convergence of their energies.
The same argument proves uniform convergence of their masses.
Since the approximating mass equals $M_0$ at all times, the limiting
mass is $M_0$ throughout this interval.

Fatou's lemma and Proposition~\ref{prop:finiteenergy} now give
\begin{align}
 \E\sup_{0\le t\le\theta_j}E(u(t))
 &\le\liminf_{n\to\infty}
          \E\sup_{0\le t\le T}E(u_n(t))\notag\\
 &\le C\bigl(E(u_0)+b_1^2M_0A(T)\bigr).\label{prelife:inner-fatou}
\end{align}
The second subsequence preserves the limit of the expectations, which
justifies the limit inferior over the full sequence in
\eqref{prelife:inner-fatou}. The subsequences may depend on $j$.
Intersecting the resulting countably many full-probability events
gives mass conservation on the union of the inner intervals.

The energy suprema in \eqref{prelife:inner-fatou} increase with $j$.
The exhaustion therefore gives the supremum before $T\wedge\tau^*$,
and also includes $u(T)$ when $T<\tau^*$. Monotone convergence proves
\eqref{prelife:energy}. The order of these limits uses only local
approximation and the finite-activity estimate.
\end{proof}

\begin{proposition}\label{prop:continuation}
The maximal lifetime $\tau^*$ is infinite almost surely.
\end{proposition}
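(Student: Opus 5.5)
By the blow-up alternative \eqref{blowup:alternative}, it suffices to fix a finite $T$ and show that $\|u\|_{Y([0,\tau^*))}<\infty$ almost surely on $\{\tau^*\le T\}$. Lemma~\ref{lem:prelife} together with mass conservation gives, almost surely on this event, a finite random bound
\[
 K(\omega):=\sup_{0\le t<\tau^*}\|u(t)\|_{H^1(\R^3)}
 \le\bigl(M_0+2\sup_{t<\tau^*\wedge T}E(u(t))\bigr)^{1/2}.
\]
The plan is pathwise and follows the strategy sketched in the introduction. Fix such an $\omega$ and choose a restart time $s<\tau^*$ close to $\tau^*$. On $[s,b]$ with $b<\tau^*$, write the mild equation restarted at $s$ in the form \eqref{forced:mild}, with
\[
 \eta(t)=\int_{(s,t]}\int_ZS(t-r)G_{r,z}u(r-)\,\widetilde N(\dd r,\dd z)
        +\int_s^tS(t-r)B(r)u(r)\dd r .
\]
Lemma~\ref{lem:stability} is then applied with the deterministic comparison solution $v$ starting from $v(s)=u(s)$, so that $\|v(s)\|_{H^1}\le K$, and with $e=\|\eta\|_{\mathcal X([s,b])}$. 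If $e\le\delta_*(K)$ uniformly in $b<\tau^*$, then $\|u\|_{Y([s,\tau^*))}\le C_0(K)+1$. Adding the finite norm on $[0,s]$ contradicts the blow-up alternative.

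The compensator part of $\eta$ is straightforward. By \eqref{str:lone} and \eqref{B:bound}, it is bounded by $C_VQK\int_s^{\tau^*}a^2$, which tends to zero as $s\uparrow\tau^*$.

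The stochastic part is the main obstacle. Two features make it delicate: $s$ is chosen after fixing $\omega$, and the integrand is controlled only up to the random time $\tau^*$. I would localize with the stopping times
\[
 \kappa_R=\inf\{t:\|u(t)\|_{H^1}\ge R\}\wedge\sigma_k\wedge T,
\]
where $(\sigma_k)$ announces $\tau^*$. Let $\Phi_R=\mathbf1_{(0,\kappa_R]}G_{r,z}u(r-)$. Its square integral is at most $C_VR^2A(T)$ by \eqref{G:bound}, using left limits so that the bound holds up to the stopping jump. The approach is to avoid stopping times that depend on $\omega$ and to work instead with the conjugated martingale
\[
 P_R(t)=\int_0^t\int_ZS(-r)\Phi_R\,\widetilde N ,
\]
which is a square-integrable $H^1$ martingale and hence converges. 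Letting $k\to\infty$ along $\sigma_k$, one obtains an $H^1$-valued martingale $P$ on $[0,\tau^*)$ with a left limit $P(\tau^*-)$ on $\{\kappa_R\ge\tau^*\text{ eventually}\}$; since $K<\infty$, the union over $R$ of these events has full probability.

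The stochastic convolution restarted at $s$ is the sum of two pieces:
\[
 S(t)\bigl(P(t)-P(\tau^*-)\bigr)+S(t)\bigl(P(\tau^*-)-P(s)\bigr).
\]
The first piece is small in $H^1$ uniformly for $t\in[s,\tau^*)$. The second is a free evolution of a datum that is small in $H^1$, so by \eqref{str:free} it is small in $\mathcal X$.

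This decomposition does not control the $Y$ norm of the first piece, because $S(t)(P(t)-P(\tau^*-))$ is not a free evolution. To handle that norm, I would use Lemma~\ref{lem:poisson} on deterministic dyadic restart times $s_\ell$ (or on the stopping intervals $(\sigma_k,\kappa_R]$) with the tail intensity $\int_{s_\ell}^{T}a^2$. Combining Chebyshev's inequality with Borel--Cantelli shows that, almost surely, the restarted convolution $W_{\sigma_k}$ has $\mathcal X([\sigma_k,\kappa_R])$ norm tending to zero along a subsequence of $k$. This works because the expected square, conditionally on $\mathcal F_{\sigma_k}$, is bounded by $C_VR^2\,\E\int_{\sigma_k}^{\kappa_R}a^2\to0$ by dominated convergence, since $\sigma_k\uparrow\tau^*$ and $a^2$ is locally integrable. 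Choosing $s=\sigma_k$ for a suitable $k$ depending on $\omega$ then makes $e<\delta_*(K)$ on $\{\kappa_R\ge\tau^*\}$. Since the $\sigma_k$ are countably many, this selection after fixing $\omega$ is legitimate.

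This yields a finite $Y([0,\tau^*))$ norm almost surely on $\{\tau^*\le T,\ K<\infty\}$. Letting $R\to\infty$ and then $T\to\infty$ gives $\tau^*=\infty$ almost surely.
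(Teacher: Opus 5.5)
Your proof is correct and follows the paper's overall plan: the energy bound from Lemma~\ref{lem:prelife}, a pathwise restart, comparison through Lemma~\ref{lem:stability}, and a contradiction with \eqref{blowup:alternative}. The difference lies in how you make the restarted Poisson convolution small in $Y$.

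The paper uses the moment bound of Lemma~\ref{lem:prelife} to define $P$ once on the deterministic interval $[0,T]$, with no $R$-localization. Lemma~\ref{lem:poisson} then gives $W=S(\cdot)P(\cdot)\in Y([0,T])$ almost surely, and the paper splits
\[
 S(t)\bigl(P(t)-P(s)\bigr)=S(t)P(t)-S(t)P(\tau^*-)+S(t)\bigl(P(\tau^*-)-P(s)\bigr).
\]
The first two terms have small $Y([s,\tau^*))$ norm by absolute continuity of the tenth-power integrals on a shrinking interval. The third is small by \eqref{str:free} and the left limit. So the obstacle you name for $S(t)(P(t)-P(\tau^*-))$ disappears after one more splitting, and the smallness holds for every $s\uparrow\tau^*$ by a purely pathwise argument.

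Your replacement is also valid. You apply the random-interval form of Lemma~\ref{lem:poisson} at the announcing times, obtain $\E\|W_{\sigma_k}\|_{\mathcal X}^2\to0$ by dominated convergence, and pass to an almost surely convergent subsequence. This controls the full $\mathcal X$ norm, both the $H^1$ supremum and $Y$, in one step, so your preliminary $P(\tau^*-)$ decomposition is not needed. Your route avoids the left-limit and absolute-continuity step. The cost is that restarts are restricted to a deterministic subsequence of $(\sigma_k)$, and you must identify the localized convolution with the actual forcing on $\{\sup_{t<\tau^*}\|u(t)\|_{H^1(\R^3)}<R\}$.

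Three small repairs are needed:
\begin{itemize}
\item As written, $\kappa_R$ contains $\wedge\sigma_k$, which makes $(\sigma_k,\kappa_R]$ empty. Take instead the exit time before $\tau^*$, capped at $\tau^*\wedge T$.
\item The deterministic dyadic variant in your parenthesis is not justified. The grid point just below the random $\tau^*$ is not a stopping time, and a union bound over grid points is not small. Only the $\sigma_k$ version carries the argument.
\item The $R$-localization is dispensable, since $A(T)\sup_{t<T\wedge\tau^*}\|u(t)\|_{H^1(\R^3)}^2$ is already an integrable dominating function.
\end{itemize}
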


\begin{proof}
Fix $T<\infty$. Lemma~\ref{lem:prelife} implies
\begin{equation}\label{prelife:Hone}
 \E\sup_{0\le t<T\wedge\tau^*}\|u(t)\|_{H^1(\R^3)}^2<\infty.
\end{equation}
We extend the stochastic integrand to $[0,T]$ while preserving
predictability. On each interval
$(\sigma_k,\sigma_{k+1}]$ in the announcing construction, use the
predictable left-limit integrand of that local solution. Paste these
integrands, include the first interval $(0,\sigma_1]$, and set the
result to zero outside their union. Since $\sigma_k<\tau^*$ on a finite
lifetime, this gives the predictable process
\[
 \widehat G(s,z)=\mathbf1_{\{s<\tau^*\}}G_{s,z}u(s-),
\]
with value zero at $s=\tau^*$. This definition by predictable intervals
also covers the event of infinite lifetime. Define $\widehat B(s)$
in the same way from $B(s)u(s)$; changing to left limits does not alter
its time integral.
For stopping times $\sigma\le\tau$, the process
$\mathbf1_{\{\sigma<s\le\tau\}}$ is predictable: its paths are
left-continuous and it is adapted. The countable pasted sum therefore
preserves predictability. At a construction endpoint the left-limit
value is assigned by the interval ending there; the next interval is
open on the left. At a finite lifetime the increasing union of the
construction intervals excludes the lifetime itself. Thus this
definition does not require a value of the solution at $\tau^*$.

By \eqref{G:bound} and \eqref{prelife:Hone},
\[
 \E\int_0^T\int_Z\|\widehat G(s,z)\|_{H^1(\R^3)}^2\nu(\dd z)\dd s
 \le C_V A(T)\E\sup_{0\le s<T\wedge\tau^*}
                                      \|u(s)\|_{H^1(\R^3)}^2<\infty.
\]
Consequently,
\begin{equation}\label{prelife:martingale}
 P(t)=\int_0^t\int_Z S(-s)\widehat G(s,z)\widetilde N(\dd s,\dd z)
\end{equation}
is a c\`adl\`ag $H^1(\R^3)$-valued martingale on $[0,T]$.
Lemma~\ref{lem:poisson} also gives
\begin{equation}\label{prelife:W}
W(t):=S(t)P(t)\in Y([0,T])\quad\text{almost surely}.
\end{equation}
In particular, $P$ has a left limit in $H^1(\R^3)$ at every positive
time in $[0,T]$, including a possible random lifetime. The version on
the fixed deterministic interval provides this limit simultaneously
at every time. It is the input needed to control the restarted
convolution near the possible lifetime.
Similarly, the process
\[
 C(t)=\int_0^t S(-s)\widehat B(s)\dd s
\]
is continuous with finite variation in $H^1(\R^3)$, since
\begin{equation}\label{prelife:drift}
 \int_0^T\|\widehat B(s)\|_{H^1(\R^3)}\dd s
    \le C_V A(T)\sup_{0\le s<T\wedge\tau^*}
                                    \|u(s)\|_{H^1(\R^3)}<\infty
\end{equation}
almost surely.

Fix a sample path satisfying all these properties and suppose that
$\tau^*\le T$. For $s<t<\tau^*$, the mild equation can be restarted
pathwise as
\begin{equation}\label{restart:prelife}
 u(t)=S(t-s)u(s)-i\int_s^t S(t-r)F(u(r))\dd r+\Xi_s(t),
\end{equation}
where
\[
 \Xi_s(t)=S(t)(P(t)-P(s))+S(t)(C(t)-C(s)).
\]
We claim that
\begin{equation}\label{prelife:tailsmall}
 \|\Xi_s\|_{\mathcal X([s,\tau^*))}\longrightarrow0
                    \quad\text{as }s\uparrow\tau^*.
\end{equation}
For the first term, the existence of $P(\tau^* -)$ gives
\[
 \sup_{s\le t<\tau^*}\|P(t)-P(s)\|_{H^1(\R^3)}\longrightarrow0.
\]
For any $\eps>0$, choose $s_0<\tau^*$ such that
\[
 \|P(r)-P(\tau^*-)\|_{H^1(\R^3)}<\eps
 \qquad(s_0\le r<\tau^*).
\]
The supremum above is then at most $2\eps$
for $s\ge s_0$. Unitarity of $S(t)$ gives the same bound for
$S(t)(P(t)-P(s))$.
For its spacetime norm, add and subtract $S(t)P(\tau^* -)$ to get
\[
 \begin{split}
 \|S(t)(P(t)-P(s))\|_{Y([s,\tau^*))}
 \le{}&\|W\|_{Y([s,\tau^*))}
       +\|S(t)P(\tau^* -)\|_{Y([s,\tau^*))}\\
     &+C\|P(s)-P(\tau^* -)\|_{H^1(\R^3)}.
 \end{split}
\]
The first two terms tend to zero by absolute continuity of their
tenth-power time integrals, using \eqref{prelife:W} and the homogeneous
Strichartz estimate. The last tends to zero by the left limit. For the
finite-variation term, \eqref{str:lone} gives
\[
 \|S(t)(C(t)-C(s))\|_{\mathcal X([s,\tau^*))}
       \le C\int_s^{\tau^*}\|\widehat B(r)\|_{H^1(\R^3)}\dd r
          \longrightarrow0.
\]
This proves \eqref{prelife:tailsmall}. The use of the fixed vector
$P(\tau^*-)$ is essential: its free evolution has a vanishing
spacetime norm on the shrinking interval, while the remaining free
evolution is small by $H^1(\R^3)$ convergence.

Set $K=1+\sup_{0\le t<\tau^*}\|u(t)\|_{H^1(\R^3)}$, which is finite
on the fixed path. Choose $s<\tau^*$ sufficiently close to $\tau^*$ so
that the norm in \eqref{prelife:tailsmall} is at most $\delta_*(K)$.
Let $v_s$ be the unforced solution with $v_s(s)=u(s)$. By
Lemma~\ref{lem:detglobal}, its global critical norm is at most
$C_0(K)$. For each finite $b$ with $s<b<\tau^*$, the stochastic
trajectory already belongs to $\mathcal X([s,b])$ by local existence.
The external term $\Xi_s$ vanishes at $s$ and has norm at most
$\delta_*(K)$ on this interval, because restriction cannot increase
the norm. All the hypotheses of Lemma~\ref{lem:stability} thus hold
on $[s,b]$, with the same constants for every such $b$. It gives
\begin{equation}\label{continuation:uniform-critical}
 \|u\|_{Y([s,b])}
 \le C_0(K)+C_*(K)\|\Xi_s\|_{\mathcal X([s,\tau^*))}
 \le C_0(K)+C_*(K)\delta_*(K).
\end{equation}
Increase $b$ to $\tau^*$ through a sequence and use monotone
convergence of the tenth-power spacetime integrals. The bound
\eqref{continuation:uniform-critical} proves
$\|u\|_{Y([s,\tau^*))}<\infty$. The norm on $[0,s]$ is finite by
local existence. Hence the full critical norm before $\tau^*$ is
finite, contradicting \eqref{blowup:alternative}. The comparison is
applied only on compact intervals inside the known lifetime, with
the endpoint reached through the uniform bound.

The time $s$ is chosen after fixing the sample path and is used only
in this deterministic comparison. The stochastic integral was defined
on $[0,T]$ in \eqref{prelife:martingale}. We have proved
$\PP(\tau^*\le T)=0$. Increasing integer $T$ proves the proposition.
\end{proof}

\begin{proof}[Completion of Theorem~\ref{thm:global}]
Global existence and pathwise uniqueness follow from
Propositions~\ref{prop:local} and \ref{prop:continuation}. The mass
identity and \eqref{energy:bound:intro} follow from
Lemma~\ref{lem:prelife}, first on $[0,T)$ and then on $[0,T]$ by
using a larger deterministic time. The solution is constructed as an
adapted fixed-point limit and subsequent concatenation on the original
stochastic basis, so it is probabilistically strong.

For convergence on $[0,T]$, use the same fixed-$T$ inner endpoints
$\theta_j\le T$ from \eqref{local:innerstop}. Since $\tau^*=\infty$
almost surely, the finite-interval exhaustion in
Lemma~\ref{lem:localapprox} shows that $\theta_j=T$ for all
sufficiently large $j$ on each path. For every fixed integer $J$,
\eqref{local:stoppedapprox} gives
\begin{equation}\label{global:fixed-restart-error}
 \|u_n-u\|_{\mathcal X([0,\theta_J])}
       \longrightarrow0\quad\text{in probability}.
\end{equation}
Although the number of restarts needed to reach $T$ is random, the
local approximation is used with a fixed $J$ before this index tends
to infinity.

For $\eps>0$, the event that the error on $[0,T]$ exceeds $\eps$
is contained in the union of $\{\theta_J<T\}$ and the event that
the error in \eqref{global:fixed-restart-error} exceeds $\eps$.
Consequently,
\begin{align}
 &\limsup_{n\to\infty}
 \PP\bigl(\|u_n-u\|_{\mathcal X([0,T])}>\eps\bigr)\notag\\
 &\qquad\le\PP(\theta_J<T).
 \label{global:probability-exhaustion}
\end{align}
Let $J\to\infty$. The right-hand side tends to zero, which proves
\eqref{approx:intro}. The event $\{\theta_J<T\}$ enters only this
probability comparison, after the local stochastic estimates have
been established.

If deterministic initial data $u_{0,n}$ converge to $u_0$ in
$H^1(\R^3)$ and the mark measure is unchanged, the local difference
estimate has an initial term
$C\|u_{0,n}-u_0\|_{H^1(\R^3)}$ and no missing-mark term.
At later restarts the data converge in probability, and the projection
argument of Lemma~\ref{lem:localapprox} gives the required local
comparison. Thus \eqref{global:fixed-restart-error} holds for these
solutions as well. Applying \eqref{global:probability-exhaustion}
proves the stated continuous dependence in probability.
\end{proof}

\begin{proof}[Proof of Proposition~\ref{prop:balance}]
We first construct the limiting integrals from the established energy
bound, and then pass to the limit in the finite-activity identity. By
\eqref{Lambda:bound}, mass conservation and
\eqref{energy:bound:intro},
\begin{equation}\label{balance:limiting-integrability}
 \E\int_0^T\int_Z|\Lambda_{s,z}(u(s-))|^2\nu(\dd z)\dd s
 \le2b_1^2M_0A(T)\E\sup_{0\le s\le T}E(u(s))<\infty.
\end{equation}
Thus its compensated integral is a square-integrable real martingale.
Also, the compensation formula and \eqref{D:bound} give
\begin{equation}\label{balance:positive-integrability}
 \E\int_0^T\int_ZD_{s,z}(u(s-))N(\dd s,\dd z)
 \le\frac12b_1^2M_0A(T)<\infty.
\end{equation}
The latter integral has nonnegative, finite-variation c\`adl\`ag
paths. Denote these two processes by $\mathcal L(t)$ and
$\mathcal D(t)$, respectively. Denote their finite-activity versions
with $u_n$ and $Z_n$ by $\mathcal L_n(t)$ and $\mathcal D_n(t)$.

The following difference estimates specify the integrability needed
to pass to the limit:
\begin{equation}\label{Lambda:diff}
 \begin{split}
 |\Lambda_{t,z}(f)-\Lambda_{t,z}(g)|
 &\le b_1|a(t)||z|
       (\|f\|_{H^1(\R^3)}+\|g\|_{H^1(\R^3)})
                                \|f-g\|_{H^1(\R^3)},\\
 |D_{t,z}(f)-D_{t,z}(g)|
 &\le \tfrac12 b_1^2a(t)^2|z|^2
       (\|f\|_{L^2(\R^3)}+\|g\|_{L^2(\R^3)})
                                \|f-g\|_{L^2(\R^3)}.
 \end{split}
\end{equation}
The first inequality follows by writing the current difference as
\[
 \overline f\nabla f-\overline g\nabla g
       =\overline{(f-g)}\nabla f+\overline g\nabla(f-g).
\]
Indeed, the $L^1(\R^3)$ norm of this expression is at most
\[
 \|f-g\|_{L^2(\R^3)}\|\nabla f\|_{L^2(\R^3)}
       +\|g\|_{L^2(\R^3)}\|\nabla(f-g)\|_{L^2(\R^3)}.
\]
Multiplying by $|a(t)|\|\nabla A_z\|_{L^\infty(\R^3)}$ gives
the first inequality. For the second, use
\[
 \begin{split}
 \bigl|\|f\nabla A_z\|_{L^2(\R^3)}^2
       -\|g\nabla A_z\|_{L^2(\R^3)}^2\bigr|\\
 \le\|\nabla A_z\|_{L^\infty(\R^3)}^2
       (\|f\|_{L^2(\R^3)}+\|g\|_{L^2(\R^3)})
                          \|f-g\|_{L^2(\R^3)}.
 \end{split}
\]

Fix $T<\infty$ and $K>\|u_0\|_{H^1(\R^3)}$. Let
\begin{equation}\label{balance:common-stop}
 \rho_{K,n}=\inf\{t\ge0:
       \|u_n(t)\|_{H^1(\R^3)}\vee\|u(t)\|_{H^1(\R^3)}\ge K\}.
\end{equation}
For $0<s\le\rho_{K,n}$, the two left-limit norms are at most $K$.
The values at the exit itself need not satisfy that bound. Define
\[
 d_{K,n}=\sup_{0<s\le T\wedge\rho_{K,n}}
       \|u_n(s-)-u(s-)\|_{H^1(\R^3)}.
\]
Then $d_{K,n}\le2K$ and
$d_{K,n}\le\sup_{0\le s\le T}\|u_n(s)-u(s)\|_{H^1(\R^3)}$.
Theorem~\ref{thm:global} therefore gives convergence of $d_{K,n}$
to zero in probability. Its boundedness upgrades this to
\begin{equation}\label{balance:bounded-difference}
 \E d_{K,n}^2\longrightarrow0,\qquad
 \E d_{K,n}\longrightarrow0.
\end{equation}
Indeed, for any $h>0$,
$\E d_{K,n}^2\le h^2+4K^2\PP(d_{K,n}>h)$; first let
$n\to\infty$ and then $h\downarrow0$.

The stopped martingale difference has integrand
\begin{equation}\label{balance:martingale-split}
 \begin{split}
 &\mathbf1_{Z_n}(z)\Lambda_{s,z}(u_n(s-))
                          -\Lambda_{s,z}(u(s-))\\
 &\quad=\mathbf1_{Z_n}(z)
       \bigl(\Lambda_{s,z}(u_n(s-))-\Lambda_{s,z}(u(s-))\bigr)
       -\mathbf1_{Z\setminus Z_n}(z)\Lambda_{s,z}(u(s-)).
 \end{split}
\end{equation}
The whole expression is multiplied by the predictable indicator
$\mathbf1_{\{s\le\rho_{K,n}\}}$. By \eqref{Lambda:diff},
\begin{align}
 &\E\int_0^{T\wedge\rho_{K,n}}\int_{Z_n}
    |\Lambda_{s,z}(u_n(s-))-\Lambda_{s,z}(u(s-))|^2
                                           \nu(\dd z)\dd s\notag\\
 &\qquad\le4b_1^2K^2A(T)\E d_{K,n}^2.\label{balance:Lambda-Ltwo}
\end{align}
For the omitted marks, mass conservation and the pre-jump gradient
bound give
\begin{align}
 &\E\int_0^{T\wedge\rho_{K,n}}\int_{Z\setminus Z_n}
               |\Lambda_{s,z}(u(s-))|^2\nu(\dd z)\dd s\notag\\
 &\qquad\le b_1^2M_0K^2 Q_n^{\mathrm{tail}}
                                    \int_0^T a(s)^2\dd s.
 \label{balance:Lambda-missing}
\end{align}
Doob's inequality and the real martingale isometry applied to
\eqref{balance:martingale-split} show that
\begin{align}
 &\E\sup_{0\le t\le T}
   |\mathcal L_n(t\wedge\rho_{K,n})
                   -\mathcal L(t\wedge\rho_{K,n})|^2\notag\\
 &\qquad\le Cb_1^2K^2
       \left(A(T)\E d_{K,n}^2
          +M_0Q_n^{\mathrm{tail}}\int_0^T a(s)^2\dd s\right)
       \longrightarrow0.\label{balance:martingale-convergence}
\end{align}

For the positive Poisson integrals, use total variation of the
difference and then the compensation formula. Since both masses
equal $M_0$, \eqref{Lambda:diff} gives
\begin{align}
 &\E\int_0^{T\wedge\rho_{K,n}}\int_{Z_n}
       |D_{s,z}(u_n(s-))-D_{s,z}(u(s-))|N(\dd s,\dd z)\notag\\
 &\qquad=\E\int_0^{T\wedge\rho_{K,n}}\int_{Z_n}
       |D_{s,z}(u_n(s-))-D_{s,z}(u(s-))|\nu(\dd z)\dd s\notag\\
 &\qquad\le b_1^2\sqrt{M_0}\,A(T)\E d_{K,n}.
 \label{balance:D-Lone}
\end{align}
The analogous missing-mark contribution is at most
\begin{equation}\label{balance:D-missing}
 \frac12b_1^2M_0 Q_n^{\mathrm{tail}}\int_0^T a(s)^2\dd s.
\end{equation}
It follows that
\begin{equation}\label{balance:positive-convergence}
 \E\sup_{0\le t\le T}
   |\mathcal D_n(t\wedge\rho_{K,n})
                    -\mathcal D(t\wedge\rho_{K,n})|
       \longrightarrow0.
\end{equation}
The limit passage preserves the same moment structure as the energy
estimate: \eqref{balance:martingale-convergence} is an $L^2(\Omega)$
bound for the term linear in $z$, and
\eqref{balance:positive-convergence} is an $L^1(\Omega)$ bound for
the term quadratic in $z$. Both include the common stopping jump.
Their predictable integrands involve only pre-jump values, so these
bounds impose no restriction on its overshoot.

To pass to the energy on the left-hand side, we control the full
post-jump paths by the maximal estimates. Put
\[
 H_T=M_0+2C\bigl(E(u_0)+b_1^2M_0A(T)\bigr).
\]
The maximal energy estimates imply
\begin{equation}\label{balance:uniform-tail-probability}
 \PP\left(\sup_{0\le t\le T}
       \bigl(\|u_n(t)\|_{H^1(\R^3)}\vee\|u(t)\|_{H^1(\R^3)}\bigr)
             \ge R\right)
       \le\frac{2H_T}{R^2},
\end{equation}
uniformly in $n$. On the complementary event,
\eqref{energy:local-lipschitz} bounds the uniform energy difference
by $C(R+R^5)\sup_{0\le t\le T}\|u_n(t)-u(t)\|_{H^1(\R^3)}$.
For fixed $R$ this tends to zero in probability, and the exceptional
probability tends to zero as $R\to\infty$. Therefore
\begin{equation}\label{balance:energy-convergence}
 \sup_{0\le t\le T}|E(u_n(t))-E(u(t))|
                  \longrightarrow0\quad\text{in probability}.
\end{equation}
The same is then true after composition of both paths with their
common stop, since taking that stop cannot enlarge the supremum.

Define the c\`adl\`ag residual
\[
 \mathcal R(t)=E(u(t))-E(u_0)-\mathcal L(t)-\mathcal D(t).
\]
The stopped finite-activity identity \eqref{finite:balance} and
\eqref{balance:martingale-convergence},
\eqref{balance:positive-convergence}, and
\eqref{balance:energy-convergence} imply, for every fixed $K$,
\[
 \sup_{0\le t\le T}|\mathcal R(t\wedge\rho_{K,n})|
                        \longrightarrow0\quad\text{in probability}.
\]
For fixed $K$, this conclusion concerns the residual stopped at the
same time on both sides and requires no convergence of the stopping
times. On the event that both entire paths have $H^1(\R^3)$ norm less
than $K$ up to $T$, the stopped and unstopped residuals agree. By
\eqref{balance:uniform-tail-probability}, for every $\eps>0$,
\[
 \PP\left(\sup_{0\le t\le T}|\mathcal R(t)|>\eps\right)
 \le\frac{2H_T}{K^2}
    +\PP\left(\sup_{0\le t\le T}
          |\mathcal R(t\wedge\rho_{K,n})|>\eps\right).
\]
First let $n\to\infty$ with $K$ fixed, and then let $K\to\infty$.
The residual vanishes identically on $[0,T]$ almost surely. Intersecting
these events over positive integer $T$ proves
\eqref{energy:balance:intro} indistinguishably. The required
integrability of both terms was established in
\eqref{balance:limiting-integrability} and
\eqref{balance:positive-integrability} before taking these limits.

Taking expectations in the resulting identity yields
\begin{equation}\label{energy:expectation}
 \E E(u(t))=E(u_0)+\E\int_0^t\int_Z
                            D_{s,z}(u(s))\nu(\dd z)\dd s.
\end{equation}
The value $u(s)$ can replace $u(s-)$ in this time integral, since a
c\`adl\`ag path has at most countably many discontinuities.
\end{proof}

\section{Scattering with finite total noise intensity}\label{sec:scattering}

Throughout this section, Assumption~\ref{ass:model} holds and
$a\in L^2(0,\infty)$. We prove Theorem~\ref{thm:scattering} for the
global solution supplied by Theorem~\ref{thm:global}. Write
\[
 A(\infty)=Q\int_0^\infty a(s)^2\dd s<\infty.
\]
Mass conservation and \eqref{energy:bound:intro}, followed by monotone
convergence in the terminal time, give
\begin{equation}\label{scat:uniform-energy}
 \E\sup_{t\ge0}\|u(t)\|_{H^1(\R^3)}^2
 \le M(u_0)+2C\bigl(E(u_0)+b_1^2M(u_0)A(\infty)\bigr)
 =:K_*<\infty.
\end{equation}
Thus the full trajectory is bounded in $H^1(\R^3)$ almost surely.
Scattering also requires the forcing restarted at a late time to be
small in the critical spacetime norm. We obtain this pathwise estimate
from convergence of the interaction-picture martingale and a global
bound for its stochastic convolution, then apply deterministic critical
stability.

\begin{lemma}\label{scat:global-convolution}
The process
\begin{equation}\label{scat:martingale}
 \mathcal M(t)=\int_0^t\int_Z
       S(-s)G_{s,z}u(s-)\,\widetilde N(\dd s,\dd z)
\end{equation}
is a square-integrable $H^1(\R^3)$-valued martingale. There is a
limit $\mathcal M_\infty$ in $L^2(\Omega;H^1(\R^3))$ such that
$\mathcal M(t)\to\mathcal M_\infty$ almost surely and in
$L^2(\Omega;H^1(\R^3))$ as $t\to\infty$. Moreover,
\begin{equation}\label{scat:martingale-estimate}
 \E\sup_{t\ge0}\|\mathcal M(t)\|_{H^1(\R^3)}^2
 +\E\|S(\cdot)\mathcal M(\cdot)\|_{Y(0,\infty)}^2
 \le C_VK_*A(\infty).
\end{equation}
The compensator satisfies
\begin{equation}\label{scat:drift-integrable}
 \E\left(\int_0^\infty
       \|B(s)u(s)\|_{H^1(\R^3)}\dd s\right)^2
 \le C_VK_*A(\infty)^2.
\end{equation}
Consequently,
\begin{equation}\label{scat:drift-limit}
 \mathcal D_\infty
 :=\int_0^\infty S(-s)B(s)u(s)\dd s
\end{equation}
is an almost surely absolutely convergent Bochner integral in
$H^1(\R^3)$ and belongs to $L^2(\Omega;H^1(\R^3))$.
\end{lemma}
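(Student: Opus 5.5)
The plan is to combine the uniform energy bound \eqref{scat:uniform-energy} with the isometry for Hilbert-space Poisson integrals and with Lemma~\ref{lem:poisson} on growing deterministic intervals.

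\emph{Integrability.} By \eqref{G:bound} and unitarity of $S(-s)$,
\[
 \E\int_0^\infty\int_Z\|S(-s)G_{s,z}u(s-)\|_{H^1(\R^3)}^2\nu(\dd z)\dd s
 \le C_VQ\,\E\int_0^\infty a(s)^2\|u(s-)\|_{H^1(\R^3)}^2\dd s
 \le C_VA(\infty)K_*.
\]
Here we bound $\|u(s-)\|_{H^1}^2$ by $\sup_{t\ge0}\|u(t)\|_{H^1}^2$ pathwise and then use \eqref{scat:uniform-energy}. The integrand is predictable, since it is built from left limits of an adapted c\`adl\`ag process. Hence $\mathcal M$ is a square-integrable $H^1$-valued martingale on $[0,\infty)$. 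Its quadratic variation has expectation bounded uniformly in $t$, so it is bounded in $L^2(\Omega;H^1)$.

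\emph{Convergence.} The martingale convergence theorem for $L^2$-bounded Hilbert-space martingales then gives the limit $\mathcal M_\infty$ both almost surely and in $L^2(\Omega;H^1)$. Alternatively, the isometry shows that $\E\|\mathcal M(t)-\mathcal M(s)\|^2$ equals the tail of the integrand integral, which tends to zero. Doob's maximal inequality upgrades this to $\E\sup_{t\ge s}\|\mathcal M(t)-\mathcal M(s)\|^2\to0$, and hence to almost sure Cauchy convergence.

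\emph{Estimate \eqref{scat:martingale-estimate}.} The first term is bounded by Doob's inequality on $[0,n]$ followed by monotone convergence in $n$. For the spacetime term, observe that $S(t)\mathcal M(t)$ is exactly the convolution $W$ of Lemma~\ref{lem:poisson} with $\Phi(r,z)=G_{r,z}u(r-)$. On $[0,n]$, \eqref{poisson:bound} gives
\[
 \E\|S(\cdot)\mathcal M(\cdot)\|_{Y(0,n)}^2\le C\,C_VK_*A(\infty),
\]
with a constant independent of $n$. Letting $n\to\infty$ and using monotone convergence of the tenth-power integral gives the bound on $Y(0,\infty)$. We also need that the versions on nested intervals agree, which holds by uniqueness of the stochastic integral up to indistinguishability.

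\emph{Compensator.} By \eqref{B:bound},
\[
 \int_0^\infty\|B(s)u(s)\|_{H^1}\dd s\le C_VQ\int_0^\infty a^2\dd s\,\sup_t\|u(t)\|_{H^1}=C_VA(\infty)\sup_t\|u(t)\|_{H^1}.
\]
Squaring this and taking expectations with \eqref{scat:uniform-energy} gives \eqref{scat:drift-integrable}. Absolute convergence of the Bochner integral then follows because the norm integral is almost surely finite. Strong measurability of $s\mapsto S(-s)B(s)u(s)$ follows from the measurability of $B(s)f$ established in Lemma~\ref{lem:maps} together with strong continuity of the group. Finally, $\|\mathcal D_\infty\|_{H^1}$ is dominated by the norm integral, so $\mathcal D_\infty\in L^2(\Omega;H^1)$.

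The only delicate point is the $Y(0,\infty)$ bound, namely that Lemma~\ref{lem:poisson} has constants independent of the interval length. That lemma explicitly provides this uniformity, so the remaining work is the routine monotone limit described above.
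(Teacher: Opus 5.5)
Your proposal is correct and follows essentially the same route as the paper. Both arguments bound the expected quadratic variation by \eqref{G:bound}, unitarity and \eqref{scat:uniform-energy}, then obtain $\mathcal M_\infty$ from Hilbert-space martingale convergence and Doob's inequality. Both apply Lemma~\ref{lem:poisson} on $[0,R]$ with $R$-independent constants and pass to $Y(0,\infty)$ by monotone convergence, and both square the pathwise bound from \eqref{B:bound} against $K_*$ for the compensator. Your added remarks on predictability, agreement of versions on nested intervals, and strong measurability are details the paper leaves implicit.
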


\begin{proof}
The multiplier bound \eqref{G:bound} and the unitarity of $S(-s)$ give
\begin{align}
 &\E\int_0^\infty\int_Z
       \|S(-s)G_{s,z}u(s-)\|_{H^1(\R^3)}^2\nu(\dd z)\dd s
       \notag\\
 &\qquad\le C_VQ\int_0^\infty a(s)^2
            \E\|u(s-)\|_{H^1(\R^3)}^2\dd s
       \le C_VK_*A(\infty).\label{scat:quadratic-integrability}
\end{align}
The expected total quadratic variation is finite. Hence the terminal
integrals form a Cauchy family in $L^2(\Omega;H^1(\R^3))$ by the
Poisson isometry, defining the limit $\mathcal M_\infty$. The
Hilbert-space martingale convergence theorem gives almost sure
convergence, and the maximal inequality yields the first term in
\eqref{scat:martingale-estimate}. Applied to the increments after $T$,
the same inequality gives
\begin{equation}\label{scat:martingale-tail-mean}
 \E\sup_{t\ge T}
   \|\mathcal M(t)-\mathcal M(T)\|_{H^1(\R^3)}^2
 \le C_VK_*Q\int_T^\infty a(s)^2\dd s.
\end{equation}
The estimate follows first on $[T,R]$ and then on the half-line by
monotone convergence as $R\to\infty$.

For the spacetime term, observe that
\[
 S(t)\mathcal M(t)=\int_0^t\int_Z
      S(t-s)G_{s,z}u(s-)\,\widetilde N(\dd s,\dd z).
\]
Lemma~\ref{lem:poisson}, on any finite interval $[0,R]$, implies
\begin{align*}
 \E\|S(\cdot)\mathcal M(\cdot)\|_{Y(0,R)}^2
 &\le C\E\int_0^R\int_Z
          \|G_{s,z}u(s-)\|_{H^1(\R^3)}^2\nu(\dd z)\dd s\\
 &\le C_VK_*A(\infty).
\end{align*}
The constant does not depend on $R$. Since the tenth power of the
$Y(0,R)$ norm increases to the corresponding integral on $(0,\infty)$,
monotone convergence proves the remaining part of
\eqref{scat:martingale-estimate}.

For the deterministic compensator, \eqref{B:bound} reads
\[
 \|B(s)f\|_{H^1(\R^3)}
 \le C_VQa(s)^2\|f\|_{H^1(\R^3)}.
\]
Thus, almost surely,
\[
 \int_0^\infty\|B(s)u(s)\|_{H^1(\R^3)}\dd s
 \le C_VA(\infty)\sup_{s\ge0}\|u(s)\|_{H^1(\R^3)}.
\]
Squaring and applying \eqref{scat:uniform-energy} proves
\eqref{scat:drift-integrable}. Unitarity of $S(-s)$ gives absolute
convergence and the asserted integrability of
\eqref{scat:drift-limit}.
\end{proof}

The next lemma combines martingale convergence with the global
spacetime estimate. Its conclusion holds for every late starting
time on a fixed path, which will allow the comparison time to depend
on the trajectory.

\begin{lemma}\label{scat:anchored-tail}
Let $m:[0,\infty)\to H^1(\R^3)$ be a c\`adl\`ag function such that
$m(t)\to m_\infty$ in $H^1(\R^3)$ and
$S(\cdot)m(\cdot)\in Y(0,\infty)$. For $t\ge T$, define
\[
 h_T(t)=S(t)\bigl(m(t)-m(T)\bigr).
\]
Then
\begin{equation}\label{scat:anchored-tail-limit}
 \lim_{T\to\infty}\|h_T\|_{\mathcal X([T,\infty))}=0.
\end{equation}
\end{lemma}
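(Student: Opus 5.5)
The plan is to split the $\mathcal X([T,\infty))$ norm of $h_T$ into its $H^1$ supremum and its $Y$ part, and to handle each using only the two hypotheses: convergence of $m$ and the global bound on $S(\cdot)m(\cdot)$. This mirrors the argument used near a finite lifetime in Proposition~\ref{prop:continuation}, with $P(\tau^*-)$ replaced by the terminal value $m_\infty$ and the shrinking interval replaced by the tail $[T,\infty)$.

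For the supremum, unitarity of $S(t)$ gives
\[
 \sup_{t\ge T}\|h_T(t)\|_{H^1(\R^3)}
 =\sup_{t\ge T}\|m(t)-m(T)\|_{H^1(\R^3)}
 \le2\sup_{t\ge T}\|m(t)-m_\infty\|_{H^1(\R^3)}.
\]
The right-hand side tends to zero as $T\to\infty$ because $m(t)\to m_\infty$. Note that $h_T$ is c\`adl\`ag on $[T,\infty)$, since $m$ is c\`adl\`ag and $S$ is strongly continuous, so $h_T$ belongs to the path space $D$.

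For the spacetime part, add and subtract $S(t)m_\infty$:
\[
 \|h_T\|_{Y([T,\infty))}
 \le\|S(\cdot)m(\cdot)\|_{Y([T,\infty))}
 +\|S(\cdot)m_\infty\|_{Y([T,\infty))}
 +\|S(\cdot)(m(T)-m_\infty)\|_{Y([T,\infty))}.
\]
Each term is handled separately.
\begin{enumerate}[label=(\roman*)]
\item The first term is the tail of a finite tenth-power time integral, by the hypothesis $S(\cdot)m(\cdot)\in Y(0,\infty)$. It tends to zero by dominated (absolute) continuity of the integral.
\item The second term is the tail of the free evolution of a fixed $H^1$ vector. This is finite by \eqref{str:free} applied on $[0,\infty)$, so it also tends to zero by absolute continuity.
\item For the third term, write $S(t)(m(T)-m_\infty)=S(t-T)S(T)(m(T)-m_\infty)$ and apply \eqref{str:free} from the starting time $T$. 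This bounds it by $C\|m(T)-m_\infty\|_{H^1(\R^3)}$, which tends to zero.
\end{enumerate}
Summing the four contributions gives \eqref{scat:anchored-tail-limit}.

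I expect no serious obstacle; the argument only needs care in two places. First, the constant in \eqref{str:free} must be uniform in the starting time $T$; Lemma~\ref{lem:strichartz} states this explicitly. Second, one must not try to estimate $S(\cdot)m(T)$ directly on $[T,\infty)$ uniformly in $T$, because the vector changes with $T$. Anchoring at the fixed vector $m_\infty$ is the key step: its free evolution has vanishing tail, while the remaining difference is small in $H^1$.
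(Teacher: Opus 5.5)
Your proposal is correct and follows essentially the same route as the paper's proof: the $H^1$ supremum is bounded by $2\sup_{t\ge T}\|m(t)-m_\infty\|_{H^1(\R^3)}$ via unitarity, and the $Y$ part is split by adding and subtracting $S(t)m_\infty$. The three resulting terms are then handled by absolute continuity of the two tail integrals and by the homogeneous Strichartz bound $C\|m(T)-m_\infty\|_{H^1(\R^3)}$, whose constant is uniform in $T$.
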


\begin{proof}
For the supremum term, unitarity gives
\begin{align*}
 \sup_{t\ge T}\|h_T(t)\|_{H^1(\R^3)}
 &=\sup_{t\ge T}\|m(t)-m(T)\|_{H^1(\R^3)}\\
 &\le2\sup_{t\ge T}\|m(t)-m_\infty\|_{H^1(\R^3)}\longrightarrow0.
\end{align*}
For the spacetime term, write
\[
 h_T(t)=S(t)m(t)-S(t)m_\infty
              +S(t)\bigl(m_\infty-m(T)\bigr).
\]
The homogeneous Strichartz estimate, with a constant independent of $T$,
therefore yields
\begin{align}
 \|h_T\|_{Y(T,\infty)}
 &\le\|S(\cdot)m(\cdot)\|_{Y(T,\infty)}
       +\|S(\cdot)m_\infty\|_{Y(T,\infty)}\notag\\
 &\quad+C\|m_\infty-m(T)\|_{H^1(\R^3)}.\label{scat:anchored-tail-bound}
\end{align}
The first two terms tend to zero by absolute continuity of the
spacetime integrals: the first is finite by assumption, and the second
by homogeneous Strichartz for the fixed vector $m_\infty$. The last
term tends to zero by convergence of $m(T)$. This proves
\eqref{scat:anchored-tail-limit}.
\end{proof}

Define, for $t\ge T$,
\begin{equation}\label{scat:restarted-noise}
 \begin{split}
 \Xi_T(t)={}&S(t)\bigl(\mathcal M(t)-\mathcal M(T)\bigr)
            +\int_T^t S(t-s)B(s)u(s)\dd s\\
 ={}&\int_T^t\int_Z S(t-s)G_{s,z}u(s-)
                 \,\widetilde N(\dd s,\dd z)
            +\int_T^t S(t-s)B(s)u(s)\dd s.
 \end{split}
\end{equation}
The first line defines the restarted path simultaneously for all
$T$; the second is its stochastic-integral representation for
deterministic $T$. A time selected after fixing the path is always
interpreted through the first line. The initial value $u(T)$ includes
the jump at $T$, whereas the restarted integrals exclude that jump.
Equivalently, if $\Psi(t)=S(t)\mathcal M(t)$, the martingale contribution
to $\Xi_T$ is $\Psi(t)-S(t-T)\Psi(T)$.

By Lemmas~\ref{scat:global-convolution} and \ref{scat:anchored-tail},
the martingale contribution tends to zero in
$\mathcal X([T,\infty))$ almost surely. For the compensator,
Minkowski's inequality and homogeneous Strichartz give
\begin{equation}\label{scat:drift-tail}
 \left\|\int_T^t S(t-s)B(s)u(s)\dd s
          \right\|_{\mathcal X([T,\infty))}
 \le C\int_T^\infty\|B(s)u(s)\|_{H^1(\R^3)}\dd s
 \longrightarrow0
\end{equation}
almost surely. For the spacetime estimate, apply homogeneous
Strichartz at each $s\ge T$ to
$\mathbf1_{[s,\infty)}(t)S(t-s)B(s)u(s)$, then integrate in $s$.
The supremum estimate follows from unitarity. Consequently,
\begin{equation}\label{scat:noise-tail}
 \lim_{T\to\infty}\|\Xi_T\|_{\mathcal X([T,\infty))}=0
 \qquad\text{almost surely}.
\end{equation}
The limit holds on one event of full probability for all starting
times, since the two lemmas apply to complete paths.

For deterministic starting times, the same estimates also give a
quantitative mean-square bound. Define
\[
 A_T=Q\int_T^\infty a(s)^2\dd s.
\]
The Poisson convolution on $[T,R]$ starts with value zero at $T$.
Using Lemma~\ref{lem:poisson} and then increasing $R$ gives
\[
 \begin{split}
 &\E\left\|\int_T^t\int_ZS(t-s)G_{s,z}u(s-)
          \widetilde N(\dd s,\dd z)\right\|_{\mathcal X([T,\infty))}^2\\
 &\qquad\le C_VQ\int_T^\infty a(s)^2
       \E\|u(s-)\|_{H^1(\R^3)}^2\dd s
       \le C_VK_*A_T.
 \end{split}
\]
For the compensator, \eqref{str:lone} gives the pathwise bound
\[
 \left\|\int_T^tS(t-s)B(s)u(s)\dd s
                             \right\|_{\mathcal X([T,\infty))}
 \le C_VA_T\sup_{s\ge T}\|u(s)\|_{H^1(\R^3)}.
\]
Squaring, taking expectations, and combining the two contributions gives
\begin{equation}\label{scat:noise-tail-quantitative}
 \E\|\Xi_T\|_{\mathcal X([T,\infty))}^2
                 \le C_VK_*(A_T+A_T^2).
\end{equation}
Thus the forcing is small in mean square at deterministic late times.
The scattering proof uses the pathwise limit
\eqref{scat:noise-tail}, since the stability threshold depends on the
$H^1(\R^3)$ bound of the entire trajectory.

\begin{proof}[Proof of Theorem~\ref{thm:scattering}]
Work on the full-probability event where the solution is global,
conserves mass, has the bounded trajectory supplied by
\eqref{scat:uniform-energy}, and satisfies \eqref{scat:noise-tail}.
Fix a sample point in this event and set
\[
 K=1+\sup_{t\ge0}\|u(t)\|_{H^1(\R^3)}<\infty.
\]
Lemma~\ref{lem:detglobal} bounds the global $\mathcal X$ norm of
every solution of \eqref{det:eq} starting from data of
$H^1(\R^3)$ norm at most $K$. Together with
Lemma~\ref{lem:stability}, this gives a positive perturbation threshold
$\delta_*(K)$ and comparison bounds depending only on $K$, uniformly
in the starting and terminal times.

By \eqref{scat:noise-tail}, choose a finite $T$ for which
\[
 \|\Xi_T\|_{\mathcal X([T,\infty))}<\delta_*(K).
\]
The time $T$ is used only in the pathwise deterministic comparison;
the first line of \eqref{scat:restarted-noise} defines the external
term for this choice. Let $v_T$ solve
\eqref{det:eq} with $v_T(T)=u(T)$. For each finite $R>T$, the
restarted mild identity is
\begin{equation}\label{scat:comparison-equation}
 u(t)=S(t-T)u(T)-i\int_T^t S(t-s)F(u(s))\dd s+\Xi_T(t),
 \qquad T\le t\le R.
\end{equation}
The initial data for $u$ and $v_T$ agree, and restricting $\Xi_T$ to
$[T,R]$ cannot increase its norm. Applying deterministic critical
stability on this finite interval gives
\begin{equation}\label{scat:finite-terminal-bound}
 \|u\|_{\mathcal X([T,R])}\le C(K),\qquad R>T,
\end{equation}
where the right-hand side is independent of $R$.

The c\`adl\`ag regularity of $\Xi_T$ is sufficient for stability:
subtracting it from \eqref{scat:comparison-equation} leaves the
continuous Duhamel expression, and the comparison estimates use only
the $\mathcal X$ norm of the external term. In the notation of
Lemma~\ref{lem:stability}, the number of comparison
intervals is bounded by $J_K$, and the endpoint errors obey
$e_{j+1}+h\le(C_1+1)(e_j+h)$ with
$h=\|\Xi_T\|_{\mathcal X([T,\infty))}$ and $e_0=0$.
The choice of $\delta_*(K)$ ensures the smallness hypothesis at every
step, including a last interval truncated at $R$. Hence
\eqref{scat:finite-terminal-bound} is uniform in $R$.

Letting $R\to\infty$ in \eqref{scat:finite-terminal-bound} gives
$u\in\mathcal X([T,\infty))$. Theorem~\ref{thm:global} supplies the
finite $Y$ norm before $T$. We have proved
\begin{equation}\label{scat:whole-critical}
 \|u\|_{Y(0,\infty)}<\infty
 \qquad\text{almost surely}.
\end{equation}

We next construct the scattering state. By Sobolev embedding and the
quintic estimate from Section~\ref{sec:analytic}, for any interval $I$,
\begin{equation}\label{scat:nonlinearity-tail}
 \|F(u)\|_{\mathcal N(I)}
 \le C\|u\|_{L^{10}(I\times\R^3)}^4\|u\|_{Y(I)}
 \le C\|u\|_{Y(I)}^5.
\end{equation}
For the differentiated term, use
$|\nabla F(u)|\le5|u|^4|\nabla u|$ and apply H\"older's inequality
with the four undifferentiated factors in $L^{10}(I\times\R^3)$
and the gradient in $L^{10}(I;L^{30/13}(\R^3))$.
The undifferentiated term is estimated with $u$ in place of
$\nabla u$ in the last factor.
It follows from \eqref{scat:whole-critical} that
$\|F(u)\|_{\mathcal N(T,\infty)}\to0$. Dual Strichartz gives
\begin{equation}\label{scat:nonlinear-cauchy}
 \left\|\int_r^s S(-\tau)F(u(\tau))\dd\tau
                \right\|_{H^1(\R^3)}
 \le C\|F(u)\|_{\mathcal N(r,s)},\qquad 0\le r<s.
\end{equation}
The dual Strichartz construction therefore gives convergence of
this integral in $H^1(\R^3)$ as its upper endpoint tends to infinity.
This uses the vanishing $\mathcal N$ norm of the tails and requires
no absolute $H^1(\R^3)$ integrability of $F(u)$.

Define
\begin{equation}\label{scat:state}
 u_+=u_0-i\int_0^\infty S(-s)F(u(s))\dd s
                     +\mathcal M_\infty+\mathcal D_\infty.
\end{equation}
Multiplying \eqref{main:mild} by $S(-t)$, and using
\eqref{scat:nonlinear-cauchy} and Lemma~\ref{scat:global-convolution},
gives the explicit difference
\[
 \begin{split}
 S(-t)u(t)-u_+
   ={}&i\int_t^\infty S(-s)F(u(s))\dd s
                  +\mathcal M(t)-\mathcal M_\infty\\
     &-\int_t^\infty S(-s)B(s)u(s)\dd s.
 \end{split}
\]
Consequently,
\begin{equation}\label{scat:three-tails}
 \begin{split}
 &\sup_{t\ge T}\|u(t)-S(t)u_+\|_{H^1(\R^3)}\\
 &\quad\le C\|u\|_{Y(T,\infty)}^5
      +\sup_{t\ge T}\|\mathcal M(t)-\mathcal M_\infty\|_{H^1(\R^3)}
      +\int_T^\infty\|B(s)u(s)\|_{H^1(\R^3)}\dd s.
 \end{split}
\end{equation}
The three tails vanish by the finite critical norm, convergence of
the $H^1(\R^3)$ martingale, and absolute integrability of the
compensator, respectively. Thus scattering holds uniformly over all
times after $T$ as $T\to\infty$.
The state is measurable, for it is the almost sure $H^1(\R^3)$ limit of
$S(-n)u(n)$, $n\in\mathbb N$; it can be set equal to zero on the
exceptional null set. It is unique by unitarity of $S(t)$. Moreover,
\begin{equation}\label{scat:state-norm}
 \|u_+\|_{H^1(\R^3)}
 =\lim_{t\to\infty}\|u(t)\|_{H^1(\R^3)}
 \le\sup_{t\ge0}\|u(t)\|_{H^1(\R^3)}.
\end{equation}
Hence $u_+\in L^2(\Omega;H^1(\R^3))$ by
\eqref{scat:uniform-energy}. Passing to the limit in the conserved
$L^2$ norm also gives $M(u_+)=M(u_0)$ almost surely.

To obtain the mean-square assertion, observe that almost sure
scattering implies
\[
 \sup_{t\ge T}\|u(t)-S(t)u_+\|_{H^1(\R^3)}^2\longrightarrow0
 \quad\text{almost surely}.
\]
By \eqref{scat:state-norm}, this random variable is bounded by
\begin{equation}\label{scat:mean-domination}
 4\sup_{t\ge0}\|u(t)\|_{H^1(\R^3)}^2
 \le4M(u_0)+8\sup_{t\ge0}E(u(t)).
\end{equation}
This bound is integrable. The supremum is measurable since the
error has c\`adl\`ag $H^1(\R^3)$ paths and can be computed
over a countable dense set of times. Dominated convergence proves
\eqref{scatter:mean}.

It remains to identify the expected kinetic energy at infinity. For
every $f\in H^1(\R^3)$,
\begin{equation}\label{scat:free-six-decay}
 \lim_{t\to\infty}\|S(t)f\|_{L^6(\R^3)}=0.
\end{equation}
Indeed, choose $f_n\in C_c^\infty(\R^3)$ converging to $f$ in
$H^1(\R^3)$. Sobolev embedding and unitarity imply
\[
 \|S(t)(f-f_n)\|_{L^6(\R^3)}
 \le C\|\nabla(f-f_n)\|_{L^2(\R^3)},
\]
uniformly in $t$, while the dispersive estimate yields
\[
 \|S(t)f_n\|_{L^6(\R^3)}
 \le C|t|^{-1}\|f_n\|_{L^{6/5}(\R^3)}\longrightarrow0.
\]
First let $t\to\infty$ and then $n\to\infty$. Applying
\eqref{scat:free-six-decay} to $u_+$ pathwise, and using
$H^1(\R^3)$ scattering, gives
\[
 \|u(t)\|_{L^6(\R^3)}\longrightarrow0,
 \qquad
 \|\nabla u(t)\|_{L^2(\R^3)}
       \longrightarrow\|\nabla u_+\|_{L^2(\R^3)}
 \quad\text{almost surely}.
\]
Consequently,
\begin{equation}\label{scat:energy-limit}
 E(u(t))\longrightarrow\frac12\|\nabla u_+\|_{L^2(\R^3)}^2
 \quad\text{almost surely and in }L^1(\Omega).
\end{equation}
The $L^1(\Omega)$ convergence follows by domination with the
integrable random variable $\sup_{s\ge0}E(u(s))$. Thus this passage
also uses only the established energy bound, without requiring a
sixth moment of $\|u_+\|_{H^1(\R^3)}$.

Taking expectations in Proposition~\ref{prop:balance} gives, for
each finite $t$,
\begin{equation}\label{scat:finite-mean-energy}
 \E E(u(t))=E(u_0)+\frac12\E\int_0^t a(s)^2
       \int_Z\|u(s)\nabla A_z\|_{L^2(\R^3)}^2\nu(\dd z)\dd s.
\end{equation}
Here the martingale has zero expectation, and the nonnegative jump
integral is evaluated by compensation. Replacing $u(s-)$ by $u(s)$
under $\dd s$ leaves the integral unchanged. Its total expectation
is bounded by
\begin{align}
 &\E\int_0^\infty a(s)^2
       \int_Z\|u(s)\nabla A_z\|_{L^2(\R^3)}^2\nu(\dd z)\dd s
       \notag\\
 &\qquad\le b_1^2M(u_0)Q\int_0^\infty a(s)^2\dd s
       =b_1^2M(u_0)A(\infty).\label{scat:energy-injection-bound}
\end{align}
Finally, apply the $L^1(\Omega)$ convergence in
\eqref{scat:energy-limit} to the left-hand side of
\eqref{scat:finite-mean-energy}, and monotone convergence to its
nonnegative right-hand integral. This proves \eqref{scatter:energy}.
\end{proof}

\section*{Funding}
This work was partially supported by the National Key Research and
Development Program of China (Grant No.~2023YFC2206100), the Fundamental
Research Funds for the Central Universities (Grant No.~2026BRSXB003),
the National Natural Science Foundation of China (Grant No.~12231008),
and the Postdoctoral Fellowship Program of China Postdoctoral Science
Foundation (Grant No.~GZC20261670).

\section*{Declaration of competing interest}
The authors declared that they have no conflicts of interest to this work.

\section*{Declaration of artificial intelligence use}
During the preparation of this manuscript, the authors used ChatGPT
as an auxiliary tool for language editing, organization of arguments,
proof auditing, and manuscript preparation, including preliminary checks
of mathematical derivations. All mathematical statements, proofs,
references, and conclusions appearing in the manuscript were independently
verified by the authors, who take full responsibility for the content.

\end{document}